\theoremstyle{plain}
\newtheorem{Theorem}{Theorem}[section]
\newtheorem{Lemma}[Theorem]{Lemma}
\newtheorem{Corollary}[Theorem]{Corollary}
\newtheorem{Proposition}[Theorem]{Proposition}
\newtheorem{Remark}[Theorem]{Remark}
\newtheorem{Definition}[Theorem]{Definition}
\newtheorem{Example}[Theorem]{Example}
\DeclareMathOperator{\Ann}{Ann}
\DeclareMathOperator{\supp}{supp}
\DeclareMathOperator{\kernel}{ker}
\DeclareMathOperator{\Res}{Res}
\newcommand{\unity}{{1\!\!\!\:\mathrm{l}}}
\newcommand{\Lss}{\mathcal L}
\newcommand{\Mss}{\mathcal M}
\newcommand{\Oss}{\mathcal O}
\newcommand{\Rss}{\mathcal R}
\newcommand{\Sss}{\mathcal S}
\newcommand{\Uss}{\mathcal U}
\newcommand{\C}{\mathbb{C}}
\newcommand{\Z}{\mathbb{Z}}
\renewcommand{\P}{\mathbb{P}^1}
\DeclareMathOperator*{\Pic}{Pic}
\newcommand{\ind}[1]{_{\mbox{\scriptsize{#1}}}}
\DeclareMathAlphabet{\Mat}{U}{eur}{m}{n} 
\DeclareMathAlphabet{\Set}{U}{eur}{m}{n} 
\newcommand{\Sh}[1]{\mathcal{#1}} 
\renewcommand{\qed}{{\bf\hspace{\fill}q.e.d.}}
\begin{document}
\author[S.~Klein]{Sebastian Klein}
\email{s.klein@math.uni-mannheim.de}
\address{Mathematics Chair III\\
Universit\"at Mannheim\\
D-68131 Mannheim, Germany}
\author[E.~L\"ubcke]{Eva L\"ubcke}
\email{eva.luebcke@gmail.com}
\address{Mathematics Chair III\\
Universit\"at Mannheim\\
D-68131 Mannheim, Germany}
\author[M.~Schmidt]{Martin Ulrich Schmidt}
\email{schmidt@math.uni-mannheim.de}
\address{Mathematics Chair III\\
Universit\"at Mannheim\\
D-68131 Mannheim, Germany}
\author[T.~Simon]{Tobias Simon}
\email{tsimon@mail.uni-mannheim.de}
\address{Mathematics Chair III\\
Universit\"at Mannheim\\
D-68131 Mannheim, Germany}
\title{Singular curves and Baker-Akhiezer functions}
\begin{abstract}
We present the concept of Baker-Akhiezer functions on singular complex curves. For this purpose, we translate the algebraic presentation of such curves in~\cite[Chapter~IV]{Se} into the analytic setting. Generalised divisors and their interplay with partial desingularisations are the fundament of the construction of Baker-Akhiezer functions.
\end{abstract}
\maketitle

\section{Introduction}
The main purpose of this article is a presentation of the theory of Baker-Akhiezer functions on singular one-dimensional complex spaces. These functions are defined in such a way that they describe sections of holomorphic line bundles on compact Riemann surfaces. Here, the line bundles are defined in terms of combinations of two concepts which both describe holomorphic line bundles on Riemann surfaces: divisors and cocycles. These concepts are combined in such a way that the holomorphic sections are uniquely determined and solve a differential equation. This relation between holomorphic sections and differential equations was first discovered by Burchnall and Chaundy in a series of papers on commutative algebras of ordinary differential equations. James Baker extracted in a note on one of these papers~\cite{Baker} a first definition of these functions. Krichever realized the importance of this concept and called them Baker-Akhiezer functions~\cite{Krichever}. For a general discussion of these functions, we recommend~\cite[Chapter~2 \S2]{DKN}. Our main objective is to construct Baker-Akhiezer functions on compact one-dimensional complex spaces in the most general setting.

Baker-Akhiezer functions are non-algebraic. Consequently, our main purpose requires the analytic description of compact one-dimensional complex spaces. To our knowledge, the literature on singular curves almost exclusively takes the algebraic point of view. Thus Section~\ref{Se:curves} contains the translation of the elegant treatment of algebraic singular curves by Serre \cite{Se} to the analytic setting. In Sections~\ref{se:riemann roch} and \ref{se:serre duality} the Riemann-Roch Theorem and Serre Duality are proven. The classical versions of these statements concern divisors on compact Riemann surfaces. On singular complex spaces, there exist several generalisations of the concept of divisors on smooth Riemann surfaces. We try to extend the concept of divisors to the most general situation in which Baker-Akhiezer functions are defined on singular one-dimensional complex spaces. This seems to be the concept of generalised divisors introduced by Hartshorne~\cite{Ha86}, which are defined as coherent subsheaves of the sheaf of meromorphic functions.
We hope to convince the reader that this concept is indeed natural on singular curves. Serre \cite[Chapter IV, \S 2--3]{Se} considers only the sub-class of invertible sheafs, which are locally free coherent sheaves of rank one. We again translate Serre's results to the analytic setting in Sections~\ref{se:riemann roch} and \ref{se:serre duality}.

In contrast to classical divisors, generalised divisors do not determine the underlying singular curves uniquely. For a given generalised divisor $\Sss'$, we discuss the compatible partial desingularisations of the underlying singular curve $X'$. Among these, there is a unique singular curve with lowest $\delta$-invariant, which we call $\Sss'$-halfway normalisation of $X'$. If the generalised divisor is locally free, then the underlying singular curve must be equal to this partial desingularisation.  

We finish this introduction with a short summary of the article. In Section~2, we translate~\cite[Chapter~IV \S1]{Se} into the analytic setting and describe singular one-dimensional complex spaces by their normalisation and some additional data. Generalised divisors are introduced in Section~\ref{se:generalised divisors} and their interplay with partial desingularizations of the corresponding singular curves are investigated in Section~\ref{se:halfway}. Sections~\ref{se:riemann roch} and \ref{se:serre duality} translate \S2 and \S3 of \cite[Chapter~IV]{Se} into the analytic setting. In Section~\ref{se:krichever}, we prepare the introduction of Baker-Akhiezer functions and recall Krichever's presentation of holomorphic line bundles on curves. Finally, in Section~\ref{se:BA} the Baker-Akhiezer functions are constructed.
\section{One-dimensional complex analytic spaces}\label{Se:curves}
\subsection{Structure of a one-dimensional complex analytic space}\label{sec:1}
Let $X'$ be a one-dimensional complex analytic space and $\Oss_{X'}$ the sheaf of the holomorphic functions on $X'$. The sheaf $\mathcal{M}_{X'}$ of meromorphic functions on $X'$ is defined as the sheaf of quotients $\frac{f}{g}$, where $f,g\in\Oss_{X'}$ and $g$ has isolated roots. For every $q\in X'$, we denote by $\Bar{\Oss}_{X',q}$ the integral closure of  $\Oss_{X',q}$ in  $\mathcal{M}_{X',q}$. We omit the subscript $X'$ if it is clear from the context on which space the sheaf is defined. This is the first step in constructing the normalisation $\pi:X\to X'$, see e.g. \cite[Theorem~4.4.8]{DJo}. In fact, the direct image $\pi_\ast\Oss_X$ of the sheaf of holomorphic functions $\Oss_X$ of $X$ equals $\Bar{\Oss}_{X'}$ which is the sheaf of locally bounded meromorphic functions on $X'$, see \cite[Theorem~4.4.15 and proof of Theorem 4.4.8]{DJo}:
\begin{align}
\label{eq:decompose-Obar}
\Bar{\Oss}_{X'}&=\pi_\ast\Oss_X\;,&
\Bar{\Oss}_{X',q}&:=\bigoplus_{p\in\pi^{-1}[\{q\}]}\Oss_{X,p} \; . 
\end{align}
The annihilator $\mathfrak{c}$ of $\Bar{\Oss}_{X'}/\Oss_{X'}$ with
\begin{equation}\label{eq:ann}
\mathfrak{c}_q:=\{g\in\Bar{\Oss}_q\,\mid\,\forall\, f\in \Bar{\Oss}_q\,:\,g\cdot f\in\Oss_q\subset \Bar{\Oss}_q\} 
\end{equation}
for all $q\in X'$ is called the conductor of $\Bar{\Oss}_{X'}$ in $\Oss_{X'}$. Here, we identify the meromorphic functions $\mathcal{M}_{X'}$ of $X'$ with the meromorphic functions of $\mathcal{M}_X$ by the map $f\mapsto f\circ\pi=\pi^\ast f$ which induces an isomorphism of sheaves $\mathcal{M}_{X'}=\pi_\ast\mathcal{M}_X$. The conductor \,$\mathfrak{c}$\, is a subsheaf of \,$\Oss_{X'}$\, because of \,$1\in \bar{\Oss}_{X'}$\,. The subvariety defined by this subsheaf of ideals is the set $S'$ of non-normal points in $X'$. Here, $S'$ equals the set of singular points in $X'$ because $X'$ is one-dimensional. With $S:=\pi^{-1}[S']$, the one-sheeted covering $\pi:X\setminus S\to X'\setminus S'$ is biholomorphic. Moreover, let $r_q$ be the radical
\begin{equation}\label{eq:radical}
r_q:=\{f\in \Bar{\Oss}_q\,\mid\,\forall\, p\in \pi^{-1}[\{q\}]\,:\,\pi^\ast  f(p)=0\}.
\end{equation}
\begin{Proposition}\label{prop1}
\begin{itemize}
\item[(a)]
$\delta_q:=\dim(\Bar{\Oss}_q/\Oss_q)<\infty$.
\item[(b)]
For $q\in X'$, there exists $n\in\mathbb{N}$ with
\begin{equation}\label{eq:inclusions}
\C+r_q^n\subset\C+\mathfrak{c}_q\subset\Oss_q\subset\C+r_q\subset\Bar{\Oss}_q.
\end{equation}
\end{itemize}
\end{Proposition}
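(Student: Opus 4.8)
The plan is to read everything through the decomposition $\Bar{\Oss}_q=\bigoplus_{p\in\pi^{-1}[\{q\}]}\Oss_{X,p}$ from~\eqref{eq:decompose-Obar}. Since $X$ is smooth and one-dimensional, each $\Oss_{X,p}$ is a discrete valuation ring (isomorphic to the convergent power series ring $\C\{z\}$) with maximal ideal $\mathfrak{m}_p$, and the index set $\pi^{-1}[\{q\}]$ is finite because $\pi$ is a finite map. Under this identification~\eqref{eq:radical} reads $r_q=\bigoplus_p\mathfrak{m}_p$, and since cross terms between distinct summands of a product ring vanish, $r_q^{\,n}=\bigoplus_p\mathfrak{m}_p^{\,n}$ for every $n$. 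Three of the four inclusions in~\eqref{eq:inclusions} are then immediate: $\C+\mathfrak{c}_q\subset\Oss_q$ since $\C\subset\Oss_q$ and $\mathfrak{c}_q\subset\Oss_q$ (take $f=1$ in~\eqref{eq:ann}, using $1\in\Bar{\Oss}_q$); $\Oss_q\subset\C+r_q$ since for $f\in\Oss_q$ the germ $f-f(q)$ lies in $r_q$ (the residue field of $\Oss_q$ is $\C$, and $\pi^\ast(f-f(q))$ vanishes at each $p$); and $\C+r_q\subset\Bar{\Oss}_q$ trivially. So the whole statement reduces to producing a nonzero element of $\mathfrak{c}_q$ and reaping its consequences.

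For the nonvanishing of the conductor I would invoke the finiteness of the normalisation: since $\pi$ is finite, $\Bar{\Oss}_q$ is a finitely generated $\Oss_q$-module, say with generators $1=b_0,b_1,\dots,b_k$. Writing $b_j=f_j/g_j$ with $f_j,g_j\in\Oss_q$ and $g_j$ having isolated zeros (the defining property of $\mathcal{M}_{X',q}$), the product $g:=g_1\cdots g_k\in\Oss_q$ still has isolated zeros — hence is a nonzerodivisor — and $g\,b_j\in\Oss_q$ for all $j$, so $g\,\Bar{\Oss}_q\subset\Oss_q$, i.e.\ $0\ne g\in\mathfrak{c}_q$.

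Part~(a) is now immediate: $\Bar{\Oss}_q/\Oss_q$ is a finitely generated $\Oss_q$-module killed by $g$, hence a finitely generated module over $\Oss_q/(g)$; and $\Oss_q/(g)$ is a finite-dimensional $\C$-algebra, because $g$ is a nonzerodivisor of the one-dimensional local analytic algebra $\Oss_q$, so $\Oss_q/(g)$ is zero-dimensional, i.e.\ Artinian with nilpotent maximal ideal. Consequently $\Bar{\Oss}_q/\Oss_q$ is finite-dimensional over $\C$, that is $\delta_q<\infty$.

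It remains to find $n\in\N$ with $r_q^{\,n}\subset\mathfrak{c}_q$; then $\C+r_q^{\,n}\subset\C+\mathfrak{c}_q$ and~\eqref{eq:inclusions} follows. As an ideal of the finite product ring $\Bar{\Oss}_q=\bigoplus_p\Oss_{X,p}$, the conductor splits as $\mathfrak{c}_q=\bigoplus_p\mathfrak{b}_p$ with $\mathfrak{b}_p$ an ideal of $\Oss_{X,p}$. The germ of $\pi^\ast g$ at $p$ is a nonzero element of $\mathfrak{b}_p$ — nonzero because $g$ has isolated zeros and $\pi$ is finite — so $\mathfrak{b}_p$ is a nonzero ideal of the discrete valuation ring $\Oss_{X,p}$ and therefore equals $\mathfrak{m}_p^{\,c_p}$ for some $c_p\in\N$. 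With $n:=\max_p c_p$ one obtains $r_q^{\,n}=\bigoplus_p\mathfrak{m}_p^{\,n}\subset\bigoplus_p\mathfrak{m}_p^{\,c_p}=\mathfrak{c}_q$, as desired. The only step that needs genuine input is the nonvanishing of $\mathfrak{c}_q$ above, which rests on the finiteness theorem for the normalisation of a complex analytic space; the rest is bookkeeping with~\eqref{eq:decompose-Obar} and the valuation structure of the rings $\Oss_{X,p}$.
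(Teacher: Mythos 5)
Your proof is correct, but it follows a different route from the paper's. Both arguments ultimately rest on the same external input, namely that \,$\Bar{\Oss}_{X'}$\, is coherent as an \,$\Oss_{X'}$-module (equivalently, that each stalk \,$\Bar{\Oss}_q$\, is a finitely generated \,$\Oss_q$-module); the paper cites this for (a) and you invoke it to clear denominators and produce a nonzerodivisor \,$g\in\mathfrak{c}_q$\,. From there the methods diverge. For (a), the paper argues sheaf-theoretically: \,$\Bar{\Oss}_{X'}/\Oss_{X'}$\, is coherent with zero-dimensional support, and Noether normalisation makes its stalks finite-dimensional; you instead observe that \,$\Bar{\Oss}_q/\Oss_q$\, is a finitely generated module over the Artinian algebra \,$\Oss_q/(g)$\,, which is more elementary and makes the role of the conductor explicit from the start. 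For (b), the paper uses (a) to view \,$\Bar{\Oss}_q/\Oss_q$\, as a finitely generated module over each local ring \,$\Oss_p$\, and applies Krull's intersection theorem to get \,$r_p^{n_p}\subset\mathfrak{c}_q$\,, whereas you exploit that \,$\mathfrak{c}_q$\, is an ideal of the product of discrete valuation rings \,$\bigoplus_p\Oss_{X,p}$\, with nonzero component at every \,$p$\,, hence contains \,$\bigoplus_p\mathfrak{m}_p^{c_p}$\,; note that your (b) is independent of (a) and even identifies \,$\mathfrak{c}_q$\, exactly as \,$\bigoplus_p\mathfrak{m}_p^{c_p}$\,, which is sharper. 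The trade-off is that your argument uses the smoothness of the normalisation (the DVR structure of the \,$\Oss_{X,p}$\,), which is available here since one-dimensional normal spaces are smooth, while the paper's Krull-intersection argument only needs the \,$\Oss_p$\, to be Noetherian local rings and so transfers more directly to more general settings. The trivial inclusions \,$\C+\mathfrak{c}_q\subset\Oss_q\subset\C+r_q\subset\Bar{\Oss}_q$\, are handled the same way (and your justification of \,$\Oss_q\subset\C+r_q$\, via \,$\pi^\ast f(p)=f(q)$\, is the correct one); your identity \,$r_q^n=\bigoplus_p\mathfrak{m}_p^n$\, coincides with the paper's computation.
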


\begin{proof}
(a) Due to \cite[Theorem~6.3.7]{DJo} $\Bar{\Oss}_q$ is a coherent sheaf on $X'$. This implies that $\Bar{\Oss}_q/\Oss_q$ is also a coherent sheaf (\cite[6.2.1]{DJo}). The support of a coherent sheaf $\Sss$ over $X'$ is defined as
\begin{equation}
\label{eq:support-sheaf}
\supp(\Sss):=\{q\in X'\mid\Sss_q\ne0\}.
\end{equation}
In particular, the support of $\Bar{\Oss}_q/\Oss_q$ is equal to $S'$ and has Weierstra{\ss} dimension $0$ (\cite[Definition~4.13]{DJo}).
We claim that the support of a coherent sheaf $\Sss$ on $X'$ is the subvariety
\[\{q\in X'\,\mid\, \forall \,f\in \Ann_q(\Sss)\,:\,f(q)=0\}\]
which is defined by the sheaf $\Ann(\Sss)$ of ideals called the annihilator of $\Sss$:
\[\Ann_q(\Sss):=\{f\in\Oss_q\,\mid\,\forall\, s\in\Sss_q\,:\, f\cdot s=0\in\Sss_q\}\quad\forall\, q\in X'.\]
In fact, if $q\in X'\setminus\supp(\Sss)$, then $\Sss_q=0$ and so $\Ann_q(\Sss)=\Oss_q$ contains functions not vanishing at $q$. Conversely, if $\Ann_q(\Sss)$ contains units of the local ring $\Oss_q$ with the maximal ideal $\{f\in\Oss_q\mid f(q)=0\}$, then $\Ann_q(\Sss)=\Oss_q$ and $f\cdot s=0$ for all $f\in \Oss_q$ and $s\in\Sss_q$. Since $\Sss_q$ is finitely generated (over $\Oss_q$), this implies $\Sss_q=0$ and proves the claim.

Clearly $\Sss$ is an $\Oss_{X'}/\Ann(\Sss)$-module. Due to the Noether normalisation (\cite[Corollary~3.3.19 with $k=0$ and $I=\Ann_q(\Sss)$]{DJo}), the stalks of a coherent sheaf $\Sss$ with zero-dimensional support are finitely generated $\C$-modules and therefore finite-dimensional vector spaces. In particular, $\delta_q$ is finite.

\noindent (b) We show $\C+r_q^n\subset \C+\Bar{\Oss}_q$ by decomposing
\begin{equation}\label{eq:decomposition}
\Bar{\Oss}_q=\bigoplus_{p\in\pi^{-1}[\{q\}]}\Oss_p,
\end{equation}
where the $\Oss_p$ are local rings with maximal ideals $r_p$ and
\[r_q=\bigoplus_{p\in\pi^{-1}[\{q\}]}r_p\;.\]
On the $\Bar{\Oss}_q$-module $\Bar{\Oss}_q/\Oss_q$, the natural homomorphism $\Oss_p\hookrightarrow\Bar{\Oss}_q$ induces for each $p\in\pi^{-1}[\{q\}]$ the structure of an $\Oss_p$-module. Since $\delta_q<\infty$, this module is finitely generated. The application of Krull's intersection Theorem \cite[Corollary~1.3.5]{DJo} to this module over the Noetherian local ring $\Oss_p$ with maximal ideal $r_p$ yields $n_p\in\mathbb{N}$ with $r_p^{n_p}\subset\mathfrak{c}_q\subset\Bar{\Oss}_q$. Choosing $n:=\max\{n_p\mid p\in\pi^{-1}[\{q\}]\}$, we obtain
\[\hspace{20mm}r_q^n=\left(\bigoplus\nolimits_{p\in\pi^{-1}[\{q\}]}r_p\right)^n=\bigoplus\nolimits_{p\in\pi^{-1}[\{q\}]}r_p^n\subset\mathfrak{c}_q\;.\]
\end{proof}
Finally, we remark that $\delta_q>0$ is equivalent to $q\in S'$. In fact, if $q\in S'$, then $\Bar{\Oss}_q\neq\Oss_q$, so $\delta_q>0$. Conversely, if $\delta_q=0$, then $\Bar{\Oss}_q=\Oss_q$, so $q\not\in S'$.

\subsection[Construction of a one-dim complex]{Construction of a one-dimensional complex analytic space from its normalisation}
In the preceding section, $X'$ was given. But now, a Riemann surface $X$ is given and we construct the one-dimensional singular complex analytic spaces $X'$ with normalisation $X$. We call such $X'$ singular curves. We will see that every singular curve $X'$ is given by the data $(X,S,R,\Oss_{X'})$. Here, $X$ is a Riemann surface, $S$ is a  discrete subset of $X$ and $R$ is an equivalence relation on $S$. This triple $(X,S,R)$ defines the topological space $X'$:

Extend $R$ to an equivalence relation on $X$ such that every $p\in X\setminus S$ is equivalent only to itself. Then, define the topological quotient space $X':=X/R$ with canonical map $\pi:X\to X'$. This map will turn out to be the normalisation map of $X'$ and the set $S':=\pi[S]\subset X'$ will be the set of singular points of $X'$. In particular, the equivalence class of a singular point $q\in S'$ contains all $p\in S$ which are in the preimage $\pi^{-1}[\{q\}]$ in the normalisation.

Let $\Oss_X$ be the sheaf of holomorphic functions on $X$ and $\Bar{\Oss}_{X'}:=\pi_\ast\Oss_X$ its direct image on $X'$. Then, the last datum $\Oss_{X'}$ is a sheaf of subrings of $\Bar{\Oss}_{X'}$ with two properties. At first, $\Oss_q\subsetneq\Bar{\Oss}_q$ if and only if $q\in S'$. Secondly, for every $q\in S'$, there exists $n\in \mathbb{N}$ such that
\begin{equation}
\label{eq:datainclusion}
\C+r_q^n\subset\Oss_q \subset\C+r_q \;,
\end{equation}
 where $r_q$ is the radical of $\Bar{\Oss}_q$ as defined in~\eqref{eq:radical}.
\begin{Remark}\label{rem:data}\begin{enumerate}
\item For $q\in S'$, there exists $n\in\mathbb{N}$ such that \ $\C+r_q^n\subset\Oss_q$ if and only if $\delta_q=\dim(\Bar{\Oss}_q/\Oss_q)<\infty$. In fact, one has
\[
\dim\big(\Bar{\Oss}_q/(\C+r_q^n)\big)=n\cdot\#\pi^{-1}[\{q\}]-1.
\]
The other direction follows from the proof of Proposition~\ref{prop1}~(b).
\item\label{separate} The condition \,$\Oss_q \subset \C+ r_q$\, has the consequence that the holomorphic functions on \,$X'$\, do not separate the points of $\pi^{-1}[\{q\}]$.
\item Choosing the sheaf $\Oss_{X'}$ is equivalent to choosing subrings $\Oss_q$ of $\Bar{\Oss}_q$ obeying~\eqref{eq:datainclusion} for all $q\in S'$. Indeed, together with $\Oss_q=\Bar{\Oss}_q$ for $q\in X'\setminus S'$, any such choice defines a sheaf $\Oss_{X'}$ on $X'$ since restricting the stalks $\Oss_q$  for $ q\in S'$ to a subset of finite codimension does not affect the subrings $\Oss_q$ for $q\not\in S'$.
\end{enumerate}
\end{Remark}
\begin{Proposition}\label{P:covers}
Let $(X,S,R,\Oss_{X'})$ be given as above. Then, up to isomorphism, there exists exactly one singular curve $X'$ with normali- sation $X$, set of singular points $S'=S/R$ and sheaf of holomorphic functions $\Oss_{X'}$. Moreover, every singular curve $X'$ can be obtained in this way.
\end{Proposition}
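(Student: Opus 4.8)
The plan is to construct $X'$ as a locally ringed space from the topological quotient $X/R$ together with a natural sheaf of rings, verify that it is a one-dimensional reduced complex analytic space, and then check that its normalisation recovers the given data. First I would make precise the sheaf on $X' = X/R$: for an open set $U \subseteq X'$ set $\Oss_{X'}(U)$ to be the subring of $(\pi_\ast \Oss_X)(U) = \Oss_X(\pi^{-1}[U])$ consisting of those $f$ whose germ at each $q \in U$ lies in the prescribed subring $\Oss_q \subseteq \Bar{\Oss}_q$ (with $\Oss_q = \Bar{\Oss}_q$ for $q \notin S'$). Since $S$ is discrete and the conditions \eqref{eq:datainclusion} only constrain finitely many coordinates at each singular point, this is a genuine subsheaf of $\pi_\ast \Oss_X$ whose stalk at $q$ is exactly $\Oss_q$; one should note that $(X', \Oss_{X'})$ is a locally ringed space with local rings $\Oss_q$ (the maximal ideal being $\{f \in \Oss_q : \pi^\ast f(p) = 0 \ \forall p \in \pi^{-1}[\{q\}]\}$, well-defined because $\Oss_q \subseteq \C + r_q$ forces $f$ to take a common value on $\pi^{-1}[\{q\}]$, using Remark~\ref{rem:data}~\eqref{separate}).

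Next I would prove that $(X', \Oss_{X'})$ is a complex analytic space, i.e.\ locally isomorphic to a model analytic set. Away from $S'$ there is nothing to do: $\pi$ is a biholomorphism there. Near a singular point $q$, pick a small neighbourhood $V \subseteq X$ of $\pi^{-1}[\{q\}]$ with $V \cap S = \pi^{-1}[\{q\}]$; then $\Bar{\Oss}_q$ is a finite $\C$-algebra extension of $\Oss_q$ by Proposition~\ref{prop1}~(a), so $\Oss_q$ is a Noetherian local $\C$-algebra, analytically of dimension one and reduced (it injects into $\Bar{\Oss}_q = \bigoplus_p \Oss_{X,p}$, a product of regular one-dimensional rings). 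Choosing finitely many generators $g_1,\dots,g_m$ of the maximal ideal of $\Oss_q$ gives an embedding of a neighbourhood of $q$ as an analytic subset of a polydisc in $\C^m$: concretely, shrink $V$ so that $g_1,\dots,g_m$ are represented by bounded holomorphic functions on $V$, and the map $V \to \C^m$ factors through $\pi$ (again by Remark~\ref{rem:data}~\eqref{separate}, the $g_i$ are constant on $\pi^{-1}[\{q\}]$) to give a map $\pi[V] \to \C^m$ whose image is the desired analytic model and whose structure sheaf is $\Oss_{X'}|_{\pi[V]}$. This step — producing the embedding and identifying the image as an \emph{analytic} set with the right structure sheaf — is the main obstacle, because it requires the finiteness from Proposition~\ref{prop1} together with an application of the analytic Nullstellensatz / finite mapping theorem; this is exactly the analytic translation of \cite[Chapter~IV \S1]{Se}, and the key input is that $\Oss_q$ is a \emph{finite-codimension} subalgebra of the semilocal ring $\Bar{\Oss}_q$ of the partial normalisation.

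Then I would identify the normalisation of the space just constructed. The integral closure of $\Oss_q$ in its total ring of fractions $\Mss_{X',q}$ contains $\Bar{\Oss}_q$ because $\Bar{\Oss}_q = \bigoplus_p \Oss_{X,p}$ is a finite $\Oss_q$-module (each element is integral over $\Oss_q$), and it cannot be larger since $\Bar{\Oss}_q$ is already a finite product of discrete valuation rings, hence integrally closed. Therefore the normalisation of $(X', \Oss_{X'})$ has structure sheaf $\Bar{\Oss}_{X'} = \pi_\ast \Oss_X$ and is exactly $X$, with normalisation map $\pi$; the set of non-normal points is where $\Oss_q \subsetneq \Bar{\Oss}_q$, which by the first hypothesis on $\Oss_{X'}$ is precisely $S' = S/R$. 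This shows the data $(X, S, R, \Oss_{X'})$ are recovered, giving both existence and the fact that any two singular curves with the same data are isomorphic (an isomorphism of the normalisations respecting $S$, $R$ and the subsheaves $\Oss_q$ descends to an isomorphism of the quotients).

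Finally, for the last assertion — that \emph{every} singular curve arises this way — I would start from an arbitrary singular curve $X'$, take $\pi : X \to X'$ its normalisation (which exists and is finite by the results quoted in Section~\ref{sec:1}), set $S' = $ singular locus, $S = \pi^{-1}[S']$, let $R$ be the equivalence relation on $S$ given by the fibres of $\pi$, and let $\Oss_{X'}$ be the given structure sheaf viewed as a subsheaf of $\Bar{\Oss}_{X'} = \pi_\ast \Oss_X$. Then $S$ is discrete (Proposition~\ref{prop1}~(a) shows the singular locus is a discrete set of points), the topological identification $X/R \cong X'$ is immediate since $\pi$ is a continuous closed surjection identifying exactly the $R$-classes, and the inclusions \eqref{eq:datainclusion} are exactly Proposition~\ref{prop1}~(b). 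Hence $X'$ equals the space constructed from $(X, S, R, \Oss_{X'})$, which closes the argument.
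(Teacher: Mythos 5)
The overall framework you set up (the sheaf on $X/R$ whose stalks are the prescribed $\Oss_q$, the observation that nothing needs to be done away from $S'$, the identification of the normalisation via integral closure of $\Oss_q$ in $\Bar{\Oss}_q$, and the converse direction starting from an arbitrary singular curve) matches the paper and is fine. The problem is the heart of the statement: that $(X',\Oss_{X'})$ is a complex analytic space near each $q\in S'$, i.e.\ that $\Oss_q$ is an analytic algebra $\C\{z_1,\dots,z_m\}/I$ with $I$ finitely generated. You yourself label this ``the main obstacle'' and then discharge it by an appeal to ``the analytic Nullstellensatz / finite mapping theorem'' and to ``the analytic translation of Serre'' --- but that step \emph{is} the content of the proposition, and your outline does not supply it. Two concrete gaps: (i) choosing ``finitely many generators of the maximal ideal of $\Oss_q$'' presupposes that $\Oss_q$ is Noetherian; your one-line justification (finite codimension in $\Bar{\Oss}_q$ implies Noetherian) is essentially Eakin--Nagata, which you neither prove nor cite, and which the paper deliberately avoids. (ii) Even granted Noetherianness and generators $g_1,\dots,g_m$ of the maximal ideal, the surjectivity of $\C\{z_1,\dots,z_m\}\to\Oss_q$, $z_i\mapsto g_i$ --- which is what you need to identify the image of $\pi[V]\to\C^m$ as an analytic model whose structure sheaf is $\Oss_{X'}$ --- is automatic only for rings already known to be analytic algebras, which is precisely what is being proved; for a bare finite-codimension subring of $\Bar{\Oss}_q$ it requires an argument, not an invocation.

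The paper closes exactly this gap by a different, explicit route. First it shows that $\Bar{\Oss}_q$ is the integral closure of $\C+r_q^n$, hence of $\Oss_q$; this produces monic polynomials $P_i$ with coefficients $b_1,\dots,b_r\in\Oss_q$ satisfied by local coordinates $f_i$ at the points of $\pi^{-1}[\{q\}]$. Then, with $C$ the image of $\C\{b_1,\dots,b_r\}\to\Oss_q$ (Noetherian as a quotient of a convergent power series ring), the Weierstra{\ss} Division Theorem shows that $\C\{f_1,\dots,f_k\}/(P_1,\dots,P_k)\simeq\C+r_q$ is a finitely generated $C$-module; hence the submodule $\Oss_q$ is finitely generated over $C$ by some $y_1,\dots,y_m$, and the resulting surjection $\C\{b_1,\dots,b_r,y_1,\dots,y_m\}\to\Oss_q$ with finitely generated kernel realises $\Oss_q$ as an analytic algebra --- giving Noetherianness and the local model in one stroke, with no appeal to Eakin--Nagata or to a maximal-ideal embedding. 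Your integral-closure computation for the normalisation and your treatment of the converse direction are consistent with the paper, but until you carry out a finiteness argument of this kind (or genuinely prove the finite mapping step you allude to), the existence half of your proof is incomplete.
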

\begin{proof}
As before, we extend the relation $R$ to $X$ and consider $X':=X/R$ as topological space with the canonical projection $\pi:X\to X'$. We view $X'$ as a ringed space with the sheaf $\Oss_{X'}$. Then $\pi|_{X\setminus S}:X\setminus S\to X'\setminus S'$ induces an isomorphism of ringed spaces via the direct image of sheaves. This implies that $X'\setminus S'$ is a smooth complex curve and that $\pi|_{X\setminus S}$ is biholomorphic. It remains to show that $X'$ is a complex curve with normalisation $X$.

As a first step, we prove that for $n\in\mathbb{N}$ and $q\in S'$, the integral closure of $\C+r_q^n$ is equal to $\Bar{\Oss}_q$. For every $f\in r_q$, we have $f^n\in r_q^n$, so $r_q$ is integral over $r_q^n$. Moreover, every $f\in \Bar{\Oss}_q$ which is locally constant on $X$ is a root of a polynomial with constant coefficients and therefore integral over $\C$. Due to~\cite[Example~4.4.7~(1)]{DJo}, $\Bar{\Oss}_q=(\C\{x\})^{\#\pi^{-1}[\{q\}]}$ is integrally closed in the stalk of meromorphic germs. It follows that $\Bar{\Oss}_q$ is the integral closure of $\C+r_q^n$ since every $f\in\Bar{\Oss}_q$ is the sum of an element of $r_q$ and a germ which is locally constant on $X$. This first step together with~\eqref{eq:datainclusion} yields that $\Bar{\Oss}_q$ is also the integral closure of $\Oss_q$.

In the second step, we transfer the application of \cite[III~\S 3.12 Lem\-ma~10]{Se} in the proof of \cite[IV~\S1.3 Proposition 2]{Se} and the proof of that Lemma to our situation ($A=\C+r_q$ and $B=\Oss_q$). Instead of showing that $\C+r_q$ is of finite type in the sense of Serre, we find $f_1,\ldots,f_k\in\C+r_q$ such that the canonical map $\varphi:\C\{f_1,\ldots,f_k\}\to \C+r_q$ is surjective. This means that $\C\{f_1,\ldots f_k\}/\kernel(\varphi)\simeq\C+r_q$. More precisely, let $\pi^{-1}[\{q\}]=\{p_1,\ldots,p_k\}$ and choose $f_i$ as a local coordinate at $p_i$ (i.e. $f_i(p_i)=0$ and $f_i'(p_i)\neq 0$) and as identically zero near $p_j$ for $j\ne i$. Due to the first step, each $f_i$ is a root of a monic polynomial $P_i$ with coefficients in $\C+r_q^n\subset\Oss_q$. We collect the coefficients of all these polynomials, say $b_1,\ldots,b_r\in\Oss_q$. Let $C$ be the image of the canonical map $\psi:\C\{b_1,\ldots,b_r\}\to\Oss_q$. Again, $C$ is isomorphic to $\C\{b_1,\ldots,b_r\}/\kernel(\psi)$ and therefore Noetherian~\cite[Corollary~1.1.4]{DJo}.

We claim that $\C+r_q$ is a Noetherian $C$-module. Due to the Weierstra{\ss} Division Theorem \cite[Theorem~3.2.3]{DJo}, $C\{f_1\}/(P_1)$ is as a $C$-module generated by $f_1,f_1^2,\ldots,f_1^{\deg P_1-1}$. Inductively, this shows that $C\{f_1,\ldots,f_k\}/(P_1,\ldots P_k)$ is a finitely generated $C$-module. Because $C$ is Noetherian, the claim follows from \cite[Lemma~1.2.15]{DJo}.

By \cite[Definition~1.2.13]{DJo}, the submodule $\Oss_q$ of $\C+r_q$ is a finitely generated $C$-module with generators $y_1,\ldots,y_m\in \Oss_q$. Then, the canonical map $\C\{b_1,\ldots ,b_r,y_r,\ldots y_m\}\to\Oss_q$ with kernel $I$ is surjective. Hence, $\Oss_q\simeq\C\{b_1,\ldots,b_r,y_1,\ldots ,y_m\}/I$. The ideal $I$ of the Noetherian ring $\C\{b_1,\ldots ,b_r,y_r,\ldots y_m\}$ is finitely generated. In particular, $X'$ nearby $q$ is a complex analytic space. This holds for all $q\in S'$ and therefore $X'$ is a complex curve. By \cite[Definition~1.5.3]{DJo}, the first step shows that $\Bar{\Oss}_q$ is the normalisation of the local ring $\Oss_q$. Since this holds for all $q\in S'$, $X$ is the normalisation of $X'$ by \cite[Definition~4.4.5]{DJo}.

Conversely, if a singular curve $X'$ is given, then let $X$ be the normalisation of $X'$ and $S,R$ and $\Oss_{X'}$ as in Section~\ref{sec:1}. The data $(X,S,R,\Oss_{X'})$ yields the singular curve $X'$. More precisely, $(X,S,R)$ uniquely determines the topological space $X'$ and $\Oss_{X'}$ is the structure sheaf of the singular curve $X'$.
\end{proof}
\begin{Remark}
Compared to the situation in \cite[IV~\S1.3 Proposition~2]{Se}, we have the advantage that $X'$ is already determined by $X$ at regular points. So we only have to describe $X$ nearby the points of $S'$. At a first sight, choosing $A=\Bar{\Oss}_q$ might seem to be more natural than $A=\C+r_q$ in the proof of the proposition. However, $\Bar{\Oss}_q$ has several maximal ideals. With $A=\C+r_q$, both rings $A$ and $B$ are local rings of the form $\C\{x_1,\ldots,x_k\}/I$ with suitable ideals $I$. Establishing this property of $B$ is the essential step in the proof of Proposition~\ref{P:covers}.
\end{Remark}
\begin{Example}\label{example1} Examples of singular points on a Riemann surface $X$.
\begin{enumerate}
\item\textbf{Ordinary double point}\label{double point}. Let $p_1\neq p_2\in X$, $S=\{p_1,p_2\}$ and $R$ the relation which identifies $p_1$ with $p_2$. At the unique point $q\in S/R$, the ring $\Oss_q$ is the subring of $\Bar{\Oss}_q=\Oss_{p_1}\oplus\Oss_{p_2}$ of elements $f_1\oplus f_2$ with $f_1(p_1)=f_2(p_2)$. In this case $\Oss_q$ equals $\C+r_q$ and is isomorphic to $\C\{x_1,x_2\}/(x_1\cdot x_2)$. Here $\delta_q=1$.
\item\textbf{Ordinary cusp}.\label{cusp} Let $p\in X$, $S=\{p\}=\{q\}=S'$. At $q$, the ring $\Oss_q$ is the subring of $\Bar{\Oss}_q=\Oss_p$ of elements $f$ with $f'(p)=0$. In this case, $\Oss_q$ equals $\C+r_q^2$. Hence, $\Bar{\Oss}_q\simeq\C\{t\}$  and $\Oss_q$ is the subring generated by $x_1=t^2$ and $x_2=t^3$. The kernel of the natural map $\C\{x_1,x_2\}\to\C\{t\}$ is defined by $(x_2^2-x_1^3)$, and therefore  $\Oss_q$ is isomorphic to $\C\{x_1,x_2\}/(x_2^2-x_1^3)$. Here, $\delta_q=1$.
\item\textbf{Singular point defined by a divisor}. Let $D=\sum_{i=1}^Nn_ip_i$ be a finite divisor on $X$ with $n_i\in\mathbb{N}\setminus\{0\}$, $S=\{p_1,\ldots,p_N\}$ and $R$ the unique relation identifying all points of $S$ to one point $q\in S'$. At $q$, the ring $\Oss_q$ is the subring $\C+\bigoplus_{i=1}^N(r_{p_i})^{n_i}$ of $\Bar{\Oss}_q=\bigoplus_{i=1}^N\Oss_{p_i}$. Here, $\delta_q=\deg D-1$. For $D=p_1+p_2$, we obtain the ordinary double point and for $D=2p$ the ordinary cusp.
\end{enumerate}\end{Example}
\section{Generalised Divisors}\label{se:generalised divisors}
In this section, we transfer the concept of a line bundle on a Riemann surface to a singular curve $X'$ for which we use the notations introduced in Section~\ref{sec:1}.

There exists a notion of line bundles on singular curves, see \cite[Chapter~II~\S3]{GPR}. However, the concept we introduce here is based on divisors instead of on line bundles. On Riemann surfaces, both concepts are equivalent, see \cite[\S29, \S18]{Fo}, but on singular curves, generalised divisors are more general than line bundles. In Section~\ref{se:BA}, we will use generalised divisors to construct so-called Baker-Akhiezer functions on singular curves. Our aim is to construct Baker-Akhiezer functions on singular curves in the most general setting we can imagine. 


On smooth surfaces, divisors \,$D$\, correspond to finitely generated subsheaves \,$\Oss_D$\, of \,$\Mss$\,, compare \cite[\S16.4]{Fo}. This correspondence inspires the following definition:
\begin{Definition}\cite[\S 1]{Ha86}
A \emph{generalised divisor} on $X'$ is a finitely generated subsheaf $\Sss$ of the sheaf of meromorphic functions on $X'$.\\
The \emph{support} $\mathrm{supp}(\Sss)$ of a generalised divisor $\Sss$ is the set of all $q\in X'$ such that $\Sss_q\neq \Oss_{q}$.\\
\end{Definition}

Here, we extend the definition of the support for classical divisors to generalised divisors, in deviation from the definition of the support of a coherent sheaf \eqref{eq:support-sheaf}.

\begin{Remark}\label{R:invertible}
The sheaves corresponding to line bundles are characterized as locally free sheaves of rank \,$1$. These sheaves are also called invertible because the tensor product with such a sheaf has an inverse~\cite[p. 143]{Ha}. A subsheaf of the sheaf of meromorphic functions which is locally free cannot have rank \,$\geq 2$\, since a sheaf homomorphism of \,$\Oss_{X'}^n$\, into a generalised divisor cannot be injective if \,$n\geq 2$\,. We will see in Corollary~\ref{C:meromorphic-functions} that every locally free sheaf of rank \,$1$\, has a meromorphic section. Therefore, the correspondence between line bundles and divisors on Riemann surfaces (\cite[\S 29]{Fo}) yields a correspondence between line bundles and locally free generalised divisors on singular curves. 
In particular, invertible sheaves are the same as locally free generalised divisors.
\end{Remark}
\begin{Proposition}\label{prop3}
The support of a generalised divisor $\Sss$ is a discrete subset of $X'$.
\end{Proposition}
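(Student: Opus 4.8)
The plan is to prove that no point of $X'$ is an accumulation point of $\supp(\Sss)$; this yields discreteness (and in fact closedness). Since this is a local question, fix a hypothetical accumulation point $q_0\in X'$; we shall produce a neighbourhood of $q_0$ meeting $\supp(\Sss)$ only in $q_0$. Because the set $S'$ of singular points of $X'$ is discrete, we may choose an open neighbourhood $U$ of $q_0$ with $U\cap S'\subseteq\{q_0\}$. Then every $q\in U\setminus\{q_0\}$ is a smooth point, $\pi^{-1}[\{q\}]$ consists of a single point $p$, and $\pi^\ast$ identifies $\Oss_{X',q}$ with the discrete valuation ring $\Oss_{X,p}\cong\C\{t\}$. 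Because $\Sss$ is finitely generated, after shrinking $U$ we may assume $\Sss|_U$ is generated over $\Oss_{X'}|_U$ by finitely many meromorphic functions; discarding those that vanish identically (and using that $\Sss$, being a generalised divisor, is nowhere the zero sheaf) we obtain generators $s_1,\dots,s_n\in\mathcal{M}_{X'}(U)\setminus\{0\}$.

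The crucial step is to shrink $U$ once more so that each $s_i$ is holomorphic and nowhere vanishing on $U\setminus\{q_0\}$. Here one uses that $\pi$, being the normalisation map, is finite and in particular proper, so that $\pi^{-1}(U)$ shrinks into any prescribed neighbourhood of the finite fibre $\pi^{-1}[\{q_0\}]=\{p_1,\dots,p_k\}$ as $U$ shrinks to $q_0$. Each pullback $\pi^\ast s_i$ is a non-zero meromorphic function, so near each $p_j$ it equals a local coordinate raised to some integer power times a nowhere-vanishing holomorphic factor, and hence has neither a zero nor a pole on a small punctured disc around $p_j$. Choosing $U$ small enough to realise this simultaneously for the finitely many $s_i$ and $p_j$, we conclude that $\pi^\ast s_i$, and therefore $s_i$, is a unit in $\Oss_{X',q}$ for every $q\in U\setminus\{q_0\}$.

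Granting this, fix $q\in U\setminus\{q_0\}$. On one hand each $s_{i,q}$ lies in $\Oss_{X',q}$, so $\Sss_q=\sum_{i=1}^n\Oss_{X',q}\,s_{i,q}\subseteq\Oss_{X',q}$; on the other hand $s_{1,q}$ is a unit of $\Oss_{X',q}$, so $1\in\Sss_q$ and hence $\Oss_{X',q}\subseteq\Sss_q$. Therefore $\Sss_q=\Oss_{X',q}$, i.e. $q\notin\supp(\Sss)$, which gives $\supp(\Sss)\cap U\subseteq\{q_0\}$ and contradicts the choice of $q_0$ as an accumulation point. The only real obstacle is making the two shrinking steps precise and simultaneous: one needs the properness of $\pi$, so that preimages of small neighbourhoods of $q_0$ contract onto $\pi^{-1}[\{q_0\}]$, together with the elementary fact that a non-zero meromorphic germ on a disc has a well-defined finite order at the centre and hence only isolated zeros and poles. (Alternatively, $\supp(\Sss)$ equals the union of the support of the coherent sheaf $(\Sss+\Oss_{X'})/\Oss_{X'}$ and the zero locus of the coherent ideal sheaf $\Sss\cap\Oss_{X'}$; both are analytic subsets, and being proper they are discrete since $X'$ is one-dimensional.)
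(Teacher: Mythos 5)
Your argument is essentially the paper's own proof: a generalised divisor is locally generated by finitely many meromorphic functions, and at any nearby point where all generators are holomorphic and at least one is a unit the stalk equals $\Oss_{X',q}$, so the support can only consist of the isolated poles and zeros of the generators; the paper states exactly this in three lines, without your contradiction framing, the properness of $\pi$, or the coherence-based alternative. The one caution is your claim that a generator which is not identically zero on $U$ has finite order at \emph{each} point of $\pi^{-1}[\{q_0\}]$ — it may vanish identically on one local branch through $q_0$ even though it is nonzero on $U$, so you should instead select, for each branch, a generator not vanishing identically on it (such a generator exists by the nondegeneracy implicit in the notion of generalised divisor, an assumption the paper's equally terse proof also leaves unstated).
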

\begin{proof}
For \,$q\in X'$, there exists an open neighbourhood \,$U$\, of \,$q$\, such that \,$\Sss$\, is generated as an \,$\Oss_U$-module by finitely many meromorphic functions \,$f_1,\dotsc, f_n$\,, where \,$n$\, can depend on \,$q$. If all \,$f_i$\, have no pole and at least one \,$f_i$\, does not vanish at \,$q$, then \,$\Sss_q$\, equals \,$\Oss_q$\,.
Therefore, the support of $\Sss$ is a discrete subset of $X'$.
\end{proof}
Near a regular point of \,$X'$, a generator of maximal pole order respectively minimal zero order alone suffices to generate \,$\Sss$. In particular, on the regular set of \,$X'$\,, \,$\Sss$\, equals \,$\Oss_D$\, for some classical divisor \,$D$\,.  

The next step is to define the degree of $\Sss$. Note that for any pair \,$\Sss_1, \Sss_2$\, of generalised divisors, there exists a divisor \,$\Sss$\, with
\begin{equation}
\Sss_1\subset \Sss \quad \text{ and } \quad \Sss_2\subset \Sss \; . 
\label{eq:smallest-common-divisor}
\end{equation}
Indeed, the smallest \,$\Sss$\, with this property is generated locally by the union of local generators of \,$\Sss_1$\, and \,$\Sss_2$\,. 
\begin{Proposition}
For generalised divisors \,$\Sss_1, \Sss_2$\, and \,$\Sss$\, with finite support and \eqref{eq:smallest-common-divisor}, the difference
\[ \dim H^0(X',\Sss/\Sss_2) - \dim H^0(X',\Sss/\Sss_1) \]
does not depend on \,$\Sss$\,.
\end{Proposition}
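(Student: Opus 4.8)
The plan is to reduce the statement to a chain \,$\Sss_i\subset\Sss\subset\Sss'$\, of generalised divisors and then to extract the required invariance from a single short exact sequence of skyscraper-type sheaves. First I would note that if \,$\Sss$\, and \,$\tilde\Sss$\, are two generalised divisors with finite support, both satisfying \,$\Sss_1\subset\Sss,\tilde\Sss$\, and \,$\Sss_2\subset\Sss,\tilde\Sss$\,, then both are contained in the smallest common divisor \,$\Sss'$\, of \,$\Sss$\, and \,$\tilde\Sss$\, from \eqref{eq:smallest-common-divisor}. Since \,$\supp(\Sss')\subset\supp(\Sss)\cup\supp(\tilde\Sss)$\, is finite (at a point where both \,$\Sss$\, and \,$\tilde\Sss$\, agree with \,$\Oss_{X'}$\,, so does \,$\Sss'$\,), it suffices to prove: whenever \,$\Sss_i\subset\Sss\subset\Sss'$\, are generalised divisors of finite support, the quantity \,$\dim H^0(X',\Sss'/\Sss_i)-\dim H^0(X',\Sss/\Sss_i)$\, is independent of \,$i\in\{1,2\}$\,. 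Subtracting the resulting identities for \,$i=1$\, and \,$i=2$\, then yields the proposition.

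For the main step I would, for each \,$i$\,, use the short exact sequence of \,$\Oss_{X'}$-modules
\[
0\longrightarrow \Sss/\Sss_i\longrightarrow \Sss'/\Sss_i\longrightarrow \Sss'/\Sss\longrightarrow 0,
\]
which is well defined because \,$\Sss_i\subset\Sss\subset\Sss'$\,. Each of the three quotients is a quotient of two finitely generated subsheaves of \,$\Mss$\,; such a subsheaf is, near any point, contained in a copy of \,$\Oss_{X'}$\, after clearing denominators, hence coherent by Oka's theorem, so all three quotient sheaves are coherent. Outside the finite set \,$\supp(\Sss_i)\cup\supp(\Sss)\cup\supp(\Sss')$\, all three of \,$\Sss_i,\Sss,\Sss'$\, coincide with \,$\Oss_{X'}$\,, so the three quotient sheaves vanish there; in particular they have finite support. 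A coherent sheaf with finite support on the one-dimensional space \,$X'$\, has finite-length stalks by the Noether normalisation argument used in the proof of Proposition~\ref{prop1}(a), so its \,$H^0$\, is a finite direct sum of finite-dimensional \,$\C$-vector spaces, and being supported on a discrete closed set it is flasque, hence acyclic, so its \,$H^1$\, vanishes.

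Applying the left-exact functor \,$H^0(X',-)$\, to the sequence above and using \,$H^1(X',\Sss/\Sss_i)=0$\, gives a short exact sequence of finite-dimensional vector spaces
\[
0\longrightarrow H^0(X',\Sss/\Sss_i)\longrightarrow H^0(X',\Sss'/\Sss_i)\longrightarrow H^0(X',\Sss'/\Sss)\longrightarrow 0,
\]
so that \,$\dim H^0(X',\Sss'/\Sss_i)-\dim H^0(X',\Sss/\Sss_i)=\dim H^0(X',\Sss'/\Sss)$\,. The right-hand side does not involve \,$i$\,; subtracting the identity for \,$i=2$\, from that for \,$i=1$\, gives \,$\dim H^0(X',\Sss/\Sss_2)-\dim H^0(X',\Sss/\Sss_1)=\dim H^0(X',\Sss'/\Sss_2)-\dim H^0(X',\Sss'/\Sss_1)$\,, and together with the reduction of the first paragraph this proves the claim.

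The mathematical content of the cancellation is light; the one point that requires care is the finiteness bookkeeping in the second paragraph, namely that all three quotient sheaves are coherent with finite (hence discrete) support, which is what legitimises both the finite-dimensionality of the cohomology groups and the vanishing of \,$H^1$\,. The chain condition \,$\Sss_i\subset\Sss\subset\Sss'$\, is precisely what forces the quotients to be concentrated on the finite locus where the divisors fail to be trivial, so once this is in place the rest is a formal consequence of the long exact cohomology sequence.
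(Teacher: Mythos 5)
Your proof is correct and follows essentially the same route as the paper: reduce to a nested chain by passing to a common divisor containing both candidates, apply the short exact sequence \,$0\to\Sss/\Sss_i\to\Sss'/\Sss_i\to\Sss'/\Sss\to 0$\,, and use the vanishing of \,$H^1$\, for finitely supported sheaves to turn the long exact sequence into an additivity of dimensions. The only differences are presentational: you write the short exact sequence with the subsheaf in the correct position and spell out the coherence and acyclicity bookkeeping that the paper delegates to the argument of \cite[16.7~Lemma]{Fo}.
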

\begin{proof}
Let \,$\Sss$\, and \,$\Sss'$\, be given with finite support and \,$\Sss_1,\Sss_2 \subset \Sss, \Sss'$\,. We have to show that
\begin{multline}
\label{eq:H0-to-show}
\dim H^0(X',\Sss/\Sss_2)-\dim H^0(X',\Sss/\Sss_1)=\\=\dim H^0(X',\Sss'/\Sss_2) - \dim H^0(X',\Sss'/\Sss_1) \; .
\end{multline}
There always exists \,$\Sss''$\, with finite support and \,$\Sss,\Sss'\subset \Sss''$\,. Hence, we may suppose without loss of generality that \,$\Sss \subset \Sss'$\, holds.

For \,$k=1,2$, the sequences
\[ 0 \longrightarrow  \Sss'/\Sss \hookrightarrow \Sss'/\Sss_k \longrightarrow \Sss/\Sss_k \longrightarrow 0 \]
are exact since $\Sss/\Sss_k$ is the quotient of $\Sss'/\Sss_k$ and $\Sss'/\Sss$. Hence, the corresponding long exact sequences on the cohomology spaces are also exact, see \cite[\S15]{Fo}. Because the generalised divisors involved have finite support, the arguments in the proof of \cite[16.7~Lemma]{Fo} show that the first cohomology groups of all sheaves in the long exact sequences vanish. Therefore, they reduce to
\[
0\longrightarrow H^0(X',\Sss'/\Sss)\longrightarrow H^0(X',\Sss'/\Sss_k) \longrightarrow H^0(X',\Sss/\Sss_k) \longrightarrow 0.
\]
Due to the exactness, the alternating sums of the dimensions vanish and thus, we have
\[
\dim H^0(X',\Sss'/\Sss_k)=\dim H^0(X',\Sss'/\Sss) + \dim H^0(X',\Sss/\Sss_k).
\]
Taking the difference of these equations for \,$k=1$\, and \,$k=2$\, gives \eqref{eq:H0-to-show}.
\end{proof}
\begin{Definition}
The \emph{degree} of $\Sss$ with finite support is defined as 
\[
\deg(\Sss):= \dim H^0(X',\Sss'/\Oss_{X'}) - \dim H^0(X',\Sss'/\Sss), 
\]
where \,$\Sss'$\, is any generalised divisor with finite support containing \,$\Sss$\, and \,$\Oss_{X'}$\,. 
\end{Definition}
Because the support of a given generalised divisor \,$\Sss$\, is discrete by Proposition~\ref{prop3}, there exists for each \,$q\in \supp(\Sss)$\, a generalised divisor \,$\Sss(q)$\, with \,$\supp(\Sss(q))=\{q\}$\, and \,$\Sss(q)_q = \Sss_q$\,. We call \,$\deg_q(\Sss) := \deg(\Sss(q))$\, the \emph{degree of \,$\Sss$\, at \,$q$\,}. The \emph{classical divisor \,$D(\Sss)$\, associated to \,$\Sss$\,} is defined by 
\[ D(\Sss) := \sum_{q\in \supp(\Sss)} \deg_q(\Sss) \cdot q \; . \]
Vice versa, for every classical divisor \,$D$\, on \,$X'$, there exists a generalised divisor \,$\Sss$\, with \,$D(\Sss)=D$\,. Indeed, \,$\Sss$\, can be chosen as the locally free divisor generated locally by a suitable meromorphic function \,$f$\,. We choose \,$f$\, such that the classical divisor on \,$X$\, of the local generator \,$f$, considered as a meromorphic function on \,$X$, is projected onto \,$D(\Sss)$\, by the normalisation map \,$\pi$\,. Then \,$\deg_q(\Sss)=\deg_q(D)$\, holds for every \,$q\in X'$\, since
\begin{equation}
\label{eq:deg-deg}
\sum_{p\in \pi^{-1}[\{q\}]} \deg_{p}(f\cdot \Oss_X) =\deg_q(f\cdot \Bar{\Oss}_{X'}) - \delta_q = \deg_q(f\cdot \Oss_{X'}) \; . 
\end{equation}
Note that on the left-hand side, the degree is calculated in \,$X$\, whereas in the middle and on the right-hand side, it is calculated in \,$X'$\,. However, \,$\Sss$\, is not determined uniquely by \,$D(\Sss)$\,, because the projection of divisors on \,$X$\, onto \,$X'$\, is not injective if \,$\pi$\, is not injective. On smooth complex curves \,$X'$, there is a 1--1 correspondence between generalised divisors \,$\Sss$\, and classical divisors \,$D(\Sss)$\,. 

Furthermore, if \,$X'$\, is not smooth, there exist generalised divisors on \,$X'$\, which are not locally free and therefore, a generalised divisor on \,$X'$\, is then not uniquely determined by its classical divisor.
\begin{Example}[Ordinary Cusp]
\label{E:cusp}
Consider the curve $X'=\{(x_1,x_2)\in \C^2\,|\,x_2^2=x_1^3\}$ which is singular at $(0,0)$ with the normalisation \,$\pi: \C \to X',\; t \mapsto (t^2,t^3)$, see Example~\ref{example1}~\eqref{cusp}. We choose the generalised divisor $\Sss:=\Bar{\Oss}_{X'}$. Then, $\Sss$ is generated by $1$ and $\frac{x_2}{x_1}$. Therefore, we have
\[
\deg(\Sss)=\dim H^0(X',\Sss/\Oss_{X'})=1=\delta_{(0,0)} \; . 
\]
The classical divisor associated to \,$\Sss$\, is \,$D(\Sss)=(0,0)$\,. In this case, \,$\pi$\, is injective, so there exists a unique locally free generalised divisor \,$\Sss'$\, associated to \,$D(\Sss)$\,. \,$\Sss'$\, is generated by \,$\tfrac{x_1}{x_2}$\,. Thus, we have \,$\Sss' \neq \Sss$\, and therefore, \,$\Sss$\, is not locally free. 
\end{Example}
\section{The $\Sss'$-halfway normalisation}\label{se:halfway}
Only locally free generalised divisors determine the structure sheaf of the underlying singular curve uniquely. In Example~\ref{E:cusp}, the first generalised divisor on the singular curve with cusp is also a generalised divisor on the normalisation of that curve. In this section we investigate possible choices of structure sheaves for a given generalised divisor. Here, we are interested only in extensions of the structure sheaf of the original singular curve. More precisely: For a given singular curve \,$X'$\, with a generalised divisor \,$\Sss'$\, on \,$X'$, we look for another pair \,$(X'',\Sss'')$\, with a holomorphic map \,$\pi': X'' \to X'$\, such that \,$\pi'_\ast\Sss''=\Sss'$\,. Because of the latter condition, the map \,$\pi'$\, has to be birational for non-trivial \,$\Sss'$\,. Therefore, we consider a one-sheeted covering \,$\pi': X'' \to X'$\,, i.e.~\,$\pi'$\, is regular and biholomorphic away from a discrete subset of $X''$.  This set is contained in the preimage of the singular set \,$S'$\,. 
For any generalised divisor $\Sss''$ on $X''$, the direct image $\pi'_\ast\Sss''$ is a generalised divisor on $X'$. 
\begin{Lemma}
\label{L:projectdivisor}
On a one-sheeted covering \,$\pi': X'' \to X'$, there exists a generalised divisor \,$\Sss''$\, on \,$X''$\, with \,$\pi'_\ast\Sss''=\Sss'$\, if and only if the multiplication with meromorphic functions in \,$\pi'_\ast\Oss_{X''}$\, maps \,$\Sss'$\, into itself. Such one-sheeted coverings $\pi':X''\to X'$ are in 1--1 correspondence with sheaves $\Rss$ of subrings of $\Bar{\Oss}_{X'}$ which act on $\Sss'$ and contain $\Oss_{X'}$.
\end{Lemma}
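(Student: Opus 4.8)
The statement has two parts: an existence criterion for a generalised divisor $\Sss''$ with $\pi'_\ast\Sss''=\Sss'$, and a classification of one-sheeted coverings in terms of subsheaves of rings $\Rss$. The plan is to prove the ``only if'' direction of the first part, then the ``if'' direction by explicitly constructing $\Sss''$, and finally to read off the bijection with the sheaves $\Rss$.

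First I would establish the \emph{only if} direction. Suppose $\Sss''$ exists on $X''$ with $\pi'_\ast\Sss''=\Sss'$. Working stalkwise at a point $q\in X'$, we have $\Sss'_q=\bigoplus_{p\in\pi'^{-1}[\{q\}]}\Sss''_p$ as a module over $\Bar{\Oss}_{X'',q}=(\pi'_\ast\Oss_{X''})_q$, and in particular over $(\pi'_\ast\Oss_{X''})_q$ which is a subring of $\Bar{\Oss}_{X',q}$ containing $\Oss_{X',q}$ (the latter because $\pi'$ is a one-sheeted covering, so $\Oss_{X'}\subset\pi'_\ast\Oss_{X''}$). Since each $\Sss''_p$ is an $\Oss_{X'',p}$-module, the direct sum is a module over $(\pi'_\ast\Oss_{X''})_q$, and this is precisely the statement that multiplication by meromorphic functions in $\pi'_\ast\Oss_{X''}$ maps $\Sss'$ into itself. (Here I use the identification of meromorphic functions on $X''$ and $X'$ via $\pi'$ from Section~\ref{sec:1}.)

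For the \emph{if} direction, suppose $\Rss:=\pi'_\ast\Oss_{X''}$—or more neutrally, any sheaf $\Rss$ of subrings of $\Bar{\Oss}_{X'}$ containing $\Oss_{X'}$ that acts on $\Sss'$—is given. I would first invoke Proposition~\ref{P:covers}: a sheaf $\Rss$ of subrings of $\Bar{\Oss}_{X'}$ with $\Oss_{X',q}\subset\Rss_q\subset\Bar{\Oss}_{X',q}$ satisfying the finite-codimension condition \eqref{eq:datainclusion} (which holds automatically since $\Oss_{X',q}$ already contains $\C+r_q^n$ for some $n$, hence so does $\Rss_q$) defines a one-sheeted covering $\pi':X''\to X'$ with $\pi'_\ast\Oss_{X''}=\Rss$. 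Then I would define $\Sss''$ as the subsheaf of $\Mss_{X''}=\pi'^\ast\Mss_{X'}$ whose stalk at $p\in\pi'^{-1}[\{q\}]$ is the $p$-component of $\Sss'_q$ under the decomposition $\Bar{\Oss}_{X',q}=\bigoplus_p\Oss_{X'',p}$: concretely, $\Sss''$ is generated over $\Oss_{X''}$ by (pullbacks of) local generators of $\Sss'$. One checks $\Sss''$ is finitely generated since $\Sss'$ is, and that $\pi'_\ast\Sss''=\Sss'$ using that $\Sss'_q$ is already an $\Rss_q$-module, so it coincides with the $\Rss_q$-submodule of $\Mss_{X',q}$ it generates, which is $\bigoplus_p\Sss''_p$.

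Finally, for the 1--1 correspondence: a one-sheeted covering $\pi':X''\to X'$ with some $\Sss''$ over it satisfying $\pi'_\ast\Sss''=\Sss'$ determines $\Rss:=\pi'_\ast\Oss_{X''}$, which by the first part acts on $\Sss'$ and contains $\Oss_{X'}$; conversely such an $\Rss$ determines $\pi'$ uniquely up to isomorphism by Proposition~\ref{P:covers}. These two assignments are mutually inverse by construction. I expect the main obstacle to be bookkeeping at the singular points: verifying that the locally-defined $\Sss''$ genuinely glues to a coherent (finitely generated) subsheaf of $\Mss_{X''}$ and that $\Rss_q$ automatically satisfies the inclusion \eqref{eq:datainclusion} required to apply Proposition~\ref{P:covers}—the latter is where one must use that $\Oss_{X',q}\subset\Rss_q$ forces $\C+r_q^n\subset\Rss_q\subset\C+r_q$ to fail only possibly on the right, which is handled because $\Rss_q\subset\Bar{\Oss}_q$ and $\Bar{\Oss}_q$ itself is the structure sheaf of the normalisation, a legitimate (maximal) partial desingularisation.
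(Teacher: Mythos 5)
Your ``only if'' direction, and your construction of \,$\Sss''$\, as the \,$\Oss_{X''}$-module generated by pullbacks of local generators of \,$\Sss'$\, together with the verification \,$\pi'_\ast\Sss''=\Sss'$\,, agree with the paper. The genuine gap is in the step from a sheaf of rings \,$\Rss$\, to the covering \,$X''$\,: you claim Proposition~\ref{P:covers} applies directly because condition \eqref{eq:datainclusion} ``holds automatically'', but only the left inclusion \,$\C+r_q^n\subset\Rss_q$\, is automatic. The right inclusion \,$\Rss_q\subset\C+r_q$\, can genuinely fail -- already for \,$\Rss=\Bar{\Oss}_{X'}$\, at an ordinary double point, or for any \,$\Rss$\, whose stalk separates two points of \,$\pi^{-1}[\{q\}]$\, -- and by Remark~\ref{rem:data}~\eqref{separate} such a failure forces the underlying topological space of \,$X''$\, to differ from \,$X'$\,: the point \,$q$\, must split into several points, so Proposition~\ref{P:covers} cannot be invoked with the original equivalence relation at all.

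This is exactly where the paper's proof does its real work: it defines a finer equivalence relation \,$R''$\, on \,$S$\, (two points equivalent iff no element of \,$\Rss$\, separates them), produces for each resulting point \,$p''$\, an idempotent \,$P_{p''}\in\Rss_q$\, which is \,$1$\, near the corresponding class and \,$0$\, near the remaining points of \,$\pi^{-1}[\{q\}]$\,, uses \,$\sum_{p''}P_{p''}=1$\, and \eqref{eq:decomposition} to decompose \,$\Rss_q=\bigoplus_{p''}P_{p''}\cdot\Rss_q$\, into local rings each contained in \,$\C+r_{q''}$\,, and only then applies Proposition~\ref{P:covers} to the new data \,$(X,S,R'',\Oss_{X''})$\,, finally checking that the resulting curve is a one-sheeted covering of \,$X'$\, splitting \,$\pi$\,. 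None of this appears in your proposal; your closing sentence acknowledges that the right-hand inclusion may fail but ``handles'' it only by remarking that \,$\Bar{\Oss}_q$\, is the structure sheaf of the normalisation, which says nothing about an intermediate \,$\Rss_q$\, and produces neither the new topological space nor the decomposition into local rings. Note also that your description of the stalk of \,$\Sss''$\, as ``the \,$p$-component of \,$\Sss'_q$'' presupposes precisely this idempotent decomposition. Without these steps, the existence of \,$X''$\, with \,$\pi'_\ast\Oss_{X''}=\Rss$\, -- and hence the asserted 1--1 correspondence -- is not established.
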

\begin{proof}
Let $\pi':X''\to X'$ be a one-sheeted covering and \,$\Sss''$\, the \,$\Oss_{X''}$-module locally generated by the pullbacks of some choice of local gene- rators of \,$\Sss'$\,. Because \,$\pi_\ast\Oss_{X''}$\, contains \,$\Oss_{X'}$\,, \,$\pi'_\ast\Sss''$\, is the \,$\pi'_\ast\Oss_{X''}$-module generated by these generators of \,$\Sss'$\,. If \,$\Sss'$\, is a \,$\pi'_\ast\Oss_{X''}$-module, then \,$\pi_\ast\Sss''=\Sss'$\, holds. Conversely, if \,$\pi'_\ast\Sss''=\Sss'$\, holds, then \,$\Sss'$\, is a \,$\pi'_\ast\Oss_{X''}$-module since \,$\Sss''$\, is a \,$\Oss_{X''}$-module.

For the proof of the 1--1 correspondence, we first note that for such one-sheeted coverings, $\pi'_\ast\Oss_{X''}$ is a sheaf of subrings of $\Bar{\Oss}_{X'}$ which acts on $\Sss'$ and contains $\Oss_{X'}$. Conversely, let \,$\Rss$\, be a sheaf of subrings of \,$\Bar{\Oss}_{X'}$\, which acts on $\Sss'$ and contains $\Oss_{X'}$. For \,$q\in S'$, the stalks \,$\Rss_q$\, are not necessarily contained in \,$\C+r_q$\,, see Equation~\eqref{eq:datainclusion}. \,$\Rss$\, implicitly determines the topological space \,$X''$\,: For each \,$q \in S'$, only those points of \,$\pi^{-1}[\{q\}]$\, correspond to different points in \,$X''$\, which are separated by elements of $\Rss_q$. Define an equivalence relation \,$R''$\, on \,$S$\, such that two points of \,$S$\, are equivalent if and only if each element of \,$\Rss$\, (considered as a function on \,$X$) takes the same value at the two points. The triple \,$(X,S,R'')$\, defines \,$X''$\, as a topological space as in Proposition~\ref{P:covers} and also defines the map \,$X\to X''$. Because of the inclusion \,$\Oss_{X'} \subset \Rss$\,, the holomorphic functions on \,$X'$, considered as holomorphic functions on \,$X$, are constant on the equivalence classes of \,$R''$. Hence, the equivalence classes of \,$R$\, are subsets of the equivalence classes of \,$R''$\,. This defines the one-sheeted covering $\pi':X''\to X'$ which splits $\pi$ into \,$X \to X'' \to X'$\,.

By definition of $R''$, there exists for every \,$p'' \in \pi'^{-1}[\{q\}]$\, an element \,$P_{p''} \in \Rss_q$\, which is identically \,$1$\, near all points in the equivalence class of \,$R''$\, corresponding to \,$p''$\, and identically \,$0$\, near all other points of \,$\pi^{-1}[\{q\}]$\,. Because of \,$\sum_{p''\in \pi'^{-1}[\{q\}]} P_{p''}=1$\,, the decomposition~\eqref{eq:decomposition} induces the following decomposition of $\Rss_q$ into subrings, see~Remark~\ref{rem:data}~\eqref{separate}:
$$ \Rss_q = \bigoplus_{p''\in\pi'^{-1}[\{q\}]} P_{p''}\cdot \Rss_q \; . $$
Therefore, there exists a unique subsheaf \,$\Oss_{X''}$\, of the direct image of \,$\Oss_X$\, on \,$X''$\, with $\pi'_\ast\Oss_{X''}=\Rss$\,. For all equivalence classes \,$q''$\, of $R''$, the ring \,$\Oss_{X'',q''}$\, is contained in \,$\C + r_{q''}$\, with the radical \,$r_{q''}$\,  defined analogously as \,$r_q$\, in Equation~\eqref{eq:radical}. Due to Proposition~\ref{P:covers}, the data $(X,S,R'',\Oss_{X''})$ corresponds to a singular curve $X''$ with normalisation $X$ and a map $\pi':X''\to X'$. Since $\pi'_\ast\Oss_{X''}=\Rss$ contains $\Oss_{X'}$ with equality on $X'\setminus S'$, the map $\pi'$ is a one-sheeted covering and $\pi'_\ast\Oss_{X''}$ acts on $\Sss'$.
\end{proof}
\begin{Definition}
\label{D:middleding}
For a generalised divisor \,$\Sss'$\, on a singular curve \,$X'$, let \,$\pi_{X(\Sss')}: X(\Sss') \to X'$\, be the unique one-sheeted covering such that 
\[ (\pi_{X(\Sss')})_\ast \Oss_{X(\Sss')} = \{ f \in \Bar{\Oss}_{X'} \,|\, f\cdot g \in \Sss' \text{ for all \,$g\in \Sss'$} \} \; . \]
We call \,$X(\Sss')$\, the \emph{\,$\Sss'$-halfway normalisation} of \,$X'$\,.
\end{Definition}
By definition, \,$(\pi_{X(\Sss')})_\ast \Oss_{X(\Sss')}$\, is the largest subring of \,$\Bar{\Oss}_{X'}$\, that acts on \,$\Sss'$\,. A two-fold application of the Lemma~\ref{L:projectdivisor} to the pairs $(X',\Sss')$ and $(X'',\Sss'')$ shows that the one-sheeted coverings described in this lemma are exactly those which split the normalisation map $\pi$ into
$$X \to X(\Sss')\to X''\to X'.$$
The name ``$\Sss'$-halfway normalisation'' should remind the reader that $X(\Sss')$ is a one-sheeted covering in between the normalisation $X$ and $X'$.

In Example~\eqref{E:cusp}, the \,$\Sss$-halfway normalisation for the generalised divisor $\Sss$ is equal to the usual normalisation $X(\Sss)=X$ of \,$X'$\,, and the \,$\Sss'$-halfway normalisation for the locally free generalised divisor $\Sss'$ is $X(\Sss')=X'$. Both corresponding generalised divisors on the respective halfway normalisations constructed in Lemma~\ref{L:projectdivisor} are locally free.

Because locally free generalised divisors are better-behaved (see Remark~\ref{R:invertible}), given a pair \,$(X',\Sss')$, it is natural to look for a pair \,$(X'',\Sss'')$\, as in Lemma~ \ref{L:projectdivisor} such that \,$\Sss''$\, is locally free. The following lemma gives a necessary condition for \,$X''$\,.
 
\begin{Lemma}
\label{L:middleding-condition}
In the situation of Lemma~\ref{L:projectdivisor}, the generalised divisor \,$\Sss''$\, can be locally free only if \,$X''=X(\Sss')$\, holds. 
\end{Lemma}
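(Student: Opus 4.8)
The plan is to prove the statement directly: assuming $\Sss''$ is locally free, I will show that the sheaf $\Rss:=\pi'_\ast\Oss_{X''}$ of subrings of $\Bar{\Oss}_{X'}$ coincides with $(\pi_{X(\Sss')})_\ast\Oss_{X(\Sss')}$, whence the $1$--$1$ correspondence of Lemma~\ref{L:projectdivisor} forces $\pi'=\pi_{X(\Sss')}$, i.e.\ $X''=X(\Sss')$. Since both $\pi'$ and $\pi_{X(\Sss')}$ are biholomorphic over $X'\setminus S'$, it suffices to compare stalks at a fixed $q\in S'$. Writing $\pi'^{-1}[\{q\}]=\{p''_1,\dots,p''_m\}$, letting $p\mapsto p''_j$ abbreviate that $p\in X$ lies over $p''_j$, and setting $\Bar{\Oss}_{X'',p''_j}:=\bigoplus_{p\mapsto p''_j}\Oss_{X,p}$ and $\Mss_{X'',p''_j}:=\bigoplus_{p\mapsto p''_j}\Mss_{X,p}$, I would group the summands of~\eqref{eq:decompose-Obar} according to the point of $X''$ beneath them to get
\[
\Bar{\Oss}_{X',q}=\bigoplus_{j=1}^m\Bar{\Oss}_{X'',p''_j},\qquad
\Rss_q=\bigoplus_{j=1}^m\Oss_{X'',p''_j},
\]
and, using that $\Sss'$ is an $\Rss$-module and that the idempotents $P_{p''_j}\in\Rss_q$ from the proof of Lemma~\ref{L:projectdivisor} decompose it, the compatible splitting $\Sss'_q=\bigoplus_{j=1}^m\Sss''_{p''_j}$ in which each $\Sss''_{p''_j}=P_{p''_j}\Sss'_q$ is an $\Oss_{X'',p''_j}$-submodule of $\Mss_{X'',p''_j}$.

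The key local step, which I expect to be the main obstacle, is to show that local freeness of $\Sss''$ yields $\Sss''_{p''_j}=h_j\cdot\Oss_{X'',p''_j}$ for a germ $h_j$ that is a non-zero-divisor in the full ring $\Mss_{X'',p''_j}$, i.e.\ whose component over every $p\mapsto p''_j$ is nonzero. Injectivity of $a\mapsto a h_j$ immediately makes $h_j$ a non-zero-divisor in $\Oss_{X'',p''_j}$; to upgrade this to $\Mss_{X'',p''_j}$ I would apply Proposition~\ref{prop1}~(b) to the singular curve $X''$ at $p''_j$, obtaining $n\in\N$ with $\C+r_{p''_j}^n\subset\Oss_{X'',p''_j}$, and observe that $r_{p''_j}^n=\bigoplus_{p\mapsto p''_j}r_p^n$ (cross terms in the product ring vanish). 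A vanishing component of $h_j$ over some $p$ would then let any nonzero element of the summand $r_p^n\subset\Oss_{X'',p''_j}$ annihilate $h_j$, contradicting that $h_j$ is a non-zero-divisor in $\Oss_{X'',p''_j}$. It is precisely here that local freeness of $\Sss''$, rather than mere finite generation, enters; at a point $p''_j$ that is regular in $X''$ the argument degenerates harmlessly.

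Granting the non-zero-divisor property, the conclusion is a short computation in the decomposition above. For $f=(f_j)_j\in\Bar{\Oss}_{X',q}$ one has $f\cdot\Sss'_q\subset\Sss'_q$ if and only if $f_j\,h_j\,\Oss_{X'',p''_j}\subset h_j\,\Oss_{X'',p''_j}$ for each $j$, and cancelling the non-zero-divisor $h_j$ this amounts to $f_j\in\Oss_{X'',p''_j}$ for each $j$. Hence
\[
\bigl\{\,f\in\Bar{\Oss}_{X',q}\ \big|\ f\cdot g\in\Sss'_q\ \text{for all}\ g\in\Sss'_q\,\bigr\}=\bigoplus_{j=1}^m\Oss_{X'',p''_j}=\Rss_q,
\]
and by Definition~\ref{D:middleding} the left-hand side is the stalk of $(\pi_{X(\Sss')})_\ast\Oss_{X(\Sss')}$ at $q$. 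Since this identity holds at every $q\in S'$ and is trivial on $X'\setminus S'$, we obtain $\Rss=(\pi_{X(\Sss')})_\ast\Oss_{X(\Sss')}$, and Lemma~\ref{L:projectdivisor} gives $X''=X(\Sss')$, as claimed.
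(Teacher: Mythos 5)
Your proposal is correct and takes essentially the same route as the paper: both identify the multiplier ring \,$\{g\in\Bar{\Oss}_{X',q}\mid g\,h\in\Sss'_q \text{ for all } h\in\Sss'_q\}$\, defining the middleding with \,$(\pi'_*\Oss_{X''})_q$\, by cancelling a free local generator of \,$\Sss''$\,, and then invoke Lemma~\ref{L:projectdivisor}. The only difference is presentational: you split the stalk over the points of \,$X''$\, and prove explicitly, via Proposition~\ref{prop1}(b) and the decomposition of the radical, that the generator has non-vanishing components at all preimages (i.e.\ is a non-zero-divisor in the meromorphic stalk), a fact the paper's proof uses but leaves implicit.
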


\begin{proof}
Let \,$\pi': X'' \to X'$\, be a one-sheeted covering with a locally free generalised divisor \,$\Sss''$\, such that \,$\pi'_* \Sss'' = \Sss'$\, holds. 
Locally at \,$q\in X'$\,, there exists an $f\in \Sss'_{q}$ such that $(\pi'_*\Oss_{X''})_q \to \Sss'_q$, $g\mapsto gf$ is an isomorphism. Then, the ring $(\pi'_*\Oss_{X''})_q$ is equal to 
$$
\{ g \in \Bar{\Oss}_{X',q} \,|\, g f \in \Sss'_q \}
= 
\{ g \in \Bar{\Oss}_{X',q} \,|\, g h \in \Sss'_q \text{ for all \,$h\in \Sss'_q$} \}
 \; 
$$
since the multiplication with $f$ is an injective map from the stalk of meromorphic functions into itself.
This implies \,$(\pi_{X(\Sss')})_\ast \Oss_{X(\Sss')} =\pi'_*\Oss_{X''}$\, by Definition~\ref{D:middleding} of \,$X(\Sss')$\,. Now, Lemma~\ref{L:projectdivisor} yields that \,$X(\Sss')=X''$\,.
\end{proof}
\begin{Theorem}\label{two sheeted}
Let \,$X'$\, be a (possibly branched) two-sheeted covering above a smooth Riemann surface \,$Y$\,. 
Then, for every generalised divisor $\Sss'$ on \,$X'$, the unique sheaf \,$\Sss''$\, on the \,$\Sss'$-halfway normalisation \,$X(\Sss')$\, in Lemma~\ref{L:projectdivisor} is locally free.
\end{Theorem}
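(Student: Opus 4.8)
The plan is to pass from $(X',\Sss')$ to the middleding, where the decisive hypothesis becomes available, and then to reduce the whole statement to linear algebra over the discrete valuation ring $\C\{y\}$. Put $Z:=X(\Sss')$ and $\Tss:=\Sss''$. Since the one-sheeted covering $Z\to X'$ is birational, the composite $Z\to X'\to Y$ still exhibits $Z$ as a two-sheeted covering of the smooth surface $Y$. By Definition~\ref{D:middleding} the ring $(\pi_Z)_\ast\Oss_Z$ is the \emph{largest} subring of $\Bar{\Oss}_{X'}$ acting on $\Sss'$, and a second application of Lemma~\ref{L:projectdivisor} to the pair $(Z,\Tss)$ (using the maximality in Definition~\ref{D:middleding}) shows that for each $z\in Z$ the local ring $\Oss_{Z,z}$ is the largest subring of $\Bar{\Oss}_{Z,z}$ preserving $\Tss_z$; equivalently $X(\Tss)=Z$, so that $\{\,g\in\Bar{\Oss}_{Z,z}\mid g\Tss_z\subseteq\Tss_z\,\}=\Oss_{Z,z}$ for all $z$. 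It therefore suffices to prove the local statement: \emph{if $Z$ is a two-sheeted covering of a smooth Riemann surface and $\Tss$ is a generalised divisor on $Z$ with $\{g\in\Bar{\Oss}_{Z,z}\mid g\Tss_z\subseteq\Tss_z\}=\Oss_{Z,z}$ for every $z$, then $\Tss$ is locally free.} Freeness of $\Tss_z$ over $A:=\Oss_{Z,z}$ is automatic at regular points, so fix $z$ in the discrete singular set of $Z$.

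For the local structure of $A$, choose a local coordinate $y$ on $Y$ centred at the image of $z$, so that $\C\{y\}\hookrightarrow A$. Since $Z\to Y$ is finite of degree $2$, the normalisation $\Bar{A}=\Bar{\Oss}_{Z,z}$ — the direct sum of the $\Oss_{X,p}$ over the points $p$ of $X$ above $z$ — is a finitely generated torsion-free, hence free, $\C\{y\}$-module of rank $2$. Because $\dim_\C(\Bar{A}/A)=\delta_z<\infty$ by Proposition~\ref{prop1}~(a), the ring $A$ has $\C\{y\}$-rank $2$ as well, and being a $\C\{y\}$-submodule of the free module $\Bar{A}$ containing $1$ with torsion-free quotient $A/\C\{y\}\!\cdot\!1$, it is itself free with a basis of the shape $\{1,v\}$; the same pair is a $\C((y))$-basis of the total quotient ring $Q(A)=Q(\Bar{A})$. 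Hence every element of $A$ is uniquely $f+gv$ with $f,g\in\C\{y\}$, and therefore $y^{-1}(v-c)\notin A$ for every $c\in\C$, since $y^{-1}\notin\C\{y\}$. (One may note $A=\C\{y\}[v]$ is a complete intersection, but this is not needed.)

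Now bring in $I:=\Tss_z$, a nonzero finitely generated $A$-submodule of $Q(A)$. As $A$ is local, $I$ cannot lie inside a proper ideal of $Q(A)$ obtained by killing one of the branches over $z$ — otherwise the idempotent of $\Bar{A}$ supported on the vanishing branch would lie in $\{g\in\Bar{\Oss}_{Z,z}\mid gI\subseteq I\}=A$, contradicting that a local ring has no nontrivial idempotents. Hence $I\otimes_{\C\{y\}}\C((y))=Q(A)$, and $I$, being finitely generated and torsion-free over the principal ideal domain $\C\{y\}$, is free of rank $2$. Since $v\in A$, multiplication by $v$ is a $\C\{y\}$-linear endomorphism of $I$; fix a $\C\{y\}$-basis $(e_1,e_2)$ of $I$, let $M\in\mathrm{Mat}_2(\C\{y\})$ be its matrix and $\Bar{M}\in\mathrm{Mat}_2(\C)$ the reduction of $M$ modulo $(y)$. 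Then $\Bar{M}$ is not a scalar matrix: if $\Bar{M}=c\cdot\mathrm{Id}$ with $c\in\C$, then $M-c\cdot\mathrm{Id}\in y\,\mathrm{Mat}_2(\C\{y\})$, so $(v-c)I\subseteq yI$, so $y^{-1}(v-c)\in\{g\mid gI\subseteq I\}=A$, contradicting the previous paragraph.

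Finally, because $\Bar{M}$ is not scalar, at least one of $M_{21}$, $M_{12}$, $M_{11}-M_{22}$ is a unit of $\C\{y\}$, so the binary quadratic form $(p,q)\mapsto\det\bigl(pe_1+qe_2\mid v(pe_1+qe_2)\bigr)=M_{21}p^2+(M_{22}-M_{11})pq-M_{12}q^2$ takes a unit value at one of $(1,0),(0,1),(1,1)$. For the resulting $e\in I$ the pair $(e,ve)$ is then a $\C\{y\}$-basis of $I$, whence $I=\C\{y\}e+\C\{y\}(ve)=(\C\{y\}+\C\{y\}v)\,e=A\,e$; and $e$ is a non-zero-divisor in $Q(A)$ (otherwise $Ae=I$ would again lie in a proper ideal of the forbidden kind), so $A\to I,\ a\mapsto ae$ is an isomorphism and $\Tss_z$ is free of rank $1$ over $\Oss_{Z,z}$. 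Thus $\Sss''=\Tss$ is locally free. The main obstacle is the bookkeeping of the first two paragraphs — confirming that the middleding of $\Tss$ is again $Z$, so that the hypothesis on $\{g\mid g\Tss_z\subseteq\Tss_z\}$ is genuinely at our disposal, and extracting the normal form $A=\C\{y\}\!\cdot\!1\oplus\C\{y\}\!\cdot\!v$ from two-sheetedness; the genuinely new input is then only the impossibility of $y^{-1}(v-c)\in A$, after which everything is routine linear algebra over $\C\{y\}$.
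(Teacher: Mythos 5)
Your argument is correct in substance but takes a genuinely different route from the paper's. The paper exploits two-sheetedness through the hyperelliptic involution: it splits \,$\Bar{\Oss}_{X',q}$\, into symmetric and antisymmetric parts, derives the local normal form \,$x^2=y^n$\,, computes the stalk \,$\Sss'_q=\Oss_{X',q}f+\C\tfrac{x}{y^\ell}f+\dots+\C\tfrac{x}{y}f$\, explicitly, and reads off that the middleding is the covering whose ring is the image of \,$\C\{y,\tfrac{x}{y^\ell}\}$\,, with \,$f$\, a generator. You instead pass to the middleding abstractly and use only the maximality in Definition~\ref{D:middleding}: at a singular point \,$z$\, of \,$Z=X(\Sss')$\, the multiplier ring of \,$I=\Sss''_z$\, inside \,$\Bar{\Oss}_{Z,z}$\, is exactly \,$A=\Oss_{Z,z}$\,, two-sheetedness enters solely through \,$A=\C\{y\}\oplus\C\{y\}v$\,, and the rest is rank-two lattice linear algebra over \,$\C\{y\}$\, (non-scalarity of the reduced matrix of \,$v$\, plus a unit value of the binary quadratic form). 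This avoids the involution and the normal form altogether and isolates a clean local statement; the price is that, unlike the paper's computation, it does not exhibit the middleding or the generator explicitly.

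Three steps should be supplemented, all routine. First, and most importantly, you apply the identity \,$\{g\mid gI\subseteq I\}=A$\, to \,$g=y^{-1}(v-c)$\,, but what the middleding gives you is this identity only for \,$g\in\Bar{\Oss}_{Z,z}$\,, and it is not evident a priori that \,$y^{-1}(v-c)$\, is locally bounded. The repair is standard: multiplication by \,$g$\, is a \,$\C\{y\}$-linear endomorphism of the free rank-two lattice \,$I$\,, hence satisfies a monic quadratic over \,$\C\{y\}$\, as an endomorphism; since \,$I$\, spans \,$Q(A)$\,, it is a faithful module, so \,$g$\, itself satisfies this monic relation, is integral over \,$A$\, and therefore lies in \,$\Bar{\Oss}_{Z,z}$\, (which is the integral closure of \,$A$\, in \,$\Mss_{Z,z}$\,, see the proof of Proposition~\ref{P:covers}); only then may you invoke the maximality. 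Secondly, the claim that \,$\Bar{\Oss}_{Z,z}$\, has \,$\C\{y\}$-rank \,$2$\, at a singular point needs the one-line observation that at a point of local degree \,$1$\, over \,$Y$\, one has \,$\C\{y\}\subseteq A\subseteq\Bar{\Oss}_{Z,z}=\C\{y\}$\,, so such a point is regular. Thirdly, to pass from the exclusion of the two branch-killing ideals to \,$I$\, spanning \,$Q(A)$\, over the quotient field of \,$\C\{y\}$\,, note that this span is an ideal of \,$Q(A)$\, because \,$Q(A)$\, equals the span of \,$A$\,, and the only proper nonzero ideals of \,$Q(A)$\, are the two branch-killing ones (none at all when \,$\pi^{-1}[\{z\}]$\, is a single point). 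With these insertions the proof is complete.
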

By definition, every hyperelliptic singular curve \,$X'$\, is a branched two-sheeted covering over \,$\P$\,.
\begin{proof}
On the complement of the branch points of the two-sheeted covering \,$X' \to Y$\, and the singularities of \,$X'$, there exists a unique holomorphic involution \,$\iota$\, that interchanges any two points with the same image in \,$Y$\,. By the Riemann Extension Theorem (\cite[Theorem~3.1.15]{DJo}), \,$\iota$\, extends to an involution on the normalisation \,$X$\,. Let \,$q\in X'$\, be either a singular point or a branch point. The involution \,$\iota$\, acts on \,$\bar{\Oss}_{X',q}$\,. We decompose \,$\bar{\Oss}_{X',q}$\, into the symmetric part \,$\bar{\Oss}_{X',q}^+$\, and the anti-symmetric part $\bar{\Oss}_{X',q}^-$ with respect to \,$\iota$\,. For $f\in \Oss_{X',q}$, we write \,$f=f_++f_-$\, with \,$f_\pm \in \bar{\Oss}^\pm_{X',q}$\,. Due to \cite[Proposition~8.2]{Fo}, the functions in \,$\bar{\Oss}_{X',q}^+$\, are pullbacks of holomorphic functions on \,$Y$. Hence, \,$\bar{\Oss}_{X',q}^+ = \mathbb{C}\{y\} \subset \Oss_{X',q}$, where \,$y$\, is the pullback of a local coordinate on \,$Y$\, centered at the image of \,$q$\, in $Y$. 
In particular, $f_+\in \Oss_{X',q}$ and $f_-=f-f_+\in \Oss_{X',q}$ for $f\in \Oss_{X',q}$. Consequently, the involution $\iota$ acts on \,$\Oss_{X'}$\, and thus on $X'$. 

  The elements of \,$\bar{\Oss}_{X',q}^- \cap \Oss_{X',q}$\, vanish at \,$q$\,: If \,$\pi^{-1}[\{q\}]$\, contains only one point, then all elements of \,$\bar{\Oss}_{X',q}^-$\, vanish at \,$q$\,. If \,$\pi^{-1}[\{q\}]$\, contains two points interchanged by \,$\iota$\,, then the elements of \,$\bar{\Oss}_{X',q}^- \cap \Oss_{X',q}$\, vanish since they have the same and simultaneously the opposite value at these two points. 

Since $\delta_q<\infty$, the codimension of $\bar{\Oss}_{X',q}^- \cap \Oss_{X',q}$\,in \,$\bar{\Oss}_{X',q}^-$\, is finite. So for all \,$p\in\pi^{-1}[\{q\}]$, there exists an element of \,$\bar{\Oss}_{X',q}^- \cap \Oss_{X',q}$\, which vanishes at \,$p$\, of minimal order. Let \,$x$\, be the sum of two such elements if \,$\pi^{-1}[\{q\}]$\, contains two points. Thereby, we obtain \,$x \in \bar{\Oss}_{X',q}^- \cap \Oss_{X',q}$\, which vanishes at all \,$p\in \pi^{-1}[\{q\}]$\, of minimal order. Then for every element \,$\widetilde{x}$\, of \,$\bar{\Oss}_{X',q}^-$, the quotient \,$\widetilde{x}/x$\, is locally bounded and thus belongs to \,$\bar\Oss_{X',q}^+$\,.
Therefore, \,$\bar{\Oss}_{X',q}^- \cap \Oss_{X',q}$\, is a free \,$\bar{\Oss}_{X',q}^+$-module generated by $x$. 
Since $x^2\in \bar\Oss^+_{X',q}=\C\{y\}$, there exists an \,$n\in \mathbb{N}$\, and a unit \,$u \in \mathbb{C}\{y\}$\, with \,$x^2=y^n\cdot u$\,. The unit \,$u$\, has a square root in \,$\C\{y\}$\, and by multiplying \,$x$\, with $\sqrt{u}^{-1}$ the equation simplifies to
$$ x^2=y^n \; . $$
We only need to consider the singularities of $X'$ because everywhere else, $S'$ is locally free. Therefore, we may suppose $n\geq 2$. The cases of even $n$ and of odd $n$ are treated separately.

For $n=2m$ even, the preimage of $q$ in $X$ consists of two points. Since $\big(\frac{x}{y^m}\big)^2=1$,  $\frac{x}{y^m}=\pm 1$ generates \,$\bar{\Oss}_{X',q}$\, as a \,$\bar{\Oss}_{X',q}^+(=\C\{y\})$-module, so the mapping $\C\big\{y,\frac{x}{y^m}\big\}\to \bar{\Oss}_{X',q}$ is surjective. 

For $n=2m+1$ odd, the preimage of $q$ in $X$ consists of only one point. Then, $\bar{\Oss}_{X',q}=\C\big\{\frac{x}{y^m}\big\}$.

For every \,$p\in \pi^{-1}[\{q\}]$, let \,$f_p$\, minimize \,$\mathrm{ord}_p(\pi^*f_p)$\, in \,$\Sss_q'$\,. Then a suitable linear combination \,$f$\, of the (one or two) \,$f_p$\, minimizes \,$\sum_{p\in \pi^{-1}[\{q\}]} \mathrm{ord}_p(\pi^*f)$\, in \,$\Sss_q'$\,. In this situation, we have 
$$ f \Oss_{X',q} \subset \Sss_q' \subset f \bar{\Oss}_{X',q} \; . $$
We decompose \,$\Sss_q'$\, in \,$f \bar{\Oss}_{X',q} = f \Oss_{X',q} \oplus (f \bar{\Oss}_{X',q} / f \Oss_{X',q})$\,. Then, $\Sss_q'$ is uniquely determined by the image of $\Sss_q'$ in $f \bar\Oss_{X',q}/ f \Oss_{X',q}$.

Clearly, one has 
\[
\dim H^0(X',f\bar{\Oss}_{X',q} /f \Oss_{X',q})= \dim H^0(X',\bar{\Oss}_{X',q}/\Oss_{X',q}) = \delta_q =m \; . 
\]
with a basis of $H^0(X',\bar{\Oss}_{X',q}/\Oss_{X',q})$  given by
\[
\frac{x}{y}, \, \frac{x}{y^2}, \, \dots , \frac{x}{y^m}\;,
\]
regardless of whether \,$n$\, is even or odd.
Because of \,$\Sss_q' \subset f \bar{\Oss}_{X',q}$, there exists a greatest integer $\ell$ such that $\frac{x}{y^\ell}\cdot f\in \Sss'_q$. Then, 
	\[
	\frac{x}{y^\ell}y^k f = \frac{x}{y^{\ell-k}}f\in \Sss_q'
	\]
	and so, 
		\[
		\Sss'_q=\Oss_{X',q}f+\C\frac{x}{y^\ell}f+\C\frac{x}{y^{\ell-1}}f +\dots + \C\frac{x}{y}f \; . 
		\]
Therefore, $\Sss'_q$ is locally free with generator \,$f$\, with respect to the image of \,$\C\big\{y,\tfrac{x}{y^\ell}\big\} \to \bar{\Oss}_{X',q}$\,. The kernel of this map is generated by 
	\begin{equation*} 
		\left(\frac{x}{y^\ell}\right)^2-y^{n-2\ell} \;.
	\end{equation*}
	Hence, the $\Sss'$-halfway normalisation \,$X(\Sss')$\, is the one-sheeted covering \,$\pi':X''\to X'$\, from Lemma~\ref{L:projectdivisor} such that \,$\pi'_* \Oss_{X''}$\, is equal to the image of \,$\C\big\{y,\tfrac{x}{y^\ell}\big\} \to \bar{\Oss}_{X',q}$\, and \,$\Sss'' = f \cdot \Oss_{X''}$\, is locally free on \,$X''$\,. 
\end{proof}

\subsection{The $\Sss'$-halfway normalisation of holomorphic matrices}
Our next objective is to describe a pair \,$(X',\Sss')$\, such that the lift of \,$\Sss'$\, to the \,$\Sss'$-halfway normalisation of \,$X'$\, is not locally free. For this purpose, we introduce a general construction of pairs \,$(X',\Sss')$\,. This construction also plays a prominent role in the theory of integrable systems. For many integrable systems, the construction is used to associate to a solution of the related differential equation such a pair \,$(X',\Sss')$\, of so-called spectral data. It is a typical result in this theory that this construction yields a 1--1 correspondence between solutions and spectral data, see for example \cite{Hi,Sch}.  

Let \,$A: Y \to \C^{n\times n}$\, be a holomorphic map from a singular curve \,$Y$\, into the space of complex \,$(n\times n)$-matrices. In most applications, \,$Y$\, is either \,$\C$\, or \,$\C^*$\, and \,$A$\, extends in some sense to \,$\P$\,. The singular curve \,$X'$\, is defined as 
\begin{equation}
\label{eq:eigenvaluecurve}
X' = \, \bigr\{ \, (\upsilon,\mu) \in Y\times \C \ |\ \det\bigr(\mu\cdot \unity-A(\upsilon)\bigr)=0 \, \bigr\} \; . 
\end{equation}
We make the overall hypothesis that the zero set of the discriminant of the characteristic polynomial is discrete in \,$Y$\,. We denote the complement of this set by \,$Y_0$\,. In particular, for \,$(\upsilon,\mu)\in (Y_0\times\mathbb{C})\cap X'$, the eigenspace of \,$A(\upsilon)$\, with eigenvalue \,$\mu$\, is one-dimensional. We fix a linear form \,$\ell: \C^n \to \C$\, such that the set of those \,$(\upsilon,\mu) \in X'$\, is discrete for which the kernel of \,$\ell$\, contains non-trivial eigenvectors of \,$A(\upsilon)$\, with eigenvalue \,$\mu$\,. There exists a unique global meromorphic function \,$\psi = (\psi_1,\dotsc,\psi_n): X' \to \C^n$\, with
$$
A \cdot \psi = \mu \cdot \psi \quad \text{and} \quad 
   \ell(\psi) = 1,
$$ 
where we regard \,$A$\,, \,$\mu$\, and \,$\psi$\, as functions on \,$X'$\,. Locally, \,$\psi$\, can be obtained from any holomorphic eigenfunction \,$\varphi$\, by taking \,$\psi=\varphi/\ell(\varphi)$\,. The corresponding generalised divisor \,$\Sss'$\, on \,$X'$\, is gene- rated by \,$\psi_1,\dotsc,\psi_n$\,. If the eigenspaces of \,$A$\, define a line bundle on \,$X'$\,, then \,$\Sss'$\, describes the dual eigenline bundle in the sense of the correspondence between divisors and line bundles, see \cite[\S 29]{Fo}. Moreover, the generalised divisor generated by the entries $\Tilde{\psi}_1,\ldots,\Tilde{\psi}_n$ of the eigenfunction $\Tilde{\psi}$ normalised by another admissible linear form $\Tilde{\ell}$ is obtained from $\Sss'$ by multiplication with the global meromorphic function $\ell(\psi)/\Tilde{\ell}(\psi)=1/\Tilde{\ell}(\psi)$ of $Y$ and therefore isomorphic to $\Sss'$. Thus, the isomorphy class of $\Sss'$ does not depend on the choice of $\ell$. 
\begin{Proposition}\label{P:Phi}
\[ \Phi: (\Oss_{Y})^n\to \upsilon_\ast \Sss', \quad (f_1,\dotsc,f_n)\mapsto f_1\psi_1+\dotsc+f_n\psi_n\]
is an isomorphism of sheaves. Here, \,$\upsilon$\, denotes the projection \,$X' \to Y$\,. 
\end{Proposition}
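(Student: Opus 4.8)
The plan is to establish the claimed isomorphism stalkwise; since $\Phi$ is manifestly a morphism of sheaves and an isomorphism away from $S'$ and the discrete set where $\ell$ meets an eigenvector, it suffices to check that $\Phi_q\colon (\Oss_{Y,\upsilon(q)})^n \to (\upsilon_\ast\Sss')_q$ is bijective for $q$ over the finitely many bad points $\upsilon_0 := \upsilon(q) \in Y\setminus Y_0$, and even there injectivity and surjectivity can be argued with ranks. First I would note that over a point $\upsilon_0\in Y$, the fibre $\upsilon^{-1}[\{\upsilon_0\}]$ in $X'$ consists of the eigenvalues $\mu$ of $A(\upsilon_0)$, and $(\upsilon_\ast\Oss_{X'})_{\upsilon_0}=\bigoplus_{(\upsilon_0,\mu)\in X'}\Oss_{X',(\upsilon_0,\mu)}$ (via the normalization-type decomposition), so everything reduces to a local statement about the finitely-generated $\Oss_{Y,\upsilon_0}$-module $(\upsilon_\ast\Sss')_{\upsilon_0}$ generated by $\psi_1,\dots,\psi_n$.

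For \textbf{injectivity}: suppose $f_1\psi_1+\dots+f_n\psi_n=0$ with $f_i\in\Oss_{Y,\upsilon_0}$, i.e. $(f_1,\dots,f_n)$ as a vector-valued holomorphic function on $Y$ near $\upsilon_0$ is pointwise orthogonal to $\psi(\upsilon,\mu)$ for all points of $X'$ over a neighbourhood. On the dense open $Y_0$, the vectors $\{\psi(\upsilon,\mu) : (\upsilon,\mu)\in X',\ \upsilon \text{ fixed}\}$ are $n$ eigenvectors of $A(\upsilon)$ for its $n$ distinct eigenvalues, hence a basis of $\C^n$; so $(f_1,\dots,f_n)$ vanishes identically on $Y_0$ near $\upsilon_0$ and therefore vanishes. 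Thus $\Phi_q$ is injective for every $q$, and so $\Phi$ is injective as a sheaf map.

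For \textbf{surjectivity}: this is the step I expect to be the main obstacle, because at a bad point $\upsilon_0$ the naive count of eigenvectors degenerates and one must see that the pole orders of the $\psi_i$ — forced by the normalization $\ell(\psi)=1$ — exactly compensate. The cleanest route is to compare degrees: $\Phi$ is an injective sheaf map between two generalised divisors (after pushing forward), $(\Oss_Y)^n$ and $\upsilon_\ast\Sss'$, both of finite length quotient locally; I would show $\deg$ (or colength over $\Oss_{Y,\upsilon_0}$) of the two sides agree, which forces the cokernel of the injection $\Phi_q$ to be zero. Concretely, the colength of $\im\Phi_q$ in $(\upsilon_\ast\Sss')_{\upsilon_0}$ can be computed from the degree of the map $\psi$, and the key identity is that the sum over $(\upsilon_0,\mu)\in X'$ of the pole orders of $\psi$ at those points equals the drop in rank of the matrix $[\psi(\upsilon,\mu_1)|\cdots|\psi(\upsilon,\mu_r)]$ as $\upsilon\to\upsilon_0$ — equivalently, that $\ell(\psi)\equiv 1$ normalizes $\psi$ so that its polar divisor on $X'$ is precisely the ramification/coincidence divisor of the eigenvalue map. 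Alternatively, one can argue locally: on $Y_0$, $\Phi$ restricts to an isomorphism $(\Oss_{Y_0})^n \to \upsilon_\ast\Sss'|_{Y_0}$ because the matrix of the $\psi_i$ against a dual basis is invertible, and then one checks that the inverse $\Phi^{-1}$, a priori meromorphic across $\upsilon_0$, is in fact holomorphic there: any section $g$ of $\upsilon_\ast\Sss'$ near $\upsilon_0$ is $\sum h_i\psi_i$ with $h_i$ a priori meromorphic on $Y$, but expressing $h_i = \ell_i(g)$ for the dual linear forms and using that $g$ extends holomorphically while the $\psi_i$-expansion is governed by the (holomorphic) adjugate of $\mu\cdot\unity - A$ shows the $h_i$ have no pole. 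I would write up whichever of these two is shorter once the bookkeeping of pole orders is pinned down, but the degree-count version is likely the most robust.
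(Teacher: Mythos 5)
Your injectivity argument is sound and is essentially the paper's: a germ in the kernel pairs to zero with the $n$ linearly independent eigenvectors $\psi(\upsilon,\mu)$ at every nearby $\upsilon\in Y_0$, hence vanishes on a punctured neighbourhood and therefore vanishes. The gap is in surjectivity, which you yourself flag as the main obstacle and then leave open: neither of your two sketched routes is carried out, and in both the decisive assertion is a restatement of what has to be proven. For the degree/colength route, note first that $Y$ is in general not compact (in the applications $Y=\C$ or $\C^*$), so there is no global degree to compare; locally, the colength of $\im\Phi_{\upsilon_0}$ in $(\upsilon_\ast\Sss')_{\upsilon_0}$ is exactly the unknown quantity, and your ``key identity'' between the pole orders of $\psi$ and the rank drop of the eigenvector matrix at $\upsilon_0$ is equivalent to the surjectivity statement rather than a tool for proving it. For the meromorphic-inverse route, at a bad point the vectors $\psi(\upsilon_0,\mu)$ need not be linearly independent (in the paper's $3\times 3$ example they span only a $2$-dimensional space), so the dual linear forms $\ell_i$ exist only over $Y_0$ and are merely meromorphic across $\upsilon_0$; the claim that the coefficients $h_i=\ell_i(g)$ acquire no pole is again precisely the content of surjectivity and is asserted, not proved. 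Also, calling $(\Oss_Y)^n$ a generalised divisor is off (it has rank $n$), though that is cosmetic.

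The missing idea, and the way the paper closes this step without any pole bookkeeping, is algebraic: since $\psi_1,\dotsc,\psi_n$ generate $\Sss'$ as an $\Oss_{X'}$-module, it suffices to show that $\im\Phi$ is a $\upsilon_\ast\Oss_{X'}$-module. By the Generalised Weierstra{\ss} division theorem, $(\Oss_Y)^n\to\upsilon_\ast\Oss_{X'}$, $(f_1,\dotsc,f_n)\mapsto f_1+f_2\mu+\dotsc+f_n\mu^{n-1}$, is an isomorphism, so one only needs stability of $\im\Phi$ under multiplication by $\mu$; and this is immediate from $\mu\,\psi=A\,\psi$, because the entries of $A$ are holomorphic on $Y$, so $\mu\,\psi_j=\sum_i A_{ji}\psi_i$ stays in the $\Oss_Y$-span of $\psi_1,\dotsc,\psi_n$. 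If you want to salvage your second route, this identity $\mu^k\psi=A^k\psi$ is exactly the ``holomorphic adjugate-type'' input that replaces the dual forms: writing any local section as $\sum_j b_j\psi_j$ with $b_j\in\upsilon_\ast\Oss_{X'}$ and expanding $b_j$ in powers of $\mu$ lands you back in $\im\Phi$ with holomorphic coefficients.
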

\begin{proof}
First we prove surjectivity. Because \,$\psi_1,\dots,\psi_n$\, generates \,$\Sss'$\,, it suffices to show that the image of \,$\Phi$\, 
is a $\upsilon_\ast \Oss_{X'}$-module. Due to the Generalised Weierstra{\ss} division theorem \cite[Chapter~1, Theorem~1.19]{GPR}, the following map is an isomorphism of sheaves:
\begin{align}\label{eq:weierstrass isomorphism}
(\Oss_{Y})^n&\to\upsilon_\ast\Oss_{X'},&(f_1, \dotsc, f_n)&\mapsto f_1 + f_2\,\mu + \dotsc + f_n\,\mu^{n-1} \;.
\end{align} 
So it suffices to show that the multiplication with \,$\mu$\,, and therefore also with powers of \,$\mu$\,, acts on the image of \,$\Phi$\,. 
Due to $A\psi=\mu\psi$, multiplication with $\mu$ acts as matrix multiplication with $A$ on the column vector \,$\psi$\,. The entries of $A$ are holomorphic on $Y$, so the image of \,$\Phi$\, is a $\upsilon_\ast \Oss_{X'}$-module.

Now we show injectivity. We first consider \,$\Phi$\, at \,$y\in Y_0$. Then, \,$A(y)$\, has \,$n$\, pairwise distinct eigenvalues and \,$n$\, linearly independent eigenvectors. So the determinant of the eigenvectors is non-zero in a neighborhood of \,$y$\, and \,$\Phi$\, is injective on the stalk over \,$y$\,.

It follows that \,$\Phi$\, can be non-injective only on the stalks over the discrete set \,$Y\setminus Y_0$\,. On the other hand, every element in the kernel of \,$\Phi$\, is a germ of holomorphic functions, so the set of stalks on which \,$\Phi$\, is non-injective is open and therefore empty.
\end{proof}
Let \,$\Oss(A)$\, be the sheaf of holomorphic \,$(n\times n)$-matrices on \,$Y$\, which commute with \,$A$\,. Let \,$U\subset Y$\, be open and \,$B \in H^0(U,\Oss(A)_U)$\,. Since \,$B$\, commutes with \,$A|_U$\,, we can diagonalize \,$A|_U$\, and \,$B$\, simultaneously. Consequently, the eigenvalues of \,$B|_{U\cap Y_0}$\, define holomorphic functions on \,$\upsilon^{-1}[U\cap Y_0]$\,. These functions are locally bounded near the points of \,$U \cap (Y \setminus Y_0)$\,, and therefore extend to holomorphic functions on \,$\pi^{-1}[\upsilon^{-1}[U]] \subset X$\, by the Riemann Extension Theorem. This defines the natural homomorphism of rings 
\begin{equation}\label{eq:evhomo}
\Oss(A) \to \upsilon_* \bar{\Oss}_{X'} \; .
\end{equation}
\begin{Corollary}\label{commuting matrices}
The image of \eqref{eq:evhomo} is \,$\upsilon_*(\pi_{X(\Sss')})_\ast \Oss_{X(\Sss')}$\,. 
\end{Corollary}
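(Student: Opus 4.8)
The plan is to prove the asserted equality of subsheaves of $\upsilon_\ast\bar{\Oss}_{X'}$ by establishing the two inclusions, using Proposition~\ref{P:Phi} together with the description of the middleding from Definition~\ref{D:middleding}: by that definition $\upsilon_\ast(\pi_{X(\Sss')})_\ast\Oss_{X(\Sss')}$ consists precisely of those $b\in\upsilon_\ast\bar{\Oss}_{X'}$ for which multiplication by $b$ maps $\Sss'$ into itself. Throughout I regard $\psi=(\psi_1,\dots,\psi_n)$ as a column vector of meromorphic functions on $X'$ with $A\psi=\mu\psi$, and I use that the entries of $A$, and more generally of any $B\in\Oss(A)$, are pullbacks of holomorphic functions on $Y$ and hence, being scalar-valued, commute with every element of $\bar{\Oss}_{X'}$. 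Since both sides are subsheaves of $\upsilon_\ast\bar{\Oss}_{X'}$, the inclusions may be verified on stalks over $Y$, and the arguments below go through verbatim locally.

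For the inclusion $\subseteq$, let $B\in\Oss(A)$ and let $b$ be its image under~\eqref{eq:evhomo}. Over $Y_0$ the matrix $A$ has simple spectrum with the values of $\psi$ spanning its eigenspaces, and $B$ commutes with $A$, so $B$ preserves these one-dimensional eigenspaces; hence $\psi$ is an eigenvector of $B$, and by the very construction of~\eqref{eq:evhomo} the corresponding eigenvalue is $b$. Thus $B\psi=b\psi$ holds over $\upsilon^{-1}[Y_0]$, and since both sides are meromorphic on $X'$ and agree on the dense open set $\upsilon^{-1}[Y_0]$, the identity $B\psi=b\psi$ holds on all of $X'$, i.e. $b\,\psi_i=\sum_j B_{ij}\psi_j$ for all $i$. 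As the $B_{ij}$ pull back into $\Oss_{X'}$ and $\Sss'$ is the $\Oss_{X'}$-module generated by $\psi_1,\dots,\psi_n$, this gives $b\,\psi_i\in\Sss'$, hence $b\cdot\Sss'\subseteq\Sss'$, and therefore $b\in\upsilon_\ast(\pi_{X(\Sss')})_\ast\Oss_{X(\Sss')}$ by Definition~\ref{D:middleding}.

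For the reverse inclusion $\supseteq$, let $b\in\upsilon_\ast\bar{\Oss}_{X'}$ with $b\cdot\Sss'\subseteq\Sss'$. Then each $b\,\psi_i$ lies in $\upsilon_\ast\Sss'$, so applying the inverse of the isomorphism $\Phi$ from Proposition~\ref{P:Phi} yields $B_{i1},\dots,B_{in}\in\Oss_Y$ with $b\,\psi_i=\sum_j B_{ij}\psi_j$; set $B=(B_{ij})$, a holomorphic matrix on $Y$, so that $B\psi=b\psi$. Two things remain. First, $B\in\Oss(A)$: using $A\psi=\mu\psi$, that the scalar function $b$ commutes with $A$, and that $\mu$ commutes with the entries of $B$, one computes entrywise $(AB-BA)\psi = b\mu\psi-\mu b\psi = 0$; over each $y\in Y_0$ the $n$ vectors obtained by evaluating $\psi$ at the $n$ points of $\upsilon^{-1}[\{y\}]$ are linearly independent, as they are eigenvectors of $A(y)$ for distinct eigenvalues (this is the linear independence already exploited in the proof of Proposition~\ref{P:Phi}), so $(AB-BA)(y)=0$ on $Y_0$, and by analytic continuation $AB=BA$ everywhere. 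Second, the image of $B$ under~\eqref{eq:evhomo} is $b$: over $Y_0$ the identity $B\psi=b\psi$ exhibits $b$ as the eigenvalue of $B$ along the eigenspaces of $A$, which is exactly the function~\eqref{eq:evhomo} assigns to $B$ after holomorphic extension to the normalisation; since $b$ already lies in $\bar{\Oss}_{X'}$ and agrees with this extension on the dense open set $\upsilon^{-1}[Y_0]$, the two coincide. Hence $b$ lies in the image of~\eqref{eq:evhomo}.

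I expect the second inclusion to be the main obstacle, and within it the bookkeeping identifying the function $b$ extracted via $\Phi^{-1}$ with the image of $B$ under the eigenvalue homomorphism~\eqref{eq:evhomo}: one must keep track of the fact that~\eqref{eq:evhomo} is defined by holomorphically extending an eigenvalue function off the discrete locus $Y\setminus Y_0$, and invoke uniqueness of meromorphic continuation on $X'$ (respectively holomorphic continuation on the normalisation) to conclude that this extension is the given $b$. The commutation $AB=BA$ is then a short linear-algebra computation, contingent only on the linear independence of the eigenvectors over $Y_0$, which is already available from the proof of Proposition~\ref{P:Phi}.
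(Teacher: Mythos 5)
Your proof is correct and follows essentially the same route as the paper: the forward inclusion uses that any $B\in\Oss(A)$ acts on $\psi$ as multiplication by its eigenvalue function, hence the image of~\eqref{eq:evhomo} acts on $\Sss'$, and the reverse inclusion uses Proposition~\ref{P:Phi} to write $b\,\psi$ as $B\psi$ with $B$ holomorphic on $Y$, then checks $B\in\Oss(A)$ and that~\eqref{eq:evhomo} sends $B$ to $b$. Your verification of $AB=BA$ by evaluating on the eigenvectors over $Y_0$ is just a spelled-out version of the paper's appeal to the commutativity of multiplication by $\nu$ and $\mu$ together with Proposition~\ref{P:Phi}, so there is no substantive difference.
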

In particular, the eigenvalues of an element of \,$\Oss(A)$\, define holomorphic functions on the \,$\Sss'$-halfway normalisation and vice versa, the holomorphic functions on the \,$\Sss'$-halfway normalisation are eigenvalues of elements of $\Oss(A)$.
\begin{proof}
Matrix multiplication with an element of \,$\Oss(A)$\, acts on $\psi$ like multiplication with the corresponding image in \eqref{eq:evhomo}. This shows that not only $\mu$ and $\upsilon_*\Oss_{X'}$, but the whole image of \eqref{eq:evhomo} acts on \,$\upsilon_*\Sss'$\,. So the image of \eqref{eq:evhomo} is contained in \,$\upsilon_*(\pi_{X(\Sss')})_\ast \Oss_{X(\Sss')}$\,. 

Conversely, for \,$\nu \in \upsilon_*(\pi_{X(\Sss')})_\ast \Oss_{X(\Sss')}$, the entries of \,$\nu\cdot \psi$\, are due to Proposition~\ref{P:Phi} linear combinations of \,$\psi_1,\dotsc,\psi_n$\, with coefficients in \,$\Oss_Y$\,. All these coefficients together define a holomorphic \,$(n\times n)$-matrix on \,$Y$\,. Because multiplication with \,$\nu$\, commutes with multiplication with \,$\mu$\,, this matrix commutes with \,$A$\, and belongs to \,$\Oss(A)$\,. It is mapped to \,$\mu$\, by \eqref{eq:evhomo}. This shows that the image of \eqref{eq:evhomo} contains \,$\upsilon_*(\pi_{X(\Sss')})_\ast \Oss_{X(\Sss')}$\,. 
\end{proof}
Next, we give an example of a generalised divisor \,$\Sss'$\, such that the corresponding generalised divisor on the \,$\Sss'$-halfway normalisation is not locally free. Hitchin defines in~\cite[Section~5]{Hi} the spectral data of a harmonic map from a 2-dimensional torus to $\mathbb{S}^3$. They are defined in terms of the holonomy representation of a family of flat connections. 
The harmonic map gives rise to such a family depending holomorphically on $\lambda\in\C^\ast$. The corresponding holonomy is a representation of the abelian fundamental group of the torus with values in the $2\times 2$-matrices depending holomorphically on \,$\lambda\in\C^*$\,. Hitchin defines the spectral curve as the unique singular curve such that the eigenspace bundle \cite[Section~7]{Hi} is locally free. For simplicity, we do not discuss the remarkable result \cite[Proposition~3.9]{Hi} that this spectral curve can be compactified by adding two smooth points at $\lambda=0$ and $\lambda=\infty$. Note that locally free generalised divisors are invertible sheaves, and a generalised divisor is locally free if it is the inverse of an invertible sheaf.
Hence, this spectral curve is the \,$\Sss'$-halfway normalisation of the holomorphic matrix given by one of the commuting holonomies, see Lemma~\ref{L:middleding-condition}. Moreover, since all holonomies commute, they define the same sheaf of commuting holomorphic matrices and therefore the same \,$\Sss'$-halfway normalisation due to~Corollary~\ref{commuting matrices}. Since the spectral curves of all these holonomies are hyperelliptic, Theorem~\ref{two sheeted} yields that the corresponding generalised divisors are locally free on the \,$\Sss'$-halfway normalisation. Therefore, the spectral data correspondence of Hitchin associates  to a harmonic map the pair $(X(\Sss'),\Sss'')$ which is constructed from one of the commuting holonomies. Here, $\Sss''$ is the corresponding locally free generalised divisor on the \,$\Sss'$-halfway normalisation $X(\Sss')$ in Lemma~\ref{L:projectdivisor}. We remark that not only the \,$\Sss'$-halfway normalisation of \,$X'$\,, but also the corresponding generalised divisor $\Sss''$ does not depend on the choice of the holonomy out of the family of commuting holonomies since the eigenfunction $\psi$ diagonalizes all these commuting holonomies simultaneously.

The following example shows that for integrable systems whose spectral curves are not hyperelliptic, such pairs of spectral curves with locally free divisors do not exist in general. However, since the statement of Theorem~\ref{two sheeted} is local, for all solutions of an integrable system with a holomorphic matrix (holonomy) whose eigenvalue curve has only simple branch points connecting only two sheets, the corresponding generalised divisor \,$\Sss''$\, is locally free on the \,$\Sss'$-halfway normalisation. This should be the generic situation and such pairs $(X(\Sss'),\Sss'')$ with locally free $\Sss''$ will exist in the generic case. In particular, a holomorphic matrix whose generalised divisor $\Sss''$ is not locally free on the \,$\Sss'$-halfway normalisation $X(\Sss')$ is at least a $3\times 3$-matrix. Furthermore, the eigenvalue curve of such a matrix has a singular point which is a branch point connecting at least three sheets. Now, it is not difficult to give an example.
\begin{Example}\cite[Example~9.3]{Sch}
Consider for \,$\lambda \in \C = Y$\, the holomorphic matrix
	\[
	A(\lambda) = \begin{pmatrix}
	\lambda &  0 & a \\
	0 &  0 & b\\
	0& 0& -\lambda
	\end{pmatrix} \;  
	\]
with fixed \,$a,b\in \C$\,. The corresponding eigenvalue curve as in Equation~\eqref{eq:eigenvaluecurve} is given by
$$ X' = \left\{ \left. \, (\lambda,\mu) \in \C^2 \,\right|\, (\mu-\lambda)\mu(\mu+\lambda)=0 \, \right\} \; . $$
The normalisation of \,$X'$\, is 
\begin{align*}
	X & = \left\{ \left. \, (\lambda,\mu)  \,\right|\, \mu=\lambda\, \right\}
	\;\dot{\cup}\;
	\left\{ \left. \, (\lambda,\mu)  \,\right|\, \mu=0 \, \right\}
	\;\dot{\cup}\;
	\left\{ \left. \, (\lambda,\mu) \,\right|\, \mu=-\lambda \, \right\}
	\\
	& \cong \C \;\dot{\cup}\; \C \;\dot{\cup}\; \C \; .
\end{align*}
The three copies of \,$\C$\, intersect in \,$X'$\, in the triple point $q$ with \,$(\lambda,\mu) = (0,0)$ which is the only singularity of \,$X'$\,.  
Therefore, \,$\bar{\Oss}_{X',q} \cong (\C\{\lambda\})^3$\,. Then, \,$\Oss_{X',q}$\, is the image of the algebra homomorphism \,$\C\{\lambda,\mu\} \to (\C\{\lambda\})^3$\, with \,$1 \mapsto (1,1,1)$\,, \,$\lambda \mapsto (\lambda,\lambda,\lambda)$\, and \,$\mu \mapsto (\lambda,0,-\lambda)$\,. We have \,$\lambda^2 \mapsto (\lambda^2,\lambda^2,\lambda^2)$\,, \,$\lambda\mu \mapsto (\lambda^2,0,-\lambda^2)$\, and \,$\mu^2 \mapsto (\lambda^2,0,\lambda^2)$\,, and thus all monomials of degree \,$2$\, in \,$(\C\{\lambda\})^3$\, belong to the image. Moreover, by multiplication with powers of \,$\lambda$, all monomials of higher degree are obtained in the image. 
Hence, the cokernel in \,$(\C\{\lambda\})^3$\, of this homomorphism is spanned by \,$(1,0,0)$\,, \,$(0,1,0)$\, and \,$(0,\lambda,0)$\, and \,$\delta_q=3$\,. 

For the construction of the eigenline bundle, we choose \,$\ell(\psi)=\psi_1+\psi_2+\psi_3$\,. We will see that this choice of \,$\ell$\, is admissible, unlike the simpler choices \,$\ell(\psi)=\psi_k$\, for \,$k\in \{1,2,3\}$\,.
Then, \,$\psi$\, is the solution of the following linear system of equations:
\begin{align}
	\label{eq:I}
		\lambda\psi_1 + a \psi_3&=\mu\psi_1 \\ 
	\label{eq:II}
		b\psi_3&=\mu\psi_2\\
	\label{eq:III}
		-\lambda \psi_3& = \mu\psi_3\\
	\label{eq:IV}
		\psi_1+\psi_2+\psi_3&=1 \; . 
\end{align}
We now calculate the solution \,$\psi$\, individually on the three components of the normalisation \,$X$\,: 

\noindent
For $\mu=\lambda$, one gets from \eqref{eq:I} that $a\psi_3=0$ and $\psi_3=0$ for \,$a\neq 0$\,. So \eqref{eq:II} implies $\mu\psi_2=0$ and thus \,$\psi_2=0$\,. By \eqref{eq:IV}, we obtain $\psi_1=1$, so $\psi=(1,0,0)^t$\,. This excludes the linear forms \,$\ell(\psi)=\psi_k$\, for \,$k\in \{2,3\}$\, from being admissible.

\noindent
For $\mu=0$, \eqref{eq:III} gives $-\lambda \psi_3=0$ and hence  $\psi_3=0$. Inserting this into \eqref{eq:I} gives $\psi_1=0$ and \eqref{eq:IV} yields $\psi_2=1$. So \,$\psi=(0,1,0)^t$\, which excludes the linear forms \,$\ell(\psi)=\psi_k$\, for \,$k\in \{1,3\}$\, from being admissible.

\noindent
For $\mu=-\lambda$, \eqref{eq:I} reads as $2\lambda \psi_1=-a\psi_3$ and \eqref{eq:II} reads as $b\psi_3=-\lambda\psi_2$, hence
		\[
		\psi_1=-\frac{a}{2\lambda}\psi_3 \quad \mbox{ and } \quad \psi_2=-\frac{b}{\lambda}\psi_3.
		\]
		Then \eqref{eq:IV} yields $\psi_3\left(-\frac{a}{2\lambda}-\frac{b}{\lambda}+1\right)=1$ and therefore
\begin{align*}
\psi&=\left(-\frac{a}{2\lambda},-\frac{b}{\lambda},1\right)^t\left(-\frac{a}{2\lambda}-\frac{b}{\lambda}+1\right)^{-1}=\frac{(-a,-2b,2\lambda)^t}{2\lambda-a-2b}.
\end{align*}
Hence, $\psi$ has a pole at $\lambda=\frac{a+2b}{2}$ which is for \,$a \neq -2b$\, a point in the regular set of \,$X'$\,. 
		
So we have determined the generators \,$\psi_1,\psi_2,\psi_3$\, of the generalised divisor \,$\Sss'$\, associated to the holomorphic matrix \,$A$\,. Our next objective is to show that the corresponding generalised divisor on the \,$\Sss'$-halfway normalisation is not locally free. For this purpose, we extend the isomorphism~\eqref{eq:weierstrass isomorphism} to
\begin{align*}
(\Mss_{Y})^n&\to\upsilon_\ast\Mss_{X'},&(f_1, \dotsc, f_n)&\mapsto f_1 + f_2\,\mu + \dotsc + f_n\,\mu^{n-1} \;.
\end{align*}
Since the product of any meromorphic function on $X'$ with the pullback of an appropriate function on $Y$ becomes locally holomorphic, the arguments in the proof of Proposition~\ref{P:Phi} extend to the meromorphic functions and show surjectivity and injectivity of this sheaf homomorphism. Thus, due to Corollary~\ref{commuting matrices}, in our example \,$\lambda_*(\pi_{X(\Sss')})_\ast \Oss_{X(\Sss')}$\, is equal to
$$\{f_1+f_2\mu+f_3\mu^2\mid(f_1,f_2,f_3)\in\Mss_{\mathbb{C}}^3\mbox{ with }f_1\unity+f_2A+f_3A^2\in\Oss_\mathbb{C}^{n\times n}\}.$$
To determine such $(f_1,f_2,f_3)\in\Mss_{\mathbb{C},0}^3$, first insert
\[f_1\unity+f_2A+f_3A^2=\begin{pmatrix}
f_1+f_2\lambda+f_3\lambda^2 & 0 & f_2a\\
0 & f_1 & f_2b-f_3b\lambda\\
0 & 0 & f_1-f_2\lambda+f_3\lambda^2
\end{pmatrix}.
\]
This implies for $a\neq 0$ that $af_2$ and $f_2$ are holomorphic. If in addition $b\neq 0$, then $b\lambda f_3$ and $\lambda f_3$ are holomorphic. Considering the middle entry in the matrices of the above equation implies that also $f_1$ is holomorphic. Hence, the stalk of $(\pi_{X(\Sss')})_\ast \Oss_{X(\Sss')}$ at $q$ is the image of
\[
\C\left\{\lambda, \mu,\mu^2\lambda^{-1}\right\}\to\Bar{\Oss}_{X',q}.
\]
For any generalised divisor $\Sss'$ whose stalk at $q$ is contained in $\Bar{\Oss}_{X',q}$, the evaluation at the three points in $\pi^{-1}[\{q\}]$ defines a linear map $\Sss'_q\to\mathbb{C}^3$. Since all generators of $\mathbb{C}\{\lambda,\mu,\mu^2\lambda^{-1}\}$ vanish at $q$, the image of this map is at most one-dimensional if the divisor is locally free with respect to $\mathbb{C}\{\lambda,\mu,\mu^2\lambda^{-1}\}$. For $a\ne0$, $b\ne0$ and $a+2b\ne 0$, the values
\begin{equation*}
\psi=\begin{cases}(1,0,0)^t&\mbox{ at }(\lambda,\mu)=(0,0)\mbox{ in }\mu=\lambda\\(0,1,0)^t&\mbox{ at }(\lambda,\mu)=(0,0)\mbox{ in }\mu=0\\(\tfrac{-a}{-a-2b},\tfrac{-2b}{-a-2b},0)^t&\mbox{ at }(\lambda,\mu)=(0,0)\mbox{ in }\mu=-\lambda\end{cases}
\end{equation*}
span a 2-dimensional space. We are not considering $a=0$ or $b=0$ here. In these cases, one can determine the corresponding sheaf $\Oss(A)$ by the same methods, but the ring is larger and the calculations are more involved. For $a\ne0$, $b\ne0$ and $a+2b\ne0$, the corresponding generalised divisor $\Sss''$ on the \,$\Sss'$-halfway normalisation $X(\Sss')$ is not locally free.
\end{Example}
This example might be considered somewhat pathological since the normalisation has three connected components. But all arguments only depend on the local behaviour of $A$ in a neighborhood of the triple point. More precisely, all arguments carry over to triple points where the holomorphic matrix in the neighborhood of this point differs from $A$ by a matrix with a second order root.
\section{Riemann Roch}\label{se:riemann roch}
From now on, we suppose that $X'$ is compact, or equivalently that $X$ is compact, unless specified otherwise. Let $g=\dim H^1(X,\mathcal{O}_X)<\infty$ be the genus of $X$ which is called the geometric genus of $X'$ and define the $\delta$-invariant of $X'$ as
\[
\delta:=\sum_{q\in S'}\delta_q\;.
\]
Since $S'$ is discrete, one has $\delta<\infty$.

\begin{Lemma} 
\label{L:dimH}
\strut

\vspace{-1ex} 

\begin{itemize}
\item[(a)]$\dim H^0(X,\mathcal{O}_X)=\dim H^0(X',\bar{\mathcal{O}}_{X'})$, \\
 $\dim H^1(X,\mathcal{O}_X)=\dim H^1(X',\bar{\mathcal{O}}_{X'})$.
\item[(b)]$\dim H^1(X',\mathcal{O}_{X'})=g+\delta<\infty$.
This number is called arithmetic genus of $X'$ and is denoted as $g'$.
\end{itemize}
\end{Lemma}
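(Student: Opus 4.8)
The plan is to derive (a) from the fact that the normalisation map $\pi\colon X\to X'$ is finite, and then (b) from the short exact sequence comparing $\Oss_{X'}$ with $\Bar{\Oss}_{X'}=\pi_\ast\Oss_X$, using (a). For (a), the point is that $\pi$ is proper with finite fibres, so for every sheaf $\Fss$ on $X$ and every $q\in X'$ one has $(\pi_\ast\Fss)_q=\bigoplus_{p\in\pi^{-1}[\{q\}]}\Fss_p$: one shrinks a neighbourhood $U$ of $q$ until $\pi^{-1}[U]$ is a disjoint union of neighbourhoods of the points of $\pi^{-1}[\{q\}]$, which is possible by properness. This is exactly the reasoning behind \eqref{eq:decompose-Obar}. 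Since exactness of a sequence of sheaves is tested on stalks, $\pi_\ast$ is an exact functor, hence $R^i\pi_\ast=0$ for $i\geq1$; equivalently, since $\pi_\ast$ carries flasque sheaves to flasque sheaves, applying $\pi_\ast$ to a flasque resolution of $\Oss_X$ produces a flasque resolution of $\Bar{\Oss}_{X'}$ with the same spaces of global sections. Hence $H^i(X',\Bar{\Oss}_{X'})\cong H^i(X,\Oss_X)$ for all $i$, and the cases $i=0,1$ give (a).

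For (b), I would use the short exact sequence of sheaves on $X'$
\[ 0\longrightarrow\Oss_{X'}\longrightarrow\Bar{\Oss}_{X'}\longrightarrow\Qss\longrightarrow0,\qquad \Qss:=\Bar{\Oss}_{X'}/\Oss_{X'}. \]
The sheaf $\Qss$ is supported on $S'$, which is finite because it is discrete in the compact space $X'$, and its stalk at $q$ is $\Bar{\Oss}_q/\Oss_q$, of finite dimension $\delta_q$ by Proposition~\ref{prop1}(a). Being a finite sum of skyscraper sheaves, $\Qss$ satisfies $\dim H^0(X',\Qss)=\sum_{q\in S'}\delta_q=\delta$ and $H^1(X',\Qss)=0$. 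In the associated long exact cohomology sequence, $\dim H^0(X',\Bar{\Oss}_{X'})=\dim H^0(X,\Oss_X)=1$ since $X$ is a compact, hence connected, Riemann surface, and $H^0(X',\Oss_{X'})$ is a subspace of $H^0(X',\Bar{\Oss}_{X'})$ containing the constants; therefore $H^0(X',\Oss_{X'})\hookrightarrow H^0(X',\Bar{\Oss}_{X'})$ is an isomorphism. The long exact sequence thus collapses to
\[ 0\longrightarrow H^0(X',\Qss)\longrightarrow H^1(X',\Oss_{X'})\longrightarrow H^1(X',\Bar{\Oss}_{X'})\longrightarrow0, \]
and taking dimensions, together with $\dim H^1(X',\Bar{\Oss}_{X'})=\dim H^1(X,\Oss_X)=g$ from (a), gives $\dim H^1(X',\Oss_{X'})=\delta+g$. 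Finiteness is automatic since $g<\infty$ and $\delta<\infty$.

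The steps I expect to require the most care are the acyclicity of $\pi_\ast$ in the analytic setting in part (a) — that $\pi$ finite forces $R^i\pi_\ast\Oss_X=0$ — and the observation in (b) that the global-section map $H^0(X',\Oss_{X'})\to H^0(X',\Bar{\Oss}_{X'})$ is bijective, which silently relies on $X$ being connected (without this the formula would read $g+\delta+1-c$, with $c$ the number of connected components of $X$). The rest is routine long-exact-sequence bookkeeping.
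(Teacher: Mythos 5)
Your proof is correct. Part (b) is essentially the paper's own argument: the same short exact sequence \,$0\to\Oss_{X'}\to\Bar{\Oss}_{X'}\to\Bar{\Oss}_{X'}/\Oss_{X'}\to0$\,, vanishing of \,$H^1$\, of the skyscraper quotient with \,$\dim H^0=\delta$\,, and the identification of the global sections of \,$\Oss_{X'}$\, and \,$\Bar{\Oss}_{X'}$\, with the constants; your explicit remark that this last step uses connectedness of \,$X$\, (otherwise one gets \,$g+\delta+1-c$\,) makes precise something the paper leaves implicit. Part (a), however, takes a genuinely different route. The paper stays inside the \v{C}ech framework of \cite{Fo}: it notes \,$H^q(\mathcal{U}',\Bar{\Oss}_{X'})=H^q(\pi^{-1}[\mathcal{U}'],\Oss_X)$\, for every covering \,$\mathcal{U}'$\,, chooses coverings of \,$X'$\, by non-compact open sets so that the pulled-back covering is a Leray covering of \,$X$\, (\cite[Theorems~12.8 and 26.1]{Fo}), and then passes to the inductive limit over coverings. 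You instead use finiteness of \,$\pi$\, to get \,$(\pi_\ast\Fss)_q=\bigoplus_{p\in\pi^{-1}[\{q\}]}\Fss_p$\, for every sheaf \,$\Fss$\, --- the computation behind \eqref{eq:decompose-Obar}, valid here because \,$\pi$\, is proper with finite fibres --- hence \,$\pi_\ast$\, is exact; together with preservation of flasqueness, pushing forward a flasque resolution of \,$\Oss_X$\, gives \,$H^i(X',\Bar{\Oss}_{X'})\cong H^i(X,\Oss_X)$\, in all degrees (the degenerate Leray spectral sequence for a finite map). Your version is shorter and more general, applying to any sheaf on \,$X$\, and all \,$i$\, without the limit over coverings; its only extra foundational debt is that it computes flasque-resolution (derived functor) cohomology, whereas the paper works with \v{C}ech cohomology, so one should add the one-line remark that the two agree in degrees \,$0$\, and \,$1$\, (automatic for \,$H^1$\,, and in any case \,$X'$\, is paracompact). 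With that remark included, your argument is a complete and valid substitute for the paper's proof of (a).
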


\begin{proof}
\emph{(a)}
Since $\bar{\mathcal{O}}_{X'}=\pi_*\mathcal{O}_X$, we have $H^0(U',\bar{\mathcal{O}}_{X'})=H^0(\pi^1[U'],\mathcal{O}_X)$ for every open subset $U'\subset X'$. By definition of the \v{C}ech complex, this implies $H^q(\mathcal{U}',\bar{\mathcal{O}}_{X'})=H^q(\pi^{-1}[\mathcal{U}'],\mathcal{O}_X)$ for every open covering $\mathcal{U}'$ of $X'$ and $q\in\{0,1\}$. Here, $\pi^{-1}[\mathcal{U}']$ denotes the covering $X=\bigcup_{U'\in\mathcal{U}'}\pi^{-1}[U']$. Let all $U'\in\mathcal{U}'$ be non-compact. Then, all $\pi^{-1}[U']\in\pi^{-1}[\mathcal{U}']$ are also non-compact and thus, $\pi^{-1}[\mathcal{U}']$ is a Leray covering, see \cite[Theorem~12.8 and Theorem~26.1]{Fo}. Hence, we have $H^q(X,\mathcal{O}_X)=H^q(\pi^{-1}[\mathcal{U}'],\mathcal{O}_X)$. Because $\mathcal{U}'\mapsto\pi^{-1}[\mathcal{U}']$ embeds the ordered set of coverings of $X'$ into the ordered set of coverings of $X$, the inductive limit of $H^q(\mathcal{U}',\bar{\mathcal{O}}_{X'})$ is a sublimit of the inductive limit of $H^q(\pi^{-1}[\mathcal{U}'],\mathcal{O}_X)$. So the latter inductive limit is equal to $H^q(\pi^{-1}[\mathcal{U}'],\mathcal{O}_X)$ for any Leray covering and therefore also the former inductive limit.

\emph{(b)}
The sequence of sheaves 
\[
0\longrightarrow\mathcal{O}_{X'}\longrightarrow\bar{\mathcal{O}}_{X'}\longrightarrow \bar{\mathcal{O}}_{X'}/\mathcal{O}_{X'}\longrightarrow 0
\]
is exact. So we have the alternating sum of dimensions
\begin{multline}\label{eq:Hq-alternating-sum}
\dim H^1(X',\mathcal{O}_{X'}) = \dim H^0(X',\mathcal{O}_{X'}) \, - \\ - \dim H^0(X',\bar{\mathcal{O}}_{X'}) 
 + \dim H^0(X',\bar{\mathcal{O}}_{X'}/\mathcal{O}_{X'}) + \\ + \dim H^1(X',\bar{\mathcal{O}}_{X'})-\dim H^1(X',\bar{\mathcal{O}}_{X'}/\mathcal{O}_{X'}) \; .  
\end{multline}
Due to (a), $\dim H^0(X',\bar{\mathcal{O}}_{X'})=1$ and $\dim H^1(X',\bar{\mathcal{O}}_{X'})=g$, and Proposition~\ref{prop1}~(a) yields that $\dim H^0(X',\bar{\mathcal{O}}_{X'}/\mathcal{O}_{X'})=\delta$. 
The support of $\bar{\mathcal{O}}_{X'}/\mathcal{O}_{X'}$ is discrete, see Proposition~\ref{prop3}, so $H^1(X',\bar{\mathcal{O}}_{X'}/\mathcal{O}_{X'})=0$.

$\mathcal{O}_{X'}$ is a subsheaf of $\bar{\mathcal{O}}_{X'}$ containing the constant functions, so we have $\dim H^0(X',\mathcal{O}_{X'})=\dim H^0(X',\bar{\mathcal{O}}_{X'})=\dim H^0(X,\mathcal{O}_X)=1$. Inserting all this into \eqref{eq:Hq-alternating-sum} gives the assertion.
\end{proof}
The Riemann Roch theorem now follows easily from the definition of the degree of a generalised divisor. 
\begin{Theorem}
\label{T:riemannroch}
For every generalised divisor $\Sss$ on $X'$ one has
\[
\dim H^0(X',\Sss)-\dim H^1(X',\Sss)=\deg \Sss +1 - g' \;.
\]
\end{Theorem}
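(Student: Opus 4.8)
The plan is to reduce the statement to the two short exact sequences relating $\Sss$ and $\Oss_{X'}$ to a common enveloping divisor, and then to read off the formula from the definition of $\deg\Sss$ together with Lemma~\ref{L:dimH}. First I would observe that, since $X'$ is compact and $\supp(\Sss)$ is discrete by Proposition~\ref{prop3}, the support of $\Sss$ is finite; the same is trivially true for $\Oss_{X'}$. Hence by \eqref{eq:smallest-common-divisor} there is a generalised divisor $\Sss'$ with finite support containing both $\Sss$ and $\Oss_{X'}$, and the quotient sheaves $\Sss'/\Sss$ and $\Sss'/\Oss_{X'}$ are supported on a finite set. I would also record once and for all that every cohomology group appearing is finite-dimensional: $\Sss'$ is coherent (locally it becomes an ideal sheaf after multiplication by a suitable holomorphic function), so $\dim H^j(X',\Sss')<\infty$ by finiteness of coherent cohomology on a compact complex space, while $\Sss'/\Sss$ and $\Sss'/\Oss_{X'}$ are coherent with finite support, so their $H^0$ is finite-dimensional and their $H^1$ vanishes by the same argument already used in the proof that $\deg$ is well defined (cf. \cite[16.7~Lemma]{Fo}).

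Next I would write down the long exact cohomology sequences of
\[
0\longrightarrow\Sss\longrightarrow\Sss'\longrightarrow\Sss'/\Sss\longrightarrow 0
\qquad\text{and}\qquad
0\longrightarrow\Oss_{X'}\longrightarrow\Sss'\longrightarrow\Sss'/\Oss_{X'}\longrightarrow 0\,.
\]
Since $H^1(X',\Sss'/\Sss)=H^1(X',\Sss'/\Oss_{X'})=0$, both are six-term exact sequences with vanishing alternating sum of dimensions, giving
\begin{align*}
\dim H^0(X',\Sss)-\dim H^1(X',\Sss)&=\dim H^0(X',\Sss')-\dim H^1(X',\Sss')-\dim H^0(X',\Sss'/\Sss),\\
\dim H^0(X',\Oss_{X'})-\dim H^1(X',\Oss_{X'})&=\dim H^0(X',\Sss')-\dim H^1(X',\Sss')-\dim H^0(X',\Sss'/\Oss_{X'})\,.
\end{align*}

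Finally I would subtract the second identity from the first. The term $\dim H^0(X',\Sss')-\dim H^1(X',\Sss')$ cancels, and by the definition of $\deg\Sss$ the right-hand side collapses to $\dim H^0(X',\Sss'/\Oss_{X'})-\dim H^0(X',\Sss'/\Sss)=\deg\Sss$. On the left, Lemma~\ref{L:dimH} gives $\dim H^0(X',\Oss_{X'})=1$ and $\dim H^1(X',\Oss_{X'})=g'$, so the left-hand side equals $\dim H^0(X',\Sss)-\dim H^1(X',\Sss)-1+g'$, and rearranging yields the claimed equality. The only point that needs care is the finiteness bookkeeping — knowing a priori that all the $H^0$ and $H^1$ involved are finite-dimensional, and that $H^1$ of the finite-support quotients genuinely vanishes — but this follows from coherence and compactness exactly as in the proof that $\deg$ is well defined, so there is no real obstacle: the theorem is essentially formal once the definition of degree and Lemma~\ref{L:dimH} are available.
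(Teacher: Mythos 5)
Your proposal is correct and follows essentially the same route as the paper's own proof: the same two short exact sequences comparing $\Sss$ and $\Oss_{X'}$ to a common enveloping divisor $\Sss'$, vanishing of $H^1$ for the finite-support quotients, subtraction of the two alternating-sum identities, and the inputs $\dim H^0(X',\Oss_{X'})=1$, $\dim H^1(X',\Oss_{X'})=g'$ together with the definition of $\deg\Sss$. Your extra remarks on finiteness of the cohomology groups are additional bookkeeping, not a deviation in method.
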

\begin{proof}
Let $\Sss'$ be a generalised divisor with $\Sss\subset \Sss'$ and $\Oss_{X'}\subset \Sss'$. Then the sequences
\[
0\rightarrow \Sss \hookrightarrow \Sss' \rightarrow \Sss'/\Sss \rightarrow 0
\]
and
\[
0\rightarrow \Oss_{X'} \hookrightarrow \Sss' \rightarrow \Sss'/\Oss_{X'} \rightarrow 0
\] 
are exact. Due to Proposition~\ref{prop3}, $\supp(\Sss'/\Sss) \subset \supp(\Sss') \cup \supp(\Sss)$ and $\supp(\Sss'/\Oss_{X'})=\supp(\Sss')$ are discrete. So one has $H^1(X',\Sss'/\Sss)=0$ and $H^1(X',\Sss'/\Oss_{X'})=0$, and the long exact sequences are given by
\begin{multline*}
0\rightarrow H^0(X',\Sss)\to H^0(X',\Sss')\to H^0(X',\Sss'/\Sss) \to \\
\to H^1(X',\Sss)\to H^1(X',\Sss')\to 0
\end{multline*}
and 
\begin{multline*}
0 \to H^0(X',\Oss_{X'})\to H^0(X',\Sss')\to H^0(X',\Sss'/\Oss_{X'}) \to \\
\to H^1(X',\Oss_{X'}) \to H^1(X',\Sss') \to 0 \; .
\end{multline*}
So the alternating sums of dimensions of spaces in exact sequences yield
\begin{multline}\label{eq:rr1}
\dim H^0(X',\Sss)-\dim H^0(X',\Sss') + \overbrace{\dim H^0(X',\Sss'/\Sss)}^{= \deg (\Sss') -\deg (\Sss)} \\
-\dim H^1(X',\Sss)+\dim H^1(X',\Sss')=0
\end{multline}
and 
\begin{multline}\label{eq:rr2}
\overbrace{\dim H^0(X',\Oss_{X'})}^{= 1}-\dim H^0(X',\Sss') + \overbrace{\dim H^0(X',\Sss'/\Oss_{X'})}^{= \deg (\Sss')} \\
	- \underbrace{\dim H^1(X',\Oss_{X'})}_{=g'}+\dim H^1(X',\Sss')=0 \;.
\end{multline}
Subtracting~\eqref{eq:rr2} from~\eqref{eq:rr1} yields the assertion.
\end{proof}
A simple consequence of the Riemann-Roch theorem is the existence of global meromorphic functions on \,$X'$\,. 
\begin{Corollary}
\label{C:meromorphic-functions}
For any generalised divisor \,$\Sss'$\, on \,$X'$\, with \,$\deg \Sss' \geq g$, we have
\,$\dim H^0(X',\Sss') \geq 1$\,.
In particular, for every generalised divisor \,$\Sss'$, there exists a generalised divisor  \,$\widetilde{\Sss}' \supset \Sss'$\, with \,$\mathrm{supp}(\widetilde{\Sss}'/\Sss') \subset X'\setminus S'$\, and \,$\dim H^0(X',\widetilde{\Sss}') \geq 1$\,.\qed
\end{Corollary}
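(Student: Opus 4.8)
The plan is to read the first assertion off Theorem~\ref{T:riemannroch} and then to reduce the ``in particular'' part to it by enlarging the given divisor at one regular point. I would apply Theorem~\ref{T:riemannroch} to $\Sss'$, which gives
\[ \dim H^0(X',\Sss') = \dim H^1(X',\Sss') + \deg\Sss' + 1 - g' . \]
From $\dim H^1(X',\Sss')\ge 0$ one reads off $\dim H^0(X',\Sss')\ge \deg\Sss' + 1 - g'$, hence $\dim H^0(X',\Sss')\ge 1$ as soon as $\deg\Sss'\ge g'$; no information on $H^1$ beyond non-negativity is used. The one thing to watch is that the genus entering here is the arithmetic genus $g'=g+\delta$ from Lemma~\ref{L:dimH}(b), not the geometric genus $g$: on a singular curve the bound $\deg\Sss'\ge g$ is genuinely insufficient, since a non-trivial degree-$0$ line bundle on a nodal rational curve (where $g=0$) has no non-zero section, a non-zero section of it being nowhere vanishing and hence a trivialisation.

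For the ``in particular'' part, let $\Sss'$ be arbitrary; I may assume $\deg\Sss'<g'$, since otherwise $\widetilde\Sss'=\Sss'$ already works by the above. Because $X'$ is compact, $\supp\Sss'$ is finite by Proposition~\ref{prop3}, whereas $X'\setminus S'$ is a non-empty smooth curve and hence an infinite set, so I would fix a regular point $p_0\in X'\setminus(S'\cup\supp\Sss')$ and a local coordinate $z$ centred at $p_0$. Putting $N:=g'-\deg\Sss'>0$, I define $\widetilde\Sss'$ by the stalks $\widetilde\Sss'_{p_0}:=z^{-N}\Oss_{X',p_0}$ and $\widetilde\Sss'_q:=\Sss'_q$ for $q\ne p_0$. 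This is a finitely generated subsheaf of $\Mss_{X'}$, generated near $p_0$ by the single germ $z^{-N}$, and it contains $\Sss'$ because $\Sss'_{p_0}=\Oss_{X',p_0}\subset z^{-N}\Oss_{X',p_0}$. By construction $\supp(\widetilde\Sss'/\Sss')=\{p_0\}\subset X'\setminus S'$, and since only the stalk at $p_0$ was enlarged, the local degrees sum to $\deg\widetilde\Sss'=\deg\Sss'+\deg_{p_0}(\widetilde\Sss')=\deg\Sss'+N=g'$. The first assertion then yields $\dim H^0(X',\widetilde\Sss')\ge 1$.

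I do not expect a serious obstacle: the corollary is a formal consequence of Riemann--Roch. The two points that need care are the $g$-versus-$g'$ distinction noted above, which pins down the threshold needed in both assertions, and the bookkeeping in the enlargement step, where $p_0$ must be chosen outside $\supp\Sss'$ so that adjoining a pole of order $N$ raises the total degree by exactly $N$ with no cancellation; the existence of such a regular point is precisely where compactness of $X'$ (making $\supp\Sss'$ finite) and one-dimensionality of $X'$ (making $X'\setminus S'$ infinite) are used.
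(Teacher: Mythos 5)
Your proof is correct and follows exactly the route the paper intends: the corollary carries no separate proof because it is read off from Theorem~\ref{T:riemannroch} via \,$\dim H^0(X',\Sss')\geq \deg\Sss'+1-g'$\, (using only \,$\dim H^1(X',\Sss')\geq 0$\,), and the ``in particular'' clause is obtained, as you do, by adjoining a pole at a smooth point of \,$X'\setminus(S'\cup\supp(\Sss'))$\, until the degree reaches the threshold, the degree increase being \,$\dim H^0(X',\widetilde{\Sss}'/\Sss')=N$\,. Your caveat about the genus is justified: with \,$g$\, the geometric genus, as the notation literally says, the first assertion is false, and your example is a genuine counterexample --- a non-trivial invertible sheaf of degree \,$0$\, on a rational curve with one node (\,$g=0$\,, \,$g'=1$\,) has no non-zero section, since such a section would give an injection \,$\Oss_{X'}\to\Sss'$\, with cokernel of length \,$\deg\Sss'=0$\, and hence a trivialisation. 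So the bound in the statement must be read with the arithmetic genus \,$g'=g+\delta$\, of Lemma~\ref{L:dimH}, which is what enters Riemann--Roch; with that reading your argument establishes both assertions, and the second assertion is unaffected either way, since one may enlarge the divisor at smooth points by as much as needed.
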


The elements of \,$H^0(X',\widetilde{\Sss}')$\, are meromorphic sections of \,$\Sss'$\, because they are holomorphic with the exception of finitely many smooth poles.
\section{Regular differential forms and Serre duality}\label{se:serre duality}
Next, we define regular differential forms on $X'$. This definition also makes sense if $X'$ is non-compact.
\begin{Definition}
A meromorphic differential form $\omega$ on $X'$ is \emph{regular at $q\in X'$} if
\begin{equation}\label{eq:Res}
\sum_{p\in\pi^{-1}[\{q\}]}\Res_p\big(\pi^*(f\cdot \omega)\big)=0 
\ \text{ for all }f\in \mathcal{O}_{X',q} \; , 
\end{equation}
where the residue \,$\Res_p$\, is defined as in \cite[\S 9.9]{Fo}.
We say that $\omega$ is \emph{regular} if $\omega$ is regular at every $q\in X'$. $\Omega_{X'}$ is the \emph{sheaf of regular differential forms} on $X'$.
\end{Definition}

We denote the \emph{sheaf of holomorphic differential forms} on $X$ by $\Omega_X$. The stalk at $q\in X'$ of the direct image $\pi_*\Omega_X$ is defined by 
\[
(\pi_*\Omega_X)_q=\bigoplus_{p\in\pi^{-1}[\{q\}]}\Omega_{X,p}\; .
\]
Denote the sheaf of meromorphic $1$-forms on $X'$ by $d\mathcal{M}_{X'}$.
Similarly as for meromorphic functions, the sheaf $d\mathcal{M}_{X'}$
can be identified with the direct image sheaf $\pi_* d\mathcal{M}_{X}$\,. For germs $\omega\in(\pi_*\Omega_X)_q$ \eqref{eq:Res} holds for all $f \in \bar{\mathcal{O}}_{X',q}$, so $\pi_*\Omega_X$ is a subsheaf of $\Omega_{X'}$. Therefore, a \emph{pairing} between $\bar{\mathcal{O}}_{X',q}/\mathcal{O}_{X',q}$ and $\Omega_{X',q}/(\pi_*\Omega_X)_q$ is defined by
\begin{equation}\label{eq:Res2}
(f,\omega)\mapsto\sum_{p\in\pi^{-1}[\{q\}]}\Res_p\big(\pi^*(f\cdot\omega)\big).
\end{equation}
\begin{Lemma}
\label{L:nondegenerate}
The pairing in \eqref{eq:Res2} is non-degenerate.
\end{Lemma}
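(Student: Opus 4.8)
The plan is to prove non-degeneracy locally at each $q \in S'$ by a dimension-counting argument combined with an explicit non-vanishing observation. Since all the sheaves involved are supported on the discrete set $S'$, it suffices to show that for each fixed $q\in S'$ the bilinear pairing
\[
\bar{\mathcal{O}}_{X',q}/\mathcal{O}_{X',q} \times \Omega_{X',q}/(\pi_*\Omega_X)_q \longrightarrow \C,\qquad (f,\omega)\mapsto\sum_{p\in\pi^{-1}[\{q\}]}\Res_p\big(\pi^*(f\cdot\omega)\big)
\]
has trivial left and right kernels. First I would verify that both vector spaces are finite-dimensional and of the \emph{same} dimension. The left-hand space has dimension $\delta_q$ by Proposition~\ref{prop1}(a). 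For the right-hand space, the key point is the very definition of regularity: $\Omega_{X',q}$ consists of those meromorphic forms $\omega$ for which $\sum_p \Res_p(\pi^*(f\cdot\omega)) = 0$ for all $f \in \mathcal{O}_{X',q}$, which is precisely the condition that the linear functional $f \mapsto \sum_p\Res_p(\pi^*(f\cdot\omega))$ on $\bar{\mathcal{O}}_{X',q}$ annihilates $\mathcal{O}_{X',q}$, i.e.\ factors through $\bar{\mathcal{O}}_{X',q}/\mathcal{O}_{X',q}$. Meanwhile $(\pi_*\Omega_X)_q$ consists of those $\omega$ for which that functional vanishes on all of $\bar{\mathcal{O}}_{X',q}$ (this is the statement, quoted just before the lemma, that \eqref{eq:Res} holds for all $f\in\bar{\mathcal{O}}_{X',q}$ exactly when $\omega$ is a germ of a holomorphic form). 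Hence the pairing is, by construction, the natural pairing between $\bar{\mathcal{O}}_{X',q}/\mathcal{O}_{X',q}$ and a subspace of its dual, and the right-hand quotient $\Omega_{X',q}/(\pi_*\Omega_X)_q$ injects into $(\bar{\mathcal{O}}_{X',q}/\mathcal{O}_{X',q})^\ast$; in particular non-degeneracy on the right is automatic.

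It remains to show the pairing is non-degenerate on the left, equivalently that the injection $\Omega_{X',q}/(\pi_*\Omega_X)_q \hookrightarrow (\bar{\mathcal{O}}_{X',q}/\mathcal{O}_{X',q})^\ast$ is surjective, i.e.\ that $\dim\big(\Omega_{X',q}/(\pi_*\Omega_X)_q\big) \geq \delta_q$. The approach is to produce, for every linear functional $\varphi$ on $\bar{\mathcal{O}}_{X',q}$ vanishing on $\mathcal{O}_{X',q}$, a meromorphic form $\omega$ realising it via residues. Write $\bar{\mathcal{O}}_{X',q} = \bigoplus_{p\in\pi^{-1}[\{q\}]}\mathcal{O}_{X,p}$ with each $\mathcal{O}_{X,p}\cong\C\{t_p\}$. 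On the normalisation side, the classical local residue pairing is perfect: every $\C$-linear functional on $\C\{t_p\}$ that vanishes on $t_p^N\C\{t_p\}$ for some $N$ is of the form $h \mapsto \Res_p(h\,\eta_p)$ for a unique meromorphic form $\eta_p$ with a pole of order $\leq N$ at $p$, and the polar part of $\eta_p$ is uniquely determined. By Proposition~\ref{prop1}(b) there is an $n$ with $r_q^n\subset\mathfrak{c}_q\subset\mathcal{O}_{X',q}$, so any functional $\varphi$ annihilating $\mathcal{O}_{X',q}$ in particular annihilates $r_q^n = \bigoplus_p t_p^n\C\{t_p\}$, hence is represented by a tuple $(\eta_p)_p$ of meromorphic forms, i.e.\ by a single meromorphic form $\omega$ on $X$ with $\pi_*\omega$ realising $\varphi$. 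Setting $\omega$ as this meromorphic form on $X'$, regularity of $\omega$ at $q$ is exactly the condition that $\varphi$ vanishes on $\mathcal{O}_{X',q}$, which holds; and $\omega\notin(\pi_*\Omega_X)_q$ precisely when $\varphi\neq 0$. This gives the desired surjectivity.

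The main obstacle I anticipate is making the "realisation" step fully rigorous: one must check that the local residue pairing on each $\C\{t_p\}$ really is perfect in the appropriate finite-dimensional sense (functionals killing $t_p^N\C\{t_p\}$ $\leftrightarrow$ forms with pole order $\leq N$ modulo holomorphic forms), and then that assembling the pieces over all $p\in\pi^{-1}[\{q\}]$ produces a genuine element of $\Omega_{X',q}$ — that is, that the direct-sum decompositions of $\bar{\mathcal{O}}_{X',q}$ and of $(\pi_*d\mathcal{M}_X)_q$ are compatible with the residue pairing. A cleaner packaging, which I would prefer if space allows, is: the pairing is the composite $\Omega_{X',q}/(\pi_*\Omega_X)_q \to (\bar{\mathcal{O}}_{X',q}/\mathcal{O}_{X',q})^\ast$ described above, it is injective by definition of $(\pi_*\Omega_X)_q$, and it is surjective because already $d\mathcal{M}_{X',q}/(\pi_*\Omega_X)_q \to (\bar{\mathcal{O}}_{X',q}/r_q^n)^\ast$ is an isomorphism by the classical Laurent/residue duality on each branch, and restriction to the regular subspace $\mathcal{O}_{X',q}/r_q^n$ of $\bar{\mathcal{O}}_{X',q}/r_q^n$ corresponds under duality exactly to cutting $d\mathcal{M}_{X',q}$ down to $\Omega_{X',q}$. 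Counting dimensions then forces equality and hence non-degeneracy on both sides.
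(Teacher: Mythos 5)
Your proposal is correct and takes essentially the same route as the paper: both arguments realise a linear functional that vanishes on $\mathcal{O}_{X',q}$ (which exists by finite-dimensionality of $\bar{\mathcal{O}}_{X',q}/\mathcal{O}_{X',q}$) as residue pairing against a meromorphic form, observe that such a representing form is automatically regular, and handle the other side by pairing a form with a pole against a suitable element of $\bar{\mathcal{O}}_{X',q}$; your use of $r_q^n\subset\mathfrak{c}_q\subset\mathcal{O}_{X',q}$ merely makes explicit the local residue duality that the paper invokes wholesale. One small caveat: right non-degeneracy is not quite ``automatic'' from the statement quoted before the lemma (that gives only the inclusion $\pi_*\Omega_X\subset\Omega_{X'}$); the converse, that a form with a pole at some $p\in\pi^{-1}[\{q\}]$ pairs nontrivially with an element of $\bar{\mathcal{O}}_{X',q}$ supported on that branch, is the short residue computation the paper supplies in its second paragraph and which your branch-wise Laurent duality also furnishes, so no real gap remains.
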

\begin{proof}
Let $f\in(\bar{\mathcal{O}}_{X',q}/\mathcal{O}_{X',q})\setminus\{0\}$. Because this quotient is finite-dimensional, there exists a linear form on $\bar{\mathcal{O}}_{X',q}$ which vanishes on  $\mathcal{O}_{X',q}$ but is non-zero at $f$. The pairing between $\mathcal{M}_{X',q}$ and the stalk of meromorphic differential forms on $X'$ at $q$ is non-degenerate. Therefore, this linear form can be represented by the pairing of $f$ with some $\omega$ from this stalk. Consequently, the pairing of $\Oss_{X',q}$ with $\omega$ vanishes. So $\omega\in \Omega_{X',q}$, and the pairing \eqref{eq:Res2} of $f$ with $\omega$ is non-zero. 

Conversely, let $\omega\in\big(\Omega_{X',q}/(\pi_*\Omega_X)_{q}\big)\setminus\{0\}$ be given. Then, $\omega$ has a pole at some $p\in\pi^{-1}[\{q\}]$. Due to Equation~\eqref{eq:decompose-Obar}, the pairing \eqref{eq:Res2} of $\omega$ with some $f\in\bar{\mathcal{O}}_{X',q}$ is non-zero. 
\end{proof}

It follows from this lemma that also $\dim(\Omega_{X',q}/(\pi_*\Omega_X)_{q}\big)=\delta_q$ holds. Because $\Omega_X$ is coherent on $X,\pi_*\Omega_X$ is coherent on $X'$, and therefore also $\Omega_{X'}$. 

Every global meromorphic function $f$ on $X'$ which does not vanish identically on a connected component of $X'$ has an inverse meromorphic function. For such an $f$, the map $g\mapsto g\cdot df$ is an isomorphism from the sheaf of meromorphic functions onto the sheaf of meromorphic $1$-forms. In this sense, we can interpret finitely generated $\mathcal{O}_{X'}$-submodules of the sheaf of meromorphic $1$-forms as generalised divisors on $X'$. 
More precisely, the map \,$g \cdot df \mapsto g$\, identifies finitely generated \,$\Oss_{X'}$-submodules of \,$d\mathcal{M}_{X'}$\, with generalised divisors on \,$X'$\, and the isomorphism classes of the generalised divisors do not depend on the choice of \,$df$\,. The degree of such a submodule is defined as the degree of the corresponding generalised divisor.

\begin{Theorem}
\label{T:Omega-gendiv}
For every generalised divisor $\mathcal{S}'$ on $X'$,
\[
\Omega_{X'}(\mathcal{S}'):=\{\omega\in d\mathcal{M}_{X'}\,\mid\, f\cdot\omega\in\Omega_{X'}\text{ for all }f\in\mathcal{S}'\}
\]
is a generalised divisor on $X', i.e.\ \Omega_{X'}(\mathcal{S}')$ is a finitely generated $\mathcal{O}_{X'}$-submodule of $d\mathcal{M}_{X'}$\,. 
\end{Theorem}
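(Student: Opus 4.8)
The plan is to reduce the statement to a local one at each point $q \in X'$, show that $\Omega_{X'}(\Sss')_q$ is a finitely generated $\Oss_{X',q}$-module for each $q$, and then assemble these into a finitely generated $\Oss_{X'}$-submodule of $d\Mss_{X'}$ using the discreteness of the relevant support. Fix a global meromorphic $1$-form $df$ on $X'$ (obtained, say, from a non-constant meromorphic function on each connected component, whose existence is guaranteed by Corollary~\ref{C:meromorphic-functions}). Via the isomorphism $g\cdot df \mapsto g$ from finitely generated $\Oss_{X'}$-submodules of $d\Mss_{X'}$ to generalised divisors, it suffices to show that $\Omega_{X'}(\Sss')\cdot (df)^{-1}$ is a finitely generated subsheaf of $\Mss_{X'}$; equivalently, I will work directly with $\Omega_{X'}(\Sss')$ as a subsheaf of $d\Mss_{X'}$ and produce finitely many local generators at each point.

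First I would observe that $\Omega_{X'}(\Sss')$ is a sheaf: the defining condition ``$f\cdot\omega \in \Omega_{X'}$ for all $f\in\Sss'$'' is local and closed under the $\Oss_{X'}$-module operations, since $\Omega_{X'}$ is an $\Oss_{X'}$-module (indeed $\pi_*\Omega_X \subset \Omega_{X'}$ and the residue condition~\eqref{eq:Res} is $\Oss_{X'}$-linear in the obvious way). Next, on the regular locus $X' \setminus S'$ the claim is classical: there $\Sss'$ is locally $\Oss_{D}$ for a classical divisor $D$, $\Omega_{X'}$ coincides with $\pi_*\Omega_X$, and $\Omega_{X'}(\Sss')$ is locally generated by a single meromorphic $1$-form, namely $z^{-\ord_q(D)}\,dz$ in a local coordinate $z$. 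So the content is entirely at the finitely many points $q\in S'$ together with $\supp(\Sss')$, which is discrete by Proposition~\ref{prop3}.

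The heart of the argument is therefore the local finite generation at a point $q\in S'$. I would exploit Proposition~\ref{prop1}(b): there is $n\in\N$ with $\C + \mathfrak{c}_q \subset \Oss_q$ and $r_q^n \subset \mathfrak{c}_q$, so $\mathfrak{c}_q$ is a nonzero $\Bar{\Oss}_q$-ideal contained in $\Oss_q$. Pick a nonzero $c\in\mathfrak{c}_q$. Then for any $\omega\in\Omega_{X',q}(\Sss'_q)$ and any $f\in\Sss'_q$ one has $c\cdot f \in \Oss_q$ (because $c$ is in the conductor and $\Sss'_q\subset\Bar{\Oss}_q$ after possibly enlarging — here I use that $\Sss'_q$ is a finitely generated $\Oss_q$-submodule of $\Mss_{X',q}$, hence contained in $c_0^{-1}\Bar{\Oss}_q$ for a suitable $c_0$, and I can absorb $c_0$), hence $(c\cdot f)\cdot \omega\in\Omega_{X'}$; running over $f$ this says $c\cdot\omega \in \Omega_{X'}(\Oss_{X'})_q$ modulo a bounded correction. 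The upshot I am aiming for is a chain of inclusions of the shape
\[
h_1 \cdot (\pi_*\Omega_X)_q \;\subset\; \Omega_{X'}(\Sss')_q \;\subset\; h_2 \cdot (\pi_*\Omega_X)_q
\]
for suitable nonzero meromorphic germs $h_1, h_2$ (built from $c$, from local generators of $\Sss'_q$, and from the discrepancy $\delta_q$). Since $(\pi_*\Omega_X)_q = \bigoplus_{p\in\pi^{-1}[\{q\}]}\Omega_{X,p}$ is a free $\Bar{\Oss}_q$-module of rank one, $h_2\cdot(\pi_*\Omega_X)_q / h_1\cdot(\pi_*\Omega_X)_q$ is a finite-dimensional $\C$-vector space; and $\Oss_q$ is Noetherian (it is of the form $\C\{x_1,\dots,x_k\}/I$ by Proposition~\ref{P:covers}, or directly its normalisation $\Bar{\Oss}_q$ is module-finite over it with $\delta_q<\infty$), so the sandwiched $\Oss_q$-module $\Omega_{X'}(\Sss')_q$ is finitely generated. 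Globalising: choose finitely many local generators at each of the finitely many points of $S'\cup\supp(\Sss')$ and a single meromorphic generator elsewhere; extending by zero past their supports of relevance and patching (exactly as in Remark~\ref{rem:data}(3), restricting stalks on a discrete set does not disturb the others), one obtains finitely many global sections of $d\Mss_{X'}$ generating $\Omega_{X'}(\Sss')$ as an $\Oss_{X'}$-module.

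The main obstacle I anticipate is making the ``sandwich'' $h_1\cdot(\pi_*\Omega_X)_q \subset \Omega_{X'}(\Sss')_q \subset h_2\cdot(\pi_*\Omega_X)_q$ precise — that is, translating the residue-pairing definition of $\Omega_{X'}$ (and hence of $\Omega_{X'}(\Sss')$) into an honest statement about pole orders on the normalisation. The clean way, I expect, is to use Lemma~\ref{L:nondegenerate}: it identifies $\Omega_{X',q}/(\pi_*\Omega_X)_q$ with the $\delta_q$-dimensional dual of $\Bar{\Oss}_q/\Oss_q$ via the residue pairing, so ``$\omega\in\Omega_{X'}$'' means exactly ``$\omega\in d\Mss$ and the residue functional $f\mapsto \sum_p\Res_p(\pi^*(f\omega))$ kills $\Oss_q$''. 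Feeding $f$ ranging over $\Sss'_q$ into this, the condition on $\omega\in\Omega_{X'}(\Sss')_q$ becomes: the residue functional associated to $\omega$ kills the $\Oss_q$-module $\Sss'_q\cdot\Oss_q = \Sss'_q$; since $\Sss'_q$ contains $c\cdot\Bar{\Oss}_q$ for suitable $c$ and is contained in $c'^{-1}\Bar{\Oss}_q$ for suitable $c'$, this pins down the pole behaviour of $\omega$ at each $p\in\pi^{-1}[\{q\}]$ up to a finite discrepancy, which is precisely the desired two-sided bound. Once that local reduction is in hand, the rest is the routine Noetherian/discreteness packaging sketched above.
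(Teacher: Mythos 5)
Your proposal is correct, and its skeleton is the same as the paper's: localise at each point, compare \,$\Omega_{X',q}(\Sss')$\, with a twist of \,$(\pi_*\Omega_X)_q$\, coming from the normalisation (where all generalised divisors are locally free), and absorb the discrepancy, which is finite-dimensional thanks to Lemma~\ref{L:nondegenerate} and \,$\delta_q<\infty$\,. The difference lies in how the comparison module is produced. The paper bounds \,$\Omega_{X',q}(\Sss')$\, from below by \,$(\pi_*\Omega_X(\Sss))_q$\,, where \,$\Sss$\, is generated on \,$X$\, by the pullbacks of the finitely many generators of \,$\Sss'_q$\,; this module is locally free on \,$X$\,, hence finitely generated over \,$\bar{\Oss}_{X',q}$\, and so over \,$\Oss_{X',q}$\,, and the quotient \,$\Omega_{X',q}(\Sss')/(\pi_*\Omega_X(\Sss))_q$\, is finite-dimensional because \,$\dim\bigl((\pi_*\Sss)_q/\Sss'_q\bigr)<\infty$\, and the residue pairing is non-degenerate. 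You instead construct an explicit two-sided sandwich \,$h_1\cdot(\pi_*\Omega_X)_q\subset\Omega_{X',q}(\Sss')\subset h_2\cdot(\pi_*\Omega_X)_q$\,: the upper bound uses the conductor from Proposition~\ref{prop1}(b) (choosing \,$f_0\in\Sss'_q$\, and \,$c\in\mathfrak{c}_q$\, non-zero-divisors, \,$f_0c\,\bar{\Oss}_{X',q}\subset\Sss'_q$\,, and the residue condition then forces \,$f_0c\,\omega\in(\pi_*\Omega_X)_q$\, by non-degeneracy of the residue pairing at smooth points), the lower bound is a common denominator of the generators, and you finish either by Noetherianity of \,$\Oss_{X',q}$\, or by the finite codimension of the lower bound in the upper one. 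This variant is more hands-on (indeed, with the Noetherian argument only your upper bound is needed), but it requires an element of \,$\Sss'_q$\, not vanishing identically on any branch of \,$\pi^{-1}[\{q\}]$\, — the same implicit non-degeneracy under which the paper's \,$\Sss_p$\, is invertible on every branch — and the vague phrase ``modulo a bounded correction'' in your middle paragraph should simply be replaced by the precise residue-pairing statement you give in your final paragraph, which is the argument that actually closes the proof.
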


\begin{proof}
Let $q\in X'$, $\omega\in \Omega_{X',q}(\mathcal{S}')$, $g\in\mathcal{O}_{X',q}$ and $f\in\mathcal{S}_q'$. Then $f\cdot g\in\mathcal{S}_q'$ and hence, $f\cdot g\cdot\omega\in\Omega_{X',q}$. Since this holds for all $f\in \mathcal{S}_q'$, we have $g\cdot\omega\in\Omega_{X',q}(\mathcal{S}')$. This shows that $\Omega_{X'}(\mathcal{S}')$ is an $\mathcal{O}_{X'}$-submodule of $d\mathcal{M}_{X'}$. In order to prove that it is finitely generated, it suffices to show that $\Omega_{X',q}(\mathcal{S}')$ is a finitely generated $\bar{\mathcal{O}}_{X',q}$-module because $\delta_q<\infty$. Let $f_1,\ldots,f_n$ generate $\mathcal{S}_q'$, and for \,$p\in \pi^{-1}[\{q\}]$\,, let $\mathcal{S}_p$ be the $\mathcal{O}_{X,p}$-submodule of $\mathcal{M}_{X,p}$ generated by $\pi^* f_1,\ldots,\pi^* f_n$. Since on $X$, all generalised divisors are locally free, in particular invertible, the following submodule of $\Omega_{X',q}(\mathcal{S}')$ is finitely generated as an $\bar{\mathcal{O}}_{X',q}$-module and therefore also as an $\mathcal{O}_{X',q}$-module:
\[
(\pi_* \Omega_X(\mathcal{S}))_q =\{\omega\in d\mathcal{M}_{X',q}\,\mid\,f\cdot\omega\in(\pi_* \Omega_X)_q\text{ for all }f\in (\pi_*\mathcal{S})_q\}\;.
\]
It suffices to show now that $\Omega_{X',q}(\mathcal{S}')/(\pi_* \Omega_X(\mathcal{S}))_q$ has finite dimension. By definition of $(\pi_*\mathcal{S})_q$, we have $\dim((\pi_* \mathcal{S})_q/\mathcal{S}_q')<\infty$. By Lemma \ref{L:nondegenerate}, $\dim(\Omega_{X',q}/(\pi_*\Omega_X)_q)=\delta_q<\infty$. This implies the claim.
\end{proof}

\begin{Theorem}[Serre duality]
\label{T:serre}
For every generalised divisor $\mathcal{S}'$, the vector space $H^0(X',\Omega_{X'}(\mathcal{S}'))$ is canonically isomorphic to the dual of $H^1(X',\mathcal{S}')$ and $H^1(X',\Omega_{X'}(\mathcal{S}'))$ to the dual of $H^0(X',\mathcal{S}')$.
\end{Theorem}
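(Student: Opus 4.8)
The plan is to reduce Serre duality on the singular curve \,$X'$\, to the classical Serre duality on the smooth compact Riemann surface \,$X$\, (see \cite[\S17]{Fo}) by a d\'evissage along the normalisation \,$\pi$, the only genuinely new local ingredient being a generalisation of Lemma~\ref{L:nondegenerate}. Fix a generalised divisor \,$\Sss'$\, on \,$X'$\, and let \,$\Sss$\, be the invertible sheaf on \,$X$\, locally generated by the pullbacks of local generators of \,$\Sss'$\,, so that \,$\Sss' \subset \pi_\ast\Sss$\, with quotient sheaf \,$\Qss := \pi_\ast\Sss/\Sss'$\, supported on \,$S'$\, with finite-dimensional stalks. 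Writing \,$\Omega_X(\Sss) := \{\omega \in d\Mss_X \mid f\cdot\omega \in \Omega_X \text{ for all } f\in\Sss\}$\, for the invertible sheaf obtained from \,$\Omega_X$\, by twisting with the dual of \,$\Sss$, one has \,$\pi_\ast\Omega_X(\Sss) \subset \Omega_{X'}(\Sss')$\, with quotient \,$\Qss' := \Omega_{X'}(\Sss')/\pi_\ast\Omega_X(\Sss)$\, again supported on \,$S'$\, with finite-dimensional stalks, the latter being part of the proof of Theorem~\ref{T:Omega-gendiv}. Since \,$\Qss$\, and \,$\Qss'$\, have discrete support, \,$H^1(X',\Qss)=H^1(X',\Qss')=0$\, as in the proof of Lemma~\ref{L:dimH}(b); and exactly as in the proof of Lemma~\ref{L:dimH}(a), the Leray argument gives \,$H^q(X',\pi_\ast\Sss)=H^q(X,\Sss)$\, and \,$H^q(X',\pi_\ast\Omega_X(\Sss))=H^q(X,\Omega_X(\Sss))$\, for \,$q\in\{0,1\}$\,.

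First I would prove a local duality: for every \,$q\in X'$\,, the residue form
\[
\Qss_q \times \Qss'_q \longrightarrow \C, \qquad (f,\omega) \longmapsto \sum_{p\in\pi^{-1}[\{q\}]} \Res_p\big(\pi^\ast(f\cdot\omega)\big)
\]
is well defined --- if \,$f\in\Sss'_q$\, and \,$\omega\in\Omega_{X',q}(\Sss')$\,, then \,$f\cdot\omega\in\Omega_{X',q}$\, and the sum vanishes by \eqref{eq:Res} with test function \,$1$; and if \,$\omega\in(\pi_\ast\Omega_X(\Sss))_q$\,, then \,$\pi^\ast(f\cdot\omega)$\, is holomorphic at each \,$p$\, for every \,$f\in(\pi_\ast\Sss)_q$\, --- and it is non-degenerate by the argument of Lemma~\ref{L:nondegenerate}: a non-zero linear functional on the finite-dimensional space \,$\Qss_q=(\pi_\ast\Sss)_q/\Sss'_q$\,, pulled back to \,$(\pi_\ast\Sss)_q=\bigoplus_{p\in\pi^{-1}[\{q\}]}\Sss_p$\, and using that \,$\Sss'_q$\, contains \,$r_q^m\cdot(\pi_\ast\Sss)_q$\, for some \,$m\in\N$\,, is represented by the residue pairing with a germ \,$\omega\in d\Mss_{X',q}$\,, and such an \,$\omega$\, automatically lies in \,$\Omega_{X',q}(\Sss')$\, and is non-zero in \,$\Qss'_q$\,; the symmetric argument settles the other variable. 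Hence the induced pairing \,$H^0(X',\Qss)\times H^0(X',\Qss')\to\C$\, is perfect; in particular \,$\dim H^0(X',\Qss)=\dim H^0(X',\Qss')$\,.

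Next I would construct the global residue pairing \,$H^0(X',\Omega_{X'}(\Sss'))\times H^1(X',\Sss')\to\C$\, along the lines of \cite[\S17]{Fo}: represent a class of \,$H^1(X',\Sss')$\, by a Mittag--Leffler-type cocycle \,$\{f_i\}$\, with values in \,$\Sss'$\, for a suitable covering, and pair it with a global section \,$\omega$\, of \,$\Omega_{X'}(\Sss')$\, by summing the residues of \,$\pi^\ast(f_i\cdot\omega)$\, over the poles; independence of the representing cocycle follows from the residue theorem on the compact Riemann surface \,$X$\, together with the regularity condition \eqref{eq:Res} defining \,$\Omega_{X'}$\,. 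I would then verify two compatibilities, both amounting to bookkeeping of residues through connecting homomorphisms: first, that the inclusion \,$H^0(X,\Omega_X(\Sss))=H^0(X',\pi_\ast\Omega_X(\Sss))\hookrightarrow H^0(X',\Omega_{X'}(\Sss'))$\, and the surjection \,$H^1(X',\Sss')\twoheadrightarrow H^1(X',\pi_\ast\Sss)=H^1(X,\Sss)$\, from the cohomology sequence of \,$0\to\Sss'\to\pi_\ast\Sss\to\Qss\to0$\, are adjoint with respect to this pairing and the Serre pairing on \,$X$; and second, that the connecting homomorphisms \,$H^0(X',\Qss)\to H^1(X',\Sss')$\, and \,$H^0(X',\Omega_{X'}(\Sss'))\to H^0(X',\Qss')$\, are adjoint with respect to this pairing and the local pairing of the first step.

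Finally, the vanishing of \,$H^1$\, of \,$\Qss$\, and \,$\Qss'$\, truncates the two cohomology sequences to short exact sequences exhibiting \,$H^1(X',\Sss')$\, as an extension of \,$H^1(X,\Sss)$\, by \,$\operatorname{coker}\!\big(H^0(X,\Sss)\to H^0(X',\Qss)\big)$\, and \,$H^0(X',\Omega_{X'}(\Sss'))$\, as an extension of \,$\ker\!\big(H^0(X',\Qss')\to H^1(X,\Omega_X(\Sss))\big)$\, by \,$H^0(X,\Omega_X(\Sss))$\,. By the two compatibilities and Serre duality on \,$X$, the connecting maps \,$H^0(X,\Sss)\to H^0(X',\Qss)$\, and \,$H^0(X',\Qss')\to H^1(X,\Omega_X(\Sss))$\, are mutually adjoint under the perfect local pairing, so the two short exact sequences above are interchanged by dualisation, and the global residue pairing induces a morphism between them which is the Serre isomorphism on \,$X$\, on the sub-objects and the isomorphism from the perfect local pairing on the quotients. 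The five lemma then shows that the map \,$H^0(X',\Omega_{X'}(\Sss'))\to H^1(X',\Sss')^\ast$\, induced by the residue pairing is an isomorphism --- the first assertion, canonical because the residue pairing is. The second assertion, \,$H^1(X',\Omega_{X'}(\Sss'))\cong H^0(X',\Sss')^\ast$\,, follows by the same d\'evissage read from the other ends of the two truncated sequences, using that the Serre pairing on \,$X$\, and the local pairing are perfect in both variables; alternatively, since \,$\Omega_{X'}(\Omega_{X'}(\Sss'))=\Sss'$\,, it is the first assertion applied to \,$\Omega_{X'}(\Sss')$\, in place of \,$\Sss'$\,. The main obstacle is the honest construction of the global residue pairing on the singular curve and the verification of the two compatibilities; the rest is the reduction to the smooth curve \,$X$\, and the local computation generalising Lemma~\ref{L:nondegenerate}.
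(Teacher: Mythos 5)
Your proposal follows essentially the same route as the paper's proof: the two short exact sequences comparing \,$\Sss'$\, with \,$\pi_\ast\Sss$\, and \,$\Omega_{X'}(\Sss')$\, with \,$\pi_\ast\Omega_X(\Sss)$\,, the Leray identification of their cohomology with that on \,$X$\,, the non-degenerate local residue pairing on the skyscraper quotients (generalising Lemma~\ref{L:nondegenerate}), the global residue pairing defined via meromorphic cochains and the Residue Theorem, and the five lemma applied to the commutative diagram whose outer vertical maps come from Serre duality on \,$X$\,. The only point to flag is the parenthetical alternative \,$\Omega_{X'}(\Omega_{X'}(\Sss'))=\Sss'$\,, a biduality statement not established in the paper and requiring a separate local-duality argument; your main derivation of the second assertion from the other end of the diagram does not rely on it, so the proof stands as proposed.
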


\begin{proof}
We transfer the proof of Serre duality for Riemann surfaces in \cite[\S 17]{Fo} and use that it holds on $X$. We will be able to localize the additional ingredients at the singularities. As in the proof of Theorem \ref{T:Omega-gendiv}, let $\mathcal{S}$ be the divisor on $X$ which is locally generated by the pullbacks $\pi^* f_i$ of local generators $f_i$ of $\mathcal{S}'$. Moreover, $\Omega_X(\mathcal{S})$ denotes the sheaf of meromorphic forms on $X$ whose products with elements of $\mathcal{S}$ are holomorphic. Due to Serre duality on $X$, the vector space $H^0(X,\Omega_X(\mathcal{S}))$ is canonically isomorphic to the dual of $H^1(X,\mathcal{S})$. The arguments of the proof of Lemma \ref{L:dimH}(a) show in the present situation
\begin{equation}
\label{eq:Serre-H0H1}
H^0(X,\Omega_X(\mathcal{S})\!)\!\cong\!H^0(X',\pi_*\Omega_X(\mathcal{S})\!)\text{ and }
H^1(X,\mathcal{S})\!\cong\!H^1(X',\pi_*\mathcal{S}).
\end{equation}
To proceed, we consider the following exact sequences of sheaves on $X'$:
\[
0\longrightarrow\mathcal{S}'\longrightarrow\pi_*\mathcal{S}\longrightarrow\pi_*\mathcal{S}/\mathcal{S}'\longrightarrow 0,
\]
\[
0\longrightarrow\pi_*\Omega_X(\mathcal{S})\longrightarrow\Omega_{X'}(\mathcal{S}')\longrightarrow\Omega_{X'}(\mathcal{S}')/\pi_*\Omega_X(\mathcal{S})\longrightarrow 0.
\]
The supports of the sheaves $\pi_*\mathcal{S}/\mathcal{S}'$ and $\Omega_{X'}(\mathcal{S}')/\pi_*\Omega_X(\mathcal{S})$ are contained in $S'$ and therefore discrete. This implies
$$H^1(X',\pi_*\mathcal{S}/\mathcal{S}')=0=H^1(X',\Omega_{X'}(\mathcal{S}')/\pi_*\Omega_X(\mathcal{S})).$$
The corresponding long exact sequences read as
\begin{multline*}
0\rightarrow H^0(X',\mathcal{S}')\hookrightarrow H^0(X',\pi_*\mathcal{S})\rightarrow H^0(X',\pi_*\mathcal{S}/\mathcal{S}')\rightarrow\\\rightarrow H^1(X',\mathcal{S}')\twoheadrightarrow H^1(X',\pi_*\mathcal{S})\rightarrow 0,
\end{multline*}
\begin{multline*}
\hspace{-3mm}0\!\rightarrow\!H^0(X'\!,\pi_*\Omega_X(\mathcal{S})\!)\!\hookrightarrow\!H^0(X'\!,\Omega_{X'}(\mathcal{S}')\!)\!\rightarrow\!H^0(X'\!,\Omega_{X'}(\mathcal{S}')/\pi_*\Omega_X(\mathcal{S})\!)\!\rightarrow\!\\\rightarrow H^1(X',\pi_*\Omega_X(\mathcal{S}))\twoheadrightarrow H^1(X',\Omega_{X'}(\mathcal{S}'))\rightarrow 0. 
\end{multline*}
By dualizing the second exact sequence, we obtain the horizontal sequences of the following diagram:
\begin{multline}
\begin{array}{cccccccc}
\hspace{-5mm}0&\hspace{-2mm}\rightarrow\hspace{-2mm}&H^0(X',\mathcal{S}')&\hspace{-2mm}\hookrightarrow\hspace{-2mm}&H^0(X',\pi_*\mathcal{S})&\hspace{-2mm}\rightarrow\hspace{-2mm}&H^0(X',\pi_*\mathcal{S}/\mathcal{S}')&\hspace{-2mm}\rightarrow\\
\hspace{-5mm}\downarrow&&\downarrow f_0&&\downarrow f_1&&\downarrow f_2\\
\hspace{-5mm}0&\hspace{-2mm}\rightarrow\hspace{-2mm}&\hspace{-2mm}H^1(X'\!,\Omega_{X'}(\mathcal{S}')\!)^\ast\hspace{-3mm}&\hspace{-2mm}\rightarrow\hspace{-2mm}&\hspace{-2mm}H^1(X'\!,\pi_*\Omega_X(\mathcal{S})\!)^\ast\hspace{-3mm}&\hspace{-2mm}\rightarrow\hspace{-2mm}&\hspace{-2mm}H^0(X'\!,\Omega_{X'}(\mathcal{S}')/\pi_*\Omega_X(\mathcal{S})\!)^\ast\hspace{-3mm}&\hspace{-2mm}\rightarrow
\vspace{3mm}\end{array}\\\begin{array}{cccccc}
\rightarrow\hspace{-2mm}&\hspace{-2mm}H^1(X',\mathcal{S}')\hspace{-2mm}&\hspace{-2mm}\twoheadrightarrow\hspace{-2mm}&\hspace{-2mm}H^1(X',\pi_*\mathcal{S})\hspace{-2mm}&\hspace{-2mm}\rightarrow\hspace{-2mm}&\hspace{-1mm}0\hspace{-3mm}\\
&\downarrow f_3&&\downarrow f_4&&\hspace{-2mm}\downarrow\hspace{-3mm}\\
\rightarrow\hspace{-2mm}&\hspace{-2mm}H^0(X'\!,\Omega_{X'}(\mathcal{S}')\!)^\ast\hspace{-3mm}&\hspace{-2mm}\twoheadrightarrow\hspace{-2mm}&\hspace{-2mm}H^0(X'\!,\pi_*\Omega_X(\mathcal{S})\!)^\ast\hspace{-3mm}&\hspace{-2mm}\rightarrow\hspace{-2mm}&\hspace{-1mm}0\hspace{-3mm}
\end{array}\label{eq:diagram}\end{multline}
We now supplement the vertical morphisms $f_0,\ldots,f_4$ such that the whole diagram is commutative and $f_1$, $f_2$ and $f_4$ are isomorphisms. Then $f_0$ and $f_3$ also are isomorphisms due to the $5$-Lemma, see~\cite[Lemma IV.5.10]{Bredon}. Due to Serre duality on $X$ and \eqref{eq:Serre-H0H1}, there exist canonical isomorphisms $f_1$ and $f_4$. It remains to define $f_0$, $f_2$ and $f_3$ such that the diagram is commutative and to prove that $f_2$ is an isomorphism. To define $f_0$, $f_2$ and $f_3$, we choose an open covering $\mathcal{U}$ of $X'$ whose pullback to $X$ is a Leray covering.

The map $f_2$ is defined in terms of the pairings
\begin{align}\label{eq:serre-pairing}
(\pi_*(\mathcal{S}))_q/\mathcal{S}'_q\times\Omega_{X',q}(\mathcal{S}')/(\pi_*\Omega_X(\mathcal{S}))_q &\rightarrow\mathbb{C}\nonumber\\
(f,\omega)&\mapsto\hspace{-4mm}\sum_{p\in\pi^{-1}[\{q\}]}\Res_p(\pi^*(f\cdot\omega)). 
\end{align}
By definition of $\Omega_{X'}$, we have
\[
\Omega_{X',q}(\mathcal{S}')=\biggr\{\omega\in d\mathcal{M}_{X',q}\ \biggr|\ \forall \;f\in\mathcal{S}'_q : \sum_{p\in\pi^{-1}[\{q\}]}\Res_p(\pi^*(f\cdot\omega))=0\biggr\}\;.
\]
One also has
\[
(\pi_*\Omega_X(\mathcal{S}))_q=\biggr\{\omega\in d\mathcal{M}_{X',q}\;\biggr|\;\forall f\in(\pi_*\mathcal{S})_q :\hspace{-4mm} \sum_{p\in\pi^{-1}[\{q\}]}\hspace{-3mm}\Res_p(\pi^*(f\cdot\omega))=0\;\biggr\} \; . 
\]
Hence, the above pairing is well-defined on the quotient spaces.

The vector spaces $H^0(X',\pi_*\mathcal{S}/\mathcal{S}')$ and $H^0(X',\Omega_{X'}(\mathcal{S}')/\pi_*\Omega_X(\mathcal{S}))$ are the direct sums over $q\in S'$ of the corresponding stalks. Therefore, these pairings extend to a pairing between these vector spaces where elements with disjoint support are orthogonal. Furthermore, the map $f_2$ induced by these pairings is an isomorphism if the pairings in \eqref{eq:serre-pairing} are non-degenerate for all $q\in S'$. The proof of the non-degeneracy of the pairing is similar to the proof of Lemma~\ref{L:nondegenerate}. The dimension of $(\pi_*\mathcal{S})_q/\mathcal{S}'_q$ is at most $\delta_q$ times the number of generators of $\mathcal{S}'_q$ and hence finite. Similarly, $\dim(\Omega_{X',q}(\mathcal{S}')/(\pi_*\Omega_X(\mathcal{S}))_q)<\infty$. The non-degeneracy in both entries of the pairing now follows as in the first part of the proof of Lemma~\ref{L:nondegenerate}.

We define $f_3$ in terms of a pairing between $H^0(X',\Omega_{X'}(\mathcal{S}'))$ and $H^1(X',\mathcal{S}')$. Due to \cite[Corollary 17.17]{Fo}, $H^1(X,\mathcal{M})=0$. We choose a covering $\mathcal{U}'$ of $X'$ by non-compact open sets. As before, $H^1(\mathcal{U}',\mathcal{M}_{X'})=0$ and therefore $H^1(X',\mathcal{M}_{X'})=0$. Let $\nu\in H^1(X',\mathcal{S}')$. There exists $\mu\in C^0(\mathcal{U}',\mathcal{M}_{X'})$ with $\delta\mu=\nu$. We define the pairing between $\nu$ and $\omega\in H^0(X',\Omega_{X'}(\mathcal{S}'))$ as $(\nu,\omega)\mapsto\sum_{p\in X}\Res_p(\pi^*(\mu\cdot\omega))$. We have to show that the right hand side only depends on $\nu\in H^1(X',\mathcal{S}')$, i.e.~that it does not change if we add to $\mu$ an element of $Z^0(\mathcal{U}',\mathcal{M}_{X'})=H^0(X',\mathcal{M})$. This follows from the Residue Theorem \cite[Theorem 10.21]{Fo}. So $f_3$ is defined. We define $f_0$ analogously to $f_3$.

It remains to prove the commutativity of the diagram \eqref{eq:diagram}.

Obviously, it commutes at the left of $f_0$, and at the right of $f_4$. To show commutativity between $f_3$ and $f_4$, let $\nu\in H^1(X',\mathcal{S}')$ and $\mu\in C^0(\mathcal{U}',\mathcal{M}_{X'})$ as in the definition of $f_3$. Then the image of $\nu$ in $H^1(X',\mathcal{S}')\to H^1(X',\pi_*\mathcal{S})$ is also the coboundary of $\mu$ in $B^1(\mathcal{U}',\mathcal{M}_{X'})$. Now, the pairing between this image of $\nu$ and $\omega\in H^0(X',\pi_*\Omega_X(\mathcal{S}))$ equals the pairing between $\nu$ and the image of $\omega$ in $H^0(X',\pi_*\Omega_X(\mathcal{S}))\hookrightarrow H^0(X',\Omega_{X'}(\mathcal{S}'))$. This implies commutativity between  $f_3$ and $f_4$. 
Analogously one shows the commutativity between $f_0$ and $f_1$.

To show commutativity between $f_1$ and $f_2$, choose $g\in H^0(X',\pi_*\mathcal{S})$ and $\alpha\in\Omega_{X',q}(\mathcal{S}')/\big(\pi_*\Omega_X(\mathcal{S})\big)_q\subset H^0\big(X',\Omega_{X'}(\mathcal{S}')/\pi_*\Omega_X(\mathcal{S})\big)$ and consider the mappings
\begin{equation*}
\varphi:H^0(X',\pi_*\mathcal{S}) \to H^0(X',\pi_*\mathcal{S}/\mathcal{S}') 
\end{equation*}
and
 \begin{equation*}
  \psi:H^0(X',\Omega_{X'}(\mathcal{S}')/\pi_*\Omega_X(\mathcal{S}))  \to H^1(X',\pi_*\Omega_X(\mathcal{S})) \;, 
\end{equation*}
where $\psi$ is the connecting homomorphism as defined in \cite[15.11]{Fo}. We have to show that the pairing between $g$ and $\psi(\alpha)$ given by the classical Serre duality via \eqref{eq:Serre-H0H1} equals the pairing between $\varphi(g)$ and $\alpha$. The latter pairing is equal to $\sum_{p\in\pi^{-1}[\{q\}]}\Res_p(\pi^*(\alpha\cdot g)).$

We track the construction of $\psi(\alpha)$ from \cite[15.11]{Fo}. There exists a neighborhood $U_q$ of $q$ in $X'$ with $U_q\cap S'=\{q\}$ together with $\beta\in H^0(U_q,\Omega_{X'}(\mathcal{S}'))$ which is mapped to $\alpha$ under 
\begin{multline*}
H^0(U_q,\Omega_{X'}(\mathcal{S}'))\to\Omega_{X',q}(\mathcal{S}')\to\\ \to\Omega_{X',q}(\mathcal{S}')/(\pi_*\Omega_X(\mathcal{S}))_q\subset H^0(X',\Omega_{X'}(\mathcal{S}')/\pi_*\Omega_X(\mathcal{S})).
\end{multline*}
We supplement $U_q$ by the open set $X'\setminus\{q\}$ to obtain the open covering $\mathcal{U}'$ of $X'$. Together with $0$ on $X'\setminus\{q\}$, $\beta$ defines an element $\gamma\in C^0(\mathcal{U}',\Omega_{X'}(\mathcal{S'}))$. On $U_q\cap(X'\setminus\{q\})=U_q\setminus\{q\}$, one has $\pi_*\Omega_X(\mathcal{S})=\Omega_{X'}(\mathcal{S}')$ and hence $\delta\gamma\in C^1(\mathcal{U}',\Omega_{X'}(\mathcal{S}'))=C^1(\mathcal{U}',\pi_*\Omega_X(\mathcal{S}))$. In fact, we have $\delta\gamma\in B^1(\mathcal{U}',\Omega_{X'}(\mathcal{S}'))$ and hence, the coboundary of $\delta\gamma\in C^1(\mathcal{U}',\pi_*\Omega_X(\mathcal{S}))$ also vanishes. Therefore, $\delta\gamma$ induces an element of $H^1(\mathcal{U}',\pi_*\Omega_X(\mathcal{S}))=H^1(X',\pi_*\Omega_X(\mathcal{S}))$, where the last equality follows because $\mathcal{U}'$ is Leray.

By definition of the connecting homomorphism, this element equals $\psi(\alpha)$. According to Serre duality on $X$ (see \cite[17.5]{Fo}), the pairing of $\psi(\alpha)$ with $g$ is equal to
\[
\sum_{p\in X} \mathrm{Res}_p\big(\pi^*(g\cdot\gamma)\big)=\sum_{p\in\pi^{-1}[\{q\}]} \mathrm{Res}_p\big(\pi^*(g\cdot \gamma)\big)=\sum_{p\in\pi^{-1}[\{q\}]} \mathrm{Res}_p\big(\pi^*(g\cdot \alpha)\big).
\]
Here, the first equality follows because of $\Omega_{X'}(\mathcal{S}')=\pi_*\Omega_X(\mathcal{S})$ on $U_q\setminus\{q\}$. The last term is the pairing between $\alpha$ and $\varphi(g)$.

The commutativity between $f_2$ and $f_3$ is shown analogously.
\end{proof}
For $q\in X'$, let $n_q:=\dim(\bar{\mathcal{O}}_q/\mathfrak{c}_q)$ with the annihilator $\mathfrak{c}_q$~\eqref{eq:ann} of $\bar{\mathcal{O}}_q/\mathcal{O}_q$. Clearly, $n_q$ vanishes for $q\not\in S'$. Similarly to $\delta_q$, $n_q$ measures the order of the singularity $q\in S'$. In the following proposition, we compare these two integers.
\begin{Proposition}
\begin{enumerate}[(a)]
\item The annihilator $\mathrm{Ann}_q(\Omega_{X'}/\pi_*\Omega_X)$ equals $\mathfrak{c}_q$~\eqref{eq:ann} for $q\in X'$.
\item For $q\in S'$, one has $\delta_q+1\le n_q\le 2\delta_q$, and $n_q=2\delta_q$ holds if and only if $\Omega_{X',q}$ is a free $\mathcal{O}_q$-module of rank $1$.
\end{enumerate}
\end{Proposition}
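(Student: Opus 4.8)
The plan is to establish~(a) by a residue computation localised at $q$, and then to reduce~(b) to a statement about the Artinian algebra $\bar{\mathcal{O}}_q/\mathfrak{c}_q$ carrying a suitable residue pairing. For~(a) note that both $\Ann_q(\Omega_{X'}/\pi_*\Omega_X)$ and $\mathfrak{c}_q$ are ideals of $\mathcal{O}_q$ (recall $\mathfrak{c}_q\subseteq\mathcal{O}_q$ since $1\in\bar{\mathcal{O}}_q$). If $g\in\mathfrak{c}_q$ and $\omega\in\Omega_{X',q}$, then for every $f\in\bar{\mathcal{O}}_q$ one has $gf\in\mathcal{O}_q$, so $\sum_{p\in\pi^{-1}[\{q\}]}\Res_p\big(\pi^*(f\cdot g\omega)\big)=\sum_p\Res_p\big(\pi^*((gf)\omega)\big)=0$ by regularity of $\omega$; the argument in the proof of Lemma~\ref{L:nondegenerate} shows that a meromorphic $1$-form pairing trivially with all of $\bar{\mathcal{O}}_q$ over $q$ is holomorphic there, so $g\omega\in(\pi_*\Omega_X)_q$ and $g\in\Ann_q(\Omega_{X'}/\pi_*\Omega_X)$. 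Conversely, if $g\in\mathcal{O}_q$ satisfies $g\omega\in(\pi_*\Omega_X)_q$ for all $\omega$, then $f\cdot g\omega$ is holomorphic over $q$ for all $f\in\bar{\mathcal{O}}_q$, hence $\sum_p\Res_p\big(\pi^*((gf)\omega)\big)=0$ for all $\omega$, and the non-degeneracy of the pairing in Lemma~\ref{L:nondegenerate} forces $gf\in\mathcal{O}_q$, so $g\in\mathfrak{c}_q$. (The same reasoning yields the identification $\mathrm{Hom}_{\mathcal{O}_q}(\bar{\mathcal{O}}_q,\Omega_{X',q})=(\pi_*\Omega_X)_q$, which is used below.)

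For~(b), from $\mathfrak{c}_q\subseteq\mathcal{O}_q\subseteq\bar{\mathcal{O}}_q$ one gets $n_q=\delta_q+\dim(\mathcal{O}_q/\mathfrak{c}_q)$, so the two estimates amount to $1\le\dim(\mathcal{O}_q/\mathfrak{c}_q)\le\delta_q$. The lower bound is immediate: $\mathfrak{c}_q=\mathcal{O}_q$ would make $\mathcal{O}_q$ an ideal of $\bar{\mathcal{O}}_q$ containing $1$, forcing $\bar{\mathcal{O}}_q=\mathcal{O}_q$, impossible for $q\in S'$. For the upper bound set $B:=\bar{\mathcal{O}}_q/\mathfrak{c}_q$, a $\mathbb{C}$-algebra of dimension $n_q$, and $A:=\mathcal{O}_q/\mathfrak{c}_q\subseteq B$, so $\dim A=n_q-\delta_q$. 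Since $\mathfrak{c}_q$ is an ideal of $\bar{\mathcal{O}}_q=\bigoplus_p\mathcal{O}_{X,p}$, it equals $\bigoplus_p r_p^{e_p}$ with $e_p\ge 1$, so $B=\bigoplus_p\mathcal{O}_{X,p}/r_p^{e_p}$ is Artinian with one-dimensional minimal ideals $\mathfrak{s}_p$, the socles of its local factors. Each $\omega\in\Omega_{X',q}$ gives a functional $\lambda_\omega(b)=\sum_p\Res_p\big(\pi^*(b\,\omega)\big)$ on $\bar{\mathcal{O}}_q$ which, $\omega$ being regular and $\mathfrak{c}_q\subseteq\mathcal{O}_q$, kills $\mathcal{O}_q$; hence it descends to $B$ and vanishes on the subalgebra $A$, and $\omega\mapsto\lambda_\omega$ has kernel $(\pi_*\Omega_X)_q$, so its image is a $\delta_q$-dimensional subspace $V$ of the dual space $B^\ast$. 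The crucial claim is that for a suitable $\omega$ the symmetric form $(b,b')\mapsto\lambda_\omega(b\,b')$ on $B$ is non-degenerate; granting this, $A\subseteq A^\perp$ (being a subalgebra annihilated by $\lambda_\omega$) while $\dim A^\perp=n_q-\dim A$, so $2\dim A\le n_q$, i.e. $n_q\le 2\delta_q$.

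To prove the claim, observe that by~(a) we have $\mathfrak{c}_q\,\Omega_{X',q}\subseteq(\pi_*\Omega_X)_q$, so the $p$-component of a regular form has pole order at most $e_p$, and $\lambda_\omega(\mathfrak{s}_p)\ne 0$ exactly when that pole order is $e_p$. Such an $\omega$ exists for each $p$: otherwise $c\,\omega$ would be holomorphic over $q$ for every regular $\omega$ and every $c$ in the $\bar{\mathcal{O}}_q$-ideal $\mathfrak{c}'_q$ obtained from $\mathfrak{c}_q$ by lowering the exponent at $p$ by one, and the non-degeneracy in Lemma~\ref{L:nondegenerate} would then force $\mathfrak{c}'_q\subseteq\mathcal{O}_q$, contradicting the maximality of the conductor among $\bar{\mathcal{O}}_q$-ideals contained in $\mathcal{O}_q$. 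As $V$ is a linear subspace of $B^\ast$ not contained in any of the finitely many hyperplanes $\{\lambda(\mathfrak{s}_p)=0\}$, some $\omega$ has $\lambda_\omega(\mathfrak{s}_p)\ne 0$ for all $p$; then the kernel of $b\mapsto\lambda_\omega(b\,\cdot\,)$ contains no minimal ideal of $B$, hence is zero, proving the claim. For the equality case, transporting $V$ along the $B$-module isomorphism $B^\ast\cong B$ induced by the chosen $\lambda_\omega$ identifies $V$ with an $A$-submodule of $B$ containing $1$, hence containing $A$; if $n_q=2\delta_q$, a dimension count gives equality, which unwinds to $\Omega_{X',q}=\mathcal{O}_q\,\omega$ (using $(\pi_*\Omega_X)_q=\mathfrak{c}_q\,\omega\subseteq\mathcal{O}_q\,\omega$ for this $\omega$), so $\Omega_{X',q}$ is free of rank $1$. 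Conversely, if $\Omega_{X',q}=\mathcal{O}_q\,\omega_0$, then by the identification in~(a), $(\pi_*\Omega_X)_q=\mathrm{Hom}_{\mathcal{O}_q}(\bar{\mathcal{O}}_q,\mathcal{O}_q\,\omega_0)=\mathfrak{c}_q\,\omega_0$, whence $\dim(\mathcal{O}_q/\mathfrak{c}_q)=\dim(\Omega_{X',q}/(\pi_*\Omega_X)_q)=\delta_q$ and $n_q=2\delta_q$.

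The step I expect to be the main obstacle is the construction of the non-degenerate residue form on $\bar{\mathcal{O}}_q/\mathfrak{c}_q$ — equivalently, the existence of a regular differential attaining the maximal pole order $e_p$ simultaneously at every point of $\pi^{-1}[\{q\}]$. This rests on combining the defining maximality of the conductor with the reflexivity of the residue pairing, and it requires some care to keep the argument uniform over all points of the fibre; everything else is bookkeeping with the identities $n_q=\delta_q+\dim(\mathcal{O}_q/\mathfrak{c}_q)$ and $\mathfrak{c}_q\,\Omega_{X',q}\subseteq(\pi_*\Omega_X)_q$.
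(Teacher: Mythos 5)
Your argument is correct, and its skeleton is the paper's: part~(a) is proved exactly as in the paper via the pairing of Lemma~\ref{L:nondegenerate}, and the heart of~(b) is the same key construction, namely the decomposition \,$\mathfrak{c}_q=\bigoplus_{p\in\pi^{-1}[\{q\}]}r_p^{e_p}$\, (the paper's \,$\tilde{\mathfrak{c}}=\mathcal{O}_{-D}$\,) together with the existence of a regular form whose pole order at every \,$p$\, is exactly \,$e_p$\,, which you obtain from~(a) and the maximality of the conductor just as the paper's ``otherwise \,$\pi_*\mathcal{O}_{-D}\subsetneq\mathfrak{c}$\,'' step, the generic linear combination replacing the paper's ``suitable linear combination''. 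Where you diverge is the final bookkeeping: the paper feeds this \,$\omega$\, into the map \eqref{eq:conductor-f}, \,$f\mapsto f\cdot\omega$\,, whose injectivity into the \,$\delta_q$-dimensional space \,$\Omega_{X',q}/(\pi_*\Omega_X)_q$\, gives \,$n_q\le 2\delta_q$\, and whose bijectivity characterises the equality case, while you turn \,$B=\bar{\mathcal{O}}_q/\mathfrak{c}_q$\, into a Frobenius algebra via the residue functional \,$\lambda_\omega$\, (non-degeneracy via the socles), bound \,$\dim(\mathcal{O}_q/\mathfrak{c}_q)$\, by isotropy of the subalgebra \,$A$\,, handle the equality case through the self-duality \,$B\cong B^\ast$\,, and prove the converse by computing \,$(\pi_*\Omega_X)_q=\mathfrak{c}_q\,\omega_0$\, directly instead of arguing about the pole orders of the free generator. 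Your packaging is slightly heavier but makes explicit the Gorenstein-type duality on the conductor quotient and proves a marginally stronger non-degeneracy statement (over all of \,$\bar{\mathcal{O}}_q$\,, not just \,$\mathcal{O}_q$\,); the paper's version is more economical. Two small points you assert without proof are true and easy and worth a line each: \,$e_p\ge 1$\, for all \,$p$\, (an idempotent in \,$\mathfrak{c}_q\subset\mathcal{O}_q\subset\C+r_q$\, would force \,$\delta_q=0$\,) and \,$e_p<\infty$\, (from \,$r_q^n\subset\mathfrak{c}_q$\, in the proof of Proposition~\ref{prop1}), and likewise the freeness at the very end rests on multiplication by \,$\omega$\, being injective because \,$\omega$\, is non-zero in every component of \,$\bar{\mathcal{O}}_q$\,.
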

\begin{proof}
\emph{(a)} Let $g\in\mathfrak{c}_q$. Then $g\cdot f\in\mathcal{O}_q$ for all $f\in\bar{\mathcal{O}}_q$. This implies that the pairing \eqref{eq:Res} of $gf$ with $\omega$ and therefore also the pairing of $f$ with $g\omega$ vanishes for all $f\in \mathcal{O}_q$ and all $\omega\in\Omega_{X',q}$. Due to Lemma~\ref{L:nondegenerate}, $g\cdot \omega\in (\pi_*\Omega_X)_q$ for all $\omega \in \Omega_{X',q}$, so $g\in \mathrm{Ann}_q(\Omega_{X'}/\pi_*\Omega_X)$.

Conversely, for $g\in \mathrm{Ann}_q(\Omega_{X'}/\pi_*\Omega_X)$, it is $g\cdot\omega\in(\pi_*\Omega_X)_q$ for all $\omega\in \Omega_{X',q}$. Again, the pairing in \eqref{eq:Res} of $f$ with $g\omega$ and of $gf$ with $\omega$ vanishes for all $\omega\in\Omega_{X',q}$ and all $f\in\mathcal{O}_q$. Due to Lemma~\ref{L:nondegenerate}, $g\cdot f\in\mathcal{O}_q$ for all $f\in\bar{\mathcal{O}}_q$ and hence $g\in \mathfrak{c}_q$.

\emph{(b)} 
From $\mathbb{C}+\mathfrak{c}_q\subset \mathcal{O}_q$, see \eqref{eq:inclusions}, we get $\bar{\mathcal{O}_q}/\mathcal{O}_q+\mathbb{C}\subset\bar{\mathcal{O}}_q/\mathfrak{c}_q$ and thus $1+\delta_q\le n_q$.
Since $\mathfrak{c}$ is a sheaf of ideals of $\bar{\mathcal{O}}_{X'}$, it is an $\bar{\mathcal{O}}_{X'}$-module and hence the direct image with respect to $\pi$ of a sheaf $\tilde{\mathfrak{c}}$ of ideals of $\mathcal{O}_X$. Here, we use the fact that $\bar{\mathcal{O}}_{X',q}$ separates the points in $\pi^{-1}[\{q\}]$. The sheaf $\tilde{\mathfrak{c}}$ is a coherent subsheaf of $\mathcal{O}_X\subset\mathcal{M}_X$ and therefore a generalised divisor on $X$. Since $X$ is smooth, there exists a divisor $D\ge 0$ on $X$ with $\tilde{\mathfrak{c}}=\mathcal{O}_{-D}$. The support of $D$ is contained in $S$ and we write $D=\sum_{p\in S}n_p\cdot p$. In particular, $n_q=\sum_{p\in\pi^{-1}[\{q\}]}n_p$. Because of (a), there exists for every $p\in \pi^{-1}[\{q\}]$ a form $\omega_p\in\Omega_{X',q}$ such that $\pi^*\omega_p$ has a pole of order $n_p$ at $p$. Otherwise, $\pi_*\mathcal{O}_{-D}\subsetneq\mathfrak{c}$. Let $\omega$ be a suitable linear combination of the $\omega_p$'s such that $\pi^*\omega$ has a pole of order $n_p$ at all $p\in S$. Due to (a), the map
\begin{equation}\label{eq:conductor-f}
\mathcal{O}_q/\mathfrak{c}_q\to \Omega_{X',q}/(\pi_*\Omega_X)_{q}\;,\qquad f\mapsto f\cdot\omega
\end{equation}
is injective. The alternating sum of dimensions of the exact sequence
\[
0\to\mathcal{O}_q/\mathfrak{c}_q\to\bar{\mathcal{O}}_q/\mathfrak{c}_q\to\bar{\mathcal{O}}_q/\mathcal{O}_q\to 0
\]
vanishes and so $\dim(\mathcal{O}_q/\mathfrak{c}_q)=n_q-\delta_q\le\delta_q$, i.e. $n_q\le 2\delta_q$. 

For $n_q=2\delta_q$, the map \eqref{eq:conductor-f} is an isomorphism. So every $\alpha \in \Omega_{X',q}$ can be represented as $\alpha=f\cdot\omega+\beta$ with $f\in\mathcal{O}_q,\beta\in(\pi_*\Omega_X)_q$. By definition of $\omega$, $\beta$ can be written as $g\cdot \omega$ with $g\in\mathfrak{c}_q\subset\mathcal{O}_q$. This implies that $\Omega_{X',q}$ has rank $1$ and is a free $\mathcal{O}_q$-module.

Conversely, let $\Omega_{X',q}$ be a free $\mathcal{O}_q$-module. Then, there exists one generator of this module. The pullback of this generator via $\pi$ has a pole of order $n_p$ at all $p\in S$ and no other poles. Otherwise, $\mathrm{Ann}(\Omega_{X'}/\pi_*\Omega_X)\subsetneq\pi_*\mathcal{O}_{-D}$. Therefore, we can choose $\omega$ in \eqref{eq:conductor-f} to be this generator and \eqref{eq:conductor-f} is surjective. This implies $n_q=2\delta_q$.
\end{proof}


\begin{Corollary}
If \,$X'$\, is of arithmetic genus \,$g'$\, and \,$\Sss'$\, a generalised divisor on \,$X'$\,, then:
\begin{enumerate}[(a)]
\item
\,$\deg(\Omega_{X'})=2g'-2$\,.
\item 
\,$\deg(\Omega_{X'}(\Sss'))=2g'-2-\deg(\Sss')$\,.
\item
If \,$\deg(\Sss')> 2g'-2$\,, then \,$H^1(X',\Sss') $\, is trivial.
\end{enumerate}  
\end{Corollary}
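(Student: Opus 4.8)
The plan is to reduce everything to the Riemann--Roch theorem (Theorem~\ref{T:riemannroch}) and Serre duality (Theorem~\ref{T:serre}), regarding $\Omega_{X'}$ and $\Omega_{X'}(\Sss')$ as generalised divisors by Theorem~\ref{T:Omega-gendiv} together with the identification of finitely generated $\Oss_{X'}$-submodules of $d\Mss_{X'}$ with generalised divisors discussed just before that theorem (this identification is multiplication by a fixed $df$, an isomorphism of sheaves, so it preserves $H^0$ and $H^1$ and the degree).

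I would establish (b) first. Writing the Riemann--Roch identity for $\Sss'$ and for $\Omega_{X'}(\Sss')$ gives
\[
\dim H^0(X',\Sss')-\dim H^1(X',\Sss')=\deg\Sss'+1-g'
\]
and
\[
\dim H^0(X',\Omega_{X'}(\Sss'))-\dim H^1(X',\Omega_{X'}(\Sss'))=\deg\Omega_{X'}(\Sss')+1-g' \, .
\]
By Serre duality, $\dim H^0(X',\Omega_{X'}(\Sss'))=\dim H^1(X',\Sss')$ and $\dim H^1(X',\Omega_{X'}(\Sss'))=\dim H^0(X',\Sss')$, so the left-hand side of the second identity is the negative of that of the first. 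Adding the two identities makes all cohomology terms cancel and leaves $0=\deg\Sss'+\deg\Omega_{X'}(\Sss')+2-2g'$, which is exactly (b). Part (a) is then the special case $\Sss'=\Oss_{X'}$: one has $\Omega_{X'}(\Oss_{X'})=\Omega_{X'}$ because $1\in\Oss_{X'}$ and $\Omega_{X'}$ is an $\Oss_{X'}$-module, and $\deg\Oss_{X'}=0$ directly from the definition of the degree, so $\deg\Omega_{X'}=2g'-2-0$. Equivalently, (a) can be checked by hand from $\dim H^0(X',\Omega_{X'})=\dim H^1(X',\Oss_{X'})=g'$ and $\dim H^1(X',\Omega_{X'})=\dim H^0(X',\Oss_{X'})=1$ (Serre duality and Lemma~\ref{L:dimH}) together with Riemann--Roch.

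For (c), Serre duality identifies $H^1(X',\Sss')$ with the dual of $H^0(X',\Tss)$, where $\Tss:=\Omega_{X'}(\Sss')$, and by (b) we have $\deg\Tss=2g'-2-\deg\Sss'<0$. So it suffices to prove the general fact that a generalised divisor $\Tss$ of negative degree on the compact connected curve $X'$ has no nonzero global section. Assume $0\neq\omega\in H^0(X',\Tss)$. Since $\Tss$ is an $\Oss_{X'}$-module containing $\omega$, the subsheaf $\omega\Oss_{X'}$ of $d\Mss_{X'}$ generated by $\omega$ is contained in $\Tss$; and because $X$ is connected and $\omega\neq 0$, multiplication by $\omega$ is injective, so $\omega\Oss_{X'}\cong\Oss_{X'}$ as sheaves. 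In the short exact sequence $0\to\omega\Oss_{X'}\to\Tss\to Q\to 0$ the quotient $Q$ has finite support (it vanishes outside the finite sets $\supp\Tss$ and $\supp(\omega\Oss_{X'})$), hence $H^1(X',Q)=0$; comparing Euler characteristics and using Riemann--Roch for $\omega\Oss_{X'}\cong\Oss_{X'}$ and for $\Tss$ yields $\dim H^0(X',Q)=(\deg\Tss+1-g')-(1-g')=\deg\Tss<0$, which is impossible. Hence $H^0(X',\Tss)=0$ and therefore $H^1(X',\Sss')=0$.

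Everything here is routine dimension bookkeeping except the last step: on a singular curve one cannot argue, as on a Riemann surface, that a nonzero section has nonnegative divisor, so the vanishing has to be routed through monotonicity of the degree on subsheaves, and this monotonicity is cleanest to extract from Riemann--Roch itself rather than from the definition of the degree. This is the only place where some care is needed.
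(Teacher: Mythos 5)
Your proposal is correct, but for part (b) it takes a genuinely different route from the paper. The paper computes \,$\deg(\Omega_{X'}(\Sss'))$\, locally through the normalisation: it combines the classical formula \,$\deg(\Omega_X(\Sss))=\deg(\Omega_X)-\deg(\Sss)$\, on \,$X$, the relations \,$\deg(\pi_*\Sss)=\deg(\Sss)+\delta$\, and \,$\deg(\pi_*\Omega_X(\Sss))=\deg(\Omega_X(\Sss))+\delta$\, coming from \eqref{eq:deg-deg}, and the non-degeneracy of the local residue pairing \eqref{eq:serre-pairing} to get \,$\deg(\Omega_{X'}(\Sss'))-\deg(\pi_*\Omega_X(\Sss))=\deg(\pi_*\Sss)-\deg(\Sss')$\,, and then uses \,$g'=g+\delta$\,. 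You instead add the two Riemann--Roch identities (Theorem~\ref{T:riemannroch}) for \,$\Sss'$\, and for \,$\Omega_{X'}(\Sss')$\, (viewed as a generalised divisor via division by a fixed \,$df$) and let global Serre duality (Theorem~\ref{T:serre}) cancel all cohomology terms; this is shorter and purely global, and it avoids \eqref{eq:deg-deg} and the local pairing altogether, whereas the paper's computation exhibits the structural identity \,$\deg\Omega_{X'}(\Sss')=\deg\Omega_X(\Sss)+2\delta$\, and needs only the local non-degeneracy rather than the full duality theorem. Part (a) you handle essentially as the paper does (or as the case \,$\Sss'=\Oss_{X'}$\, of (b)), and in (c) both arguments reduce via (b) and Serre duality to the statement that a generalised divisor of negative degree has no nonzero global section; the paper dispatches this in one line (global meromorphic functions have degree \,$0$, i.e. degree monotonicity under the inclusion \,$f\Oss_{X'}\subset\tilde{\Sss}'$), while you actually prove it by comparing Euler characteristics along \,$0\to\omega\Oss_{X'}\to\Tss\to Q\to 0$\,, which is a legitimate and more detailed substitute. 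One small caveat: the injectivity of multiplication by \,$\omega$\, (hence \,$\omega\Oss_{X'}\cong\Oss_{X'}$) requires that \,$\omega$\, does not vanish identically on any component of the normalisation \,$X$, so the hypothesis you need is that \,$X$\, (not merely \,$X'$) is connected, or else the argument should be run componentwise; since the paper's one-line version (and its computation \,$\dim H^0(X',\Oss_{X'})=1$\, in Lemma~\ref{L:dimH}) rests on the same implicit assumption, this is not a gap relative to the paper's own level of detail.
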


\begin{proof}
\emph{(a)}
For \,$\Sss'=\Oss_{X'}$\, we obtain by Serre duality (Theorem~\ref{T:serre})
\begin{align*}
g'& =\dim(H^1(X',\Oss_{X'}))=\dim(H^0(X',\Omega_{X'})) \;, \\
1 & = \dim(H^0(X',\Oss_{X'})) = \dim(H^1(X',\Omega_{X'}))  \;. 
\end{align*}
The Riemann Roch Theorem \ref{T:riemannroch} now implies the claimed statement.

\emph{(b)}
%
%
We first note that  \,$\deg(\Omega_X(\Sss))= \deg(\Omega_X)-\deg(\Sss)$\,, see \cite[\S 17]{Fo}, \,$\deg(\pi_* \Sss) = \deg(\Sss)+\delta$\, and \,$\deg(\pi_* \Omega_X(\Sss)) = \deg(\Omega_X(\Sss))+\delta$\,, see the left-hand side equation of \eqref{eq:deg-deg}. The non-degeneracy of the pairing in \eqref{eq:serre-pairing} implies
$$ \deg(\Omega_{X'}(\Sss'))- \deg(\pi_*\Omega_X(\Sss)) = \deg(\pi_*\Sss)-\deg(\Sss') 
$$
and hence,
\begin{align*}
\deg (\Omega_{X'}(\Sss'))
& = \deg(\pi_*\Omega_X(\Sss)) + \deg(\pi_*\Sss)- \deg(\Sss') \\
& = \deg (\Omega_X(\Sss))+\delta+\deg(\Sss)+\delta-\deg(\Sss') \\
& = \deg(\Omega_X)+2\delta-\deg(\Sss') \\
& = 2g-2+2\delta-\deg(\Sss') = 2g'-2-\deg(\Sss') \; . 
\end{align*}

\emph{(c)} 
Because the global meromorphic functions on \,$X'$\, have degree \,$0$, for every generalised divisor \,$\tilde{\Sss}'$\, on \,$X'$\, of negative degree, \,$H^0(X',\tilde{\Sss}')$\, is trivial. In particular, if \,$\deg(\Sss') > 2g'-2$\,, then \,$\deg(\Omega_{X'}(\Sss')) = \deg(\Omega_{X'}) - \deg(\Sss') < 0$\, and therefore, \,$H^0(X',\Omega_{X'}(\Sss'))=0$\,, whence \,$H^1(X',\Sss')=0$\, follows by Serre duality (Theorem~\ref{T:serre}).
\end{proof}

\section{The Krichever construction}\label{se:krichever}
As preparation for the construction of Baker-Akhiezer functions, we need a certain presentation of the elements of \,$H^1(X',\Oss_{X'})$\, by Mittag-Leffler distributions with support at given marked points. This representation is known as the Krichever construction, see \cite{Krichever}.

In the following, we denote by $H$ the algebra of germs of functions that are holomorphic in a punctured neighborhood of $0\in \mathbb{C}$.
We define the subsets
\begin{align*}
&H^+ = \{ h \in H \mid h \text{ extends holomorphically to } 0\} = \C\{z\} \\
&H^- = \{ h \in H \mid h \text{ extends holomorphically to } 
\P\backslash\{0\} \text{ with } h(\infty)=0\} \\
&H^-\ind{finite} = \{ h \in H^- \mid h \text{ has a pole at }0\} = z^{-1} \cdot \C[z^{-1}]. 
\end{align*}
There is a decomposition analogous to the Birkhoff factorization:

\begin{Lemma}
	$H = H^+ \oplus H^-$.
\end{Lemma}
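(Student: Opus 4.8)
The plan is to use the Laurent expansion of a germ holomorphic on a punctured disc, split it into its regular and principal parts, and then deduce directness of the sum from the fact that every holomorphic function on the compact connected Riemann surface $\P$ is constant.

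First I would fix $h\in H$ and choose a representative that is holomorphic on a punctured disc $\{0<|z|<\varepsilon\}$ for some $\varepsilon>0$. By Laurent's theorem, $h$ has an expansion $h(z)=\sum_{n\in\Z}a_n z^n$ converging locally uniformly there, and the coefficients $a_n$ depend only on the germ $h$. Set
\[
h^+(z):=\sum_{n\geq 0}a_n z^n,\qquad h^-(z):=\sum_{n\leq -1}a_n z^n .
\]
The first series has radius of convergence at least $\varepsilon$, so its germ at $0$ lies in $H^+=\C\{z\}$. For the second series, the principal part of the Laurent expansion of a function holomorphic on the \emph{entire} punctured disc converges for every $z\neq 0$; substituting $w=1/z$ turns $h^-$ into $\sum_{m\geq 1}a_{-m}w^m$, a power series converging for all $w\neq 0$, hence of infinite radius of convergence, i.e.\ an entire function of $w$ vanishing at $w=0$. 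Therefore $h^-$ extends holomorphically to $\P\setminus\{0\}$ with $h^-(\infty)=0$, so $h^-\in H^-$. Since $h=h^++h^-$ on a punctured neighbourhood of $0$, this gives $H=H^++H^-$.

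It remains to check that the sum is direct, i.e.\ $H^+\cap H^-=\{0\}$. If $h\in H^+\cap H^-$, then as a function it extends holomorphically across $0$ (being in $H^+$) and also to all of $\P\setminus\{0\}$ (being in $H^-$); these two extensions agree on a punctured neighbourhood of $0$, so they glue to a holomorphic function on the whole of $\P$. Since $\P$ is compact and connected, this function is constant, and because its value at $\infty$ is $0$ by the defining property of $H^-$, it vanishes identically. Hence $H^+\cap H^-=\{0\}$ and $H=H^+\oplus H^-$.

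There is essentially no obstacle here: the argument is Laurent's theorem together with the maximum principle on $\P$. The only point requiring a little care is the bookkeeping between a germ and its representatives — in particular, confirming that the principal part $h^-$, a priori defined only near $0$, genuinely extends holomorphically across $\infty$ — which is immediate once one observes that the principal part converges on all of $\C^*$, equivalently that the associated power series in $1/z$ is entire.
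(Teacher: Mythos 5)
Your proof is correct. The only difference from the paper is how the splitting is produced: you invoke Laurent's theorem and split the expansion into its regular part $\sum_{n\geq 0}a_nz^n$ and principal part $\sum_{n\leq -1}a_nz^n$, whereas the paper defines $h^{+}$ and $h^{-}$ directly by Cauchy integrals $\frac{1}{2\pi i}\oint \frac{h(z')\,dz'}{z'-z}$ along two contours (one enclosing both $0$ and $z$, one enclosing only $0$), and reads off the holomorphy of $h^{+}$ near $0$ and of $h^{-}$ on $\P\setminus\{0\}$ with $h^{-}(\infty)=0$ from these integral representations. Both routes produce the same decomposition into regular and principal part, and your argument for directness --- gluing the two extensions to a holomorphic function on the compact surface $\P$, hence constant, hence zero because it vanishes at $\infty$ --- is exactly the paper's. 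Your version is the more elementary textbook argument; the paper's Cauchy-kernel construction is the form that survives in settings where one cannot conveniently split a series termwise (e.g.\ the Birkhoff factorization the paper alludes to), but for this lemma the two are interchangeable. The one point you rightly flagged --- that the principal part converges on all of $\C^{*}$ because the inner radius of the annulus of convergence is $0$ --- is handled correctly.
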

\begin{proof}
	For any $h\in H$, let $h^+(z)=\frac{1}{2\pi\imath}\oint\frac{h(z')dz'}{z'-z}$. Here, the integral is taken along a path in the domain of definition of $h$ around $z$ and $0$ in the anti-clockwise order. Moreover, let $h^-(z)=\frac{1}{2\pi\imath}\oint\frac{h(z')dz'}{z'-z}$. This time the integral is taken along a path in the domain of definition of $h$ around $0$, but not around $z$, in the clockwise order. Since the form $\frac{h(z')dz'}{z'-z}$ is closed, these integrals do not depend on the choice of the path of integration. Due to Cauchy's integral formula, we have $h=h^++h^-$. Moreover, $h^+$ is holomorphic in a neighbourhood of $0$ and $h^-$ is holomorphic on $\P\setminus\{0\}$ and vanishes at $\infty$.
	
	The intersection $H^+\cap H^-$ contains holomorphic function on $\P$ which are constant. Since they vanish at $\infty$, they are identically zero.
\end{proof}

Let $X'$ be a compact singular curve with marked smooth points $q_1,\ldots,q_n \in X'$ and holomorphic charts $z_1,\ldots,z_n$ centered at those points, i.e.\ $z_i(q_i)=0$. 

For any $(h_1,\ldots,h_n)\in H^n$, one can choose disjoint smooth open neighborhoods $U_1,\ldots,U_n$ of $q_1,\ldots,q_n$ such that $z_i^\ast h_i$ is defined on $U_i\backslash\{q_i\}$. Together with $U_0 = X'\backslash\{q_1,\ldots,q_n\}$, we get a cover $\mathcal{U} =\{ U_0,U_1,\ldots,U_n\}$  of $X'$. Since the only non-empty pairwise intersections are of the form $U_0\cap U_i=U_i \setminus\{0\}$, $z_i^\ast h_i$ is holomorphic on these intersections and defines an element of \,$C^1(\mathcal{U},\Oss_{X'})$\,. Because the intersection of every triple of distinct \,$U_i$\, is empty, we have \,$C^2(\mathcal{U},\Oss_{X'})=0$\, and hence, \,$(h_1,\dotsc,h_n)$\, defines a cocycle and induces an element of \,$H^1(\mathcal{U},\Oss_{X'})$\,. 
Since $\mathcal{U}$ is a Leray cover of $X'$ for $\Oss_{X'}$ as in the proof of Lemma~\ref{L:dimH}, we obtain $H^1(\mathcal{U},\Oss_{X'})=H^1(X',\Oss_{X'})$ and the surjective map
\begin{align}\label{eq:krichever map}
\varphi:H^n&\to H^1(X',\Oss_{X'}) \;.
\end{align}
The Serre Duality theorem \ref{T:serre} states that 
\begin{align}\label{eq:serre duality}
\bigr( \, \varphi(h_1,\ldots,h_n)\,,\, \omega \,\bigr) & \mapsto 
\sum_{i=1}^n \Res_{q_i} z_i^\ast h_i\,\omega
\end{align}
defines a non-degenerate pairing between  $\varphi(h_1,\ldots,h_n)\in H^1(X',\Oss_{X'})$
and $\omega\in H^0(X',\Omega_{X'})$.

If $\varphi(h_1,\ldots,h_n)$ is the coboundary of an element of $C^0(\mathcal{U},\Oss_{X'})$, then for all $\omega\in H^0(X',\Omega_{X'})$, the sum of the residues at $q_1,\ldots,q_n$ of the product $\omega f$ with the corresponding holomorphic function $f:U_0\to\mathbb{C}$ vanishes. Therefore, \eqref{eq:serre duality} indeed vanishes on all elements in the kernel of $\varphi$ and defines a pairing between $H^0(X',\Omega_{X'})$ and $H^1(X',\Oss_{X'})$.

%

Each element $(h_1,\ldots,h_n)\in (H^-\ind{finite})^n$ defines a Mittag-Leffler distribution on $X'$. A solution is a meromorphic function $f$ on $X'$ with the same principal parts at $q_1,\ldots,q_n$, i.e.\ $f-z_i^\ast h_i$ is holomorphic on $U_i$, and \,$f$\, is holomorphic on \,$U_0$\,. The following lemma implies that the distribution $(h_1,\ldots,h_n)\in (H^-\ind{finite})^n$ has a solution if and only if $\varphi(h_1,\ldots,h_n)=0$, i.e.\ if $\sum_{i=1}^n \Res_{q_i} z_i^\ast h_i \,\omega=0$ for all $\omega \in H^0(X',\Omega_{X'})$.

\begin{Lemma}\label{L:phi kernel image}
	\begin{itemize}
		\item[(i)] The kernel of $\varphi$ is equal to the set of those elements $(h_1,\ldots,h_n)$ $\in H^n$ that admit a holomorphic function $f$ on $U_0$ such that  $f-z_i^\ast h_i$ is holomorphic at $0$. In particular, $(H^+)^n$ is contained in the kernel of $\varphi$.
		\item[(ii)] The restriction of \,$\varphi$\, to  $(H^-\ind{finite})^n$ is surjective.
	\end{itemize}
\end{Lemma}
\begin{proof}
	\emph{(i)} 
	This follows from the fact that $(h_1,\ldots,h_n)\in H^n$ is in the kernel of $\varphi$ if and only if the cocycle defined by $z_i^\ast h_i$ is a coboundary. 
	
	\noindent\emph{(ii)}
	 We first conclude $\varphi((H^-)^n)=H^1(X',\Oss_{X'})$ 
from the inclusion $(H^+)^n\subset \ker\varphi$ and the surjectivity of $\varphi$. The space $H^0(X',\Omega_{X'})$ is finite-dimensional. Therefore, there exists $N\in\mathbb{N}$ such that all non-trivial $\omega\in H^0(X',\Omega_{X'})$ vanish at any $q_1,\ldots,q_n$ at most to order $N$. For every \,$\omega \in H^0(X',\Omega_{X'})$\,, the pairing of \,$\omega$\, with \,$\varphi(h)$\, for some \,$h\in (H^-\ind{finite})^n$\, is non-zero by definition of~\eqref{eq:serre duality}. Thus, by the non-degeneracy of the pairing~\eqref{eq:serre duality},  the restriction of the map $\varphi$ to $(H^-\ind{finite})^n$ is surjective.
\end{proof}
From the long exact sequence of $0\rightarrow \mathbb{Z} \rightarrow \Oss_{X'}{\xrightarrow{e^{2\pi i\cdot}}}\Oss_{X'}^\ast\rightarrow 1$, the exact sequence \,$0 \to \Z \to \C \to \C^* \to 0$\, can be split off and thereby, we obtain the exact sequence
\[ 0\rightarrow H^1(X',\mathbb{Z}) \rightarrow H^1(X',\Oss_{X'})
\rightarrow  H^1(X',\Oss_{X'}^\ast) \rightarrow H^2(X',\mathbb{Z})
\rightarrow 0\,, \]
where the map $H^1(X',\Oss_{X'})\rightarrow  H^1(X',\Oss_{X'}^\ast)$ is induced by $\exp(2\pi i\,\cdot\,)$ and the connecting homomorphism $H^1(X',\Oss_{X'}^\ast)\to H^2(X',\mathbb{Z}) \cong \Z$ is the degree map of invertible sheaves. We mention that $H^1(X',\Oss_{X'}^\ast)$ is the Picard group \,$\Pic(X')$\,, i.e.\ the space of isomorphy classes of inver- tible sheaves. The map $H^1(X',\Oss_{X'})\rightarrow  H^1(X',\Oss_{X'}^\ast)$ can be considered as the exponential map of the Lie group \,$\Pic(X')$\,. 

In particular, each $h=(h_1,\ldots,h_n)\in (H^-\ind{finite})^n$ defines a one-parame- ter group 
of cocycles
$z_i^\ast\exp(2\pi i \,t \,h_i)$, where \,$t \in \C$\,.
Since these cocycles are from the \v{C}ech cohomology, one has $[g_\ell]\in H^1(X',\Oss)\simeq H^1(\Uss, \Oss)$.   
For given \,$t\in \C$\,, these cocycles map the holomorphic functions on \,$U_0$\, to holomorphic functions on \,$U_i \setminus \{q_i\}$\,. Transferring the construction of line bundles on smooth Riemann surfaces by cocycles as in \cite[Theorem~29.7]{Fo} yields a holomorphic line bundle on \,$X'$\, whose local sections define a locally free rank \,$1$\, sheaf \,$\Lss_h(t)$\, on \,$X'$\,.  

To show that \,$\Lss_h(t)$\, has a global meromorphic section, we also construct the corresponding line bundle on the normalisation \,$X$\,.  
The only non-empty intersections of two elements of \,$\mathcal{U}$\, are \,$U_0 \cap U_i = U_i \setminus \{q_i\}$\, and therefore smooth. Hence, they can be considered as subsets of \,$X$\,. Therefore, these cocycles together with the trivial line bundle on \,$\pi^{-1}[U_0]$\, define a line bundle on \,$X$\,.
This line bundle has a global meromorphic section, see \cite[Theorem~29.16]{Fo}. 
By our identification of meromorphic functions on \,$X$\, and on \,$X'$\,, this section is also a global meromorphic section of \,$\Lss_h(t)$\,.  By means of this section, we can identify \,$\Lss_h(t)$\, with a generalised divisor which is locally free.

This family is an one-parameter group, i.e.\ $\Lss_h(t+t') =\Lss_h(t)\otimes \Lss_h(t')$ for \,$t,t'\in \C$\,, where \,$\otimes$\, denotes the product in \,$\Pic(X')$\,. Because we have \,$\Lss_h(0)=\Oss_{X'}=\unity_{\Pic(X')}$\,, \,$\Lss_h(t)$\, stays in the unit component $\Pic_0(X')$ of \,$\Pic(X')$\,. \,$\Pic_0(X')$\, is the group of isomorphy classes of degree $0$ bundles. Conversely, every one-parameter group in $\Pic_0(X')$ is obtained that way because \,$\varphi$\, is surjective. This method for construc- ting linear flows on the Picard group is called
\emph{Krichever construction}\index{Krichever construction}. 

\begin{Lemma}\label{Lem:trivial_periodic_flow}
	\begin{itemize}
		\item[(i)] An element $h=(h_1,\ldots,h_n)\in (H^-\ind{finite})^n$ induces the trivial flow, i.e.\ $\Lss_h(t)=\unity_{\Pic(X')}$ for all $t\in \mathbb{C}$ if and only if the corresponding Mittag-Leffler distribution is solvable.  
		\item[(ii)] An element $h=(h_1,\ldots,h_n)\in (H^-\ind{finite})^n$ induces a periodic flow with period \,$T>0$\,, i.e.\ $\Lss_h(T)=\unity_{\Pic(X')}$ if and only if the Mittag-Leffler distribution can be solved by means of a multi-valued function $k$ whose values over a point differ by an element of $\tfrac{1}{T}\cdot \mathbb{Z}$ (i.e.\ $dk$ is an Abelian differential of the second kind with $\int_\gamma dk \in \tfrac{1}{T} \cdot \mathbb{Z}$ for all $\gamma \in H_1(X',\mathbb{Z})$).
	\end{itemize}
\end{Lemma}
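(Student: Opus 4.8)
The plan is to route everything through the exponential sheaf sequence together with Lemma~\ref{L:phi-kernel-image}. By construction, $\Lss_h(t)$ is the image of the class $t\cdot\varphi(h)\in H^1(X',\Oss_{X'})$ under the homomorphism $H^1(X',\Oss_{X'})\to H^1(X',\Oss_{X'}^\ast)=\Pic(X')$ from that sequence, whose kernel is $\Lambda:=\im\bigl(H^1(X',\Z)\to H^1(X',\Oss_{X'})\bigr)$; hence $\Lss_h(t)=\unity_{\Pic(X')}$ if and only if $t\cdot\varphi(h)\in\Lambda$. Since $\Pic_0(X')$ is a complex Lie group with Lie algebra $H^1(X',\Oss_{X'})$ on which this homomorphism is the exponential map, and the latter is a local diffeomorphism near $0$, the subgroup $\Lambda$ is discrete. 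This settles \emph{(i)} at once: if $\Lss_h(t)=\unity_{\Pic(X')}$ for all $t$, then the connected set $\C\cdot\varphi(h)$ lies in the discrete group $\Lambda$, so $\varphi(h)=0$; the converse is obvious; and by the discussion preceding this lemma together with Lemma~\ref{L:phi-kernel-image}(i), $\varphi(h)=0$ is precisely the solvability of the Mittag-Leffler distribution attached to $h$.

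For \emph{(ii)} the same reduction gives that $\Lss_h(T)=\unity_{\Pic(X')}$ if and only if the cocycle $z_i^\ast\exp(2\pi i\,T\,h_i)$ for the cover $\mathcal{U}=\{U_0,U_1,\dots,U_n\}$ is the coboundary of a nowhere-vanishing $0$-cochain, i.e.\ $\exp(2\pi i\,T\,z_i^\ast h_i)=\psi_i/\psi_0$ on $U_0\cap U_i$ with $\psi_0\in\Oss_{X'}^\ast(U_0)$ and $\psi_i\in\Oss_{X'}^\ast(U_i)$. The idea is to pass to logarithmic derivatives. As $\psi_i$ is holomorphic and nowhere zero on the disc $U_i$, the form $\tfrac{1}{2\pi i}\,d\log\psi_i$ is holomorphic across $q_i$; on $U_0\cap U_i$ the displayed identity then shows that $\tfrac{1}{2\pi i}\,d\log\psi_0$, holomorphic on $U_0$, differs from $T\,d(z_i^\ast h_i)$ only by a form holomorphic near $q_i$. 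Hence $\tfrac{1}{2\pi i}\,d\log\psi_0$ extends to a global meromorphic $1$-form $\eta$ on $X'$ that is holomorphic off the $q_i$, has principal part $T\,d(z_i^\ast h_i)$ at $q_i$, and has vanishing residues (each residue equals $T\,\Res_{q_i}d(z_i^\ast h_i)=0$); that is, $\eta$ is an Abelian differential of the second kind, and its periods $\int_\gamma\eta=\tfrac{1}{2\pi i}\int_\gamma d\log\psi_0$ are winding numbers of $\psi_0$, hence integers. After the appropriate rescaling, $\eta$ becomes the Abelian differential $dk$ of the second kind with principal parts $d(z_i^\ast h_i)$ and with $\int_\gamma dk\in T\cdot\Z$ as in the statement, and its multi-valued primitive $k$ solves the distribution. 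The converse runs this backwards: from a multi-valued $k$ solving the distribution with the stated period integrality one sets $\psi_0:=\exp(2\pi i\,T\,k)$, single-valued on $U_0$ precisely because of that integrality, and $\psi_i:=\exp\bigl(2\pi i\,T\,(k-z_i^\ast h_i)\bigr)$, holomorphic and nowhere zero on $U_i$, and checks that they trivialise the cocycle. The equivalence of the two phrasings inside (ii) is then formal: an Abelian differential of the second kind is meromorphic with vanishing residues, so the only multi-valuedness of its primitive comes from the periods $\int_\gamma dk$, and ``the values of $k$ over a point differ by $T\cdot\Z$'' just says that these periods lie in $T\cdot\Z$.

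The genuine work, I expect, sits in this last transition for (ii): one must check that the local branches of $\tfrac{1}{2\pi i}\log\psi_0$ on the punctured discs $U_i\setminus\{q_i\}$, the integer winding numbers of $\psi_0$ around the $q_i$, and the prescribed principal parts $T\,z_i^\ast h_i$ fit together coherently, so that $d\log\psi_0$ really extends across every $q_i$ with exactly the prescribed singular part and without residue, while its periods become the period lattice of $k$. The remaining points are routine: that $\Lss_h(t)$ is trivial in $\Pic(X')$ exactly when its defining cocycle for $\mathcal{U}$ is such a coboundary (equivalently, when the associated line bundle has a nowhere-vanishing global section), the elementary bookkeeping of the residue-free extension, and the rescaling relating the period $T$ of the flow to $dk$. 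The only input used beyond the earlier sections is the discreteness of $\Lambda$ in part (i), i.e.\ that $\Pic_0(X')$ is a complex Lie group.
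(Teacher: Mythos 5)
Your argument is essentially the paper's own. For (i), the paper also reduces the triviality of the flow to $\varphi(h)=0$ via the exponential sequence; it does so by differentiating $\exp(2\pi i\,t\,\varphi(h))=\unity_{\Pic(X')}$ at $t=0$ rather than by invoking discreteness of the image of $H^1(X',\Z)$, but both steps rest on the same Lie-group structure of $\Pic(X')$ asserted before the lemma, and the paper then quotes Lemma~\ref{L:phi-kernel-image}(i) exactly as you do. For (ii), the paper likewise characterises $\Lss_h(T)=\unity_{\Pic(X')}$ by a trivialising $0$-cochain $k_0,\dotsc,k_n$ with $k_0/k_i=z_i^\ast\exp(2\pi i\,T\,h_i)$ and sets $k=\tfrac{1}{2\pi i T}\ln k_0$; your passage through $\tfrac{1}{2\pi i}\,d\log\psi_0$ is the same construction, with the second-kind property and the winding-number periods made explicit where the paper leaves them implicit. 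The one point to repair is the scaling: if the branches of $k$ differ by elements of $T\cdot\Z$, then $\exp(2\pi i\,T\,k)$ is \emph{not} single-valued (its branches get multiplied by $\exp(2\pi i\,T^2 m)$); the trivialisation requires $\exp(2\pi i\,k/T)$, equivalently periods in $\tfrac1T\Z$, and indeed the paper's own $k=\tfrac{1}{2\pi i T}\ln k_0$ solves the Mittag-Leffler distribution with period lattice $\tfrac1T\Z$, not $T\Z$. Correspondingly, no rescaling of your $\eta$ can achieve both principal parts $d(z_i^\ast h_i)$ and periods in $T\cdot\Z$ simultaneously; this tension already exists between the lemma's wording and the paper's proof, so it is not a conceptual gap in your argument, but your converse should be phrased with the lattice $\tfrac1T\Z$ (or with $\psi_0=\exp(2\pi i\,k/T)$) to be internally consistent.
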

\begin{proof}
	\emph{(i)} $\Lss_h(t)$ is trivial for all $t\in \mathbb{C}$ if and only if 
	\[
	\exp(2\pi i\,t\,
	\varphi(h)) = \unity_{\Pic(X')}\;. 
	\] 
	By taking the derivative with respect to \,$t$\, at \,$t=0$\,, we see that this is equivalent to $h\in \ker\varphi$. Lemma~\ref{L:phi kernel image}~(i) implies that $h\in\ker\varphi$ if and only if the Mittag-Leffler distribution admits a solution. 
	
	\noindent\emph{(ii)} 
	Using the preceding lemma again, we see that $\Lss_h(T)=\unity_{\Pic(X')}$ is equivalent to the existence of functions $k_0,\ldots,k_n$ on $U_0,\ldots,U_n$, respectively, with $k_0/k_i= z_i^\ast\exp(2\pi i \,T\, h_i)$. The multi-valued meromorphic function $k=\frac1{2\pi i T} \ln k_0$ has the desired properties.
\end{proof}

The Krichever construction is a key ingredient in the study of integrable systems and their relation to the theory of singular complex curves. Many integrable systems are determined by the following data: A compact singular curve $X'$, which is called the \emph{spectral curve}\index{spectral curve}, with smooth marked points $q_1,\ldots,q_n$ and holomorphic charts $z_1,\ldots,z_n$ centered at those points and two elements $h_1$ and $h_2\in (H^-\ind{finite})^n$. We distinguish between the following three cases: 

\begin{description}
	\item[Case 1] Both flows induced by  $h_1$ and $h_2$ are trivial.  
	\item[Case 2] $h_1$ induces a trivial and $h_2$ a periodic flow. 
	\item[Case 3] Both flows induced by $h_1$ and $h_2$ are periodic. 
\end{description}
The Lax operators corresponding to the integrable systems of Case~1 are meromorphic matrices, see~\cite{AMV}, whereas the systems of Case~2 and Case~3 are infinite-dimensional. In Case~2, the Lax operators are ordinary differential operators and in Case~3, they are partial differential operators. 

The periodic Korteweg de Vries equation belongs to Case~2. It is obtained for $n=1$, $h_1 = 1/z^2$ and $h_2= 1/z$. The corresponding Lax operator is the one-dimensional Schr\"odinger operator~\cite{La}. 

The periodic non-linear Schr\"odinger equation also belongs to Case~2. It is obtained for $n=2$, $h_1 = (1/z,1/z)$ and $h_2=(i/z,-i/z)$. The corresponding Lax operator is the one-dimensional Dirac operator with potentials~\cite{Sch}. 

An example for a system of Case~3 is the system of the double periodic Kadomcev-Petviashvilli equation \cite[Chapter 4]{FKT}. Here, we have $n=1$, $h_1 = 1/z$ and $h_2=2\pi i/z^2$. The Lax operator in this case is the $(1+1)$-dimensional heat operator, see Remark \ref{R:BA}\,(3).

%
\section{Baker-Akhiezer Functions}\label{se:BA}
There are two equivalent concepts to describe line bundles on complex curves: divisors and cocycles. Usually, only one of these is used. Baker-Akhiezer functions combine both concepts to describe sections on families of line bundles, see \cite[Chapter 2, \S 2]{DKN} for smooth curves. 
In this section, we define Baker-Akhiezer functions for general complex curves. 

Baker-Akhiezer functions are uniquely determined by their function-theoretic properties. The following two lemmata are used in the proof of uniqueness. The first is an easy consequence of Serre duality. We give a direct proof.
\begin{Lemma}\label{lem:serre_generalised}
Let $\Sss'\supset \Sss$ be two generalised divisors on $X'$. Then $H^1(X',\Sss)=0$ implies $H^1(X',\Sss')=0$.
\end{Lemma}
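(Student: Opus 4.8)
The plan is to run the long exact cohomology sequence attached to the short exact sequence of sheaves on $X'$
\[ 0 \longrightarrow \Sss \longrightarrow \Sss' \longrightarrow \Sss'/\Sss \longrightarrow 0 . \]
The first step is to check that the quotient sheaf $\Sss'/\Sss$ is supported on a discrete subset of $X'$: whenever $q\notin\supp(\Sss)\cup\supp(\Sss')$, both stalks $\Sss_q$ and $\Sss'_q$ coincide with $\Oss_{X',q}$, so $(\Sss'/\Sss)_q=0$; hence $\supp(\Sss'/\Sss)\subset\supp(\Sss)\cup\supp(\Sss')$, which is discrete by Proposition~\ref{prop3}. The second step records that a sheaf with discrete support has trivial first cohomology — exactly as in the proof of Theorem~\ref{T:riemannroch}, using the argument of \cite[16.7~Lemma]{Fo} — so that $H^1(X',\Sss'/\Sss)=0$.

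With these two observations in hand, the conclusion is immediate: the relevant segment of the long exact sequence reads
\[ H^1(X',\Sss)\longrightarrow H^1(X',\Sss')\longrightarrow H^1(X',\Sss'/\Sss), \]
and since the outer terms vanish — the left one by hypothesis, the right one by the second step — exactness forces $H^1(X',\Sss')=0$, which is the assertion.

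For completeness I would remark that one could argue instead by Serre duality (Theorem~\ref{T:serre}): from $\Sss\subset\Sss'$ one gets $\Omega_{X'}(\Sss')\subset\Omega_{X'}(\Sss)$, hence $H^0(X',\Omega_{X'}(\Sss'))$ embeds into $H^0(X',\Omega_{X'}(\Sss))$, which is dual to $H^1(X',\Sss)=0$; dualizing once more gives the claim. The direct argument above is preferable here precisely because it avoids invoking Serre duality. Either way there is no genuine obstacle; the only point requiring a line of justification is the discreteness of $\supp(\Sss'/\Sss)$, and that is forced immediately by the definition of the support of a generalised divisor.
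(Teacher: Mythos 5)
Your proposal is correct and follows essentially the same route as the paper: the short exact sequence $0\to\Sss\to\Sss'\to\Sss'/\Sss\to 0$, the vanishing $H^1(X',\Sss'/\Sss)=0$ because the quotient has discrete support, and the long exact cohomology sequence forcing $H^1(X',\Sss')=0$. Your explicit justification of the discreteness of $\supp(\Sss'/\Sss)$, and your closing remark on the Serre-duality alternative, merely spell out what the paper leaves implicit (it even notes before the lemma that it is ``an easy consequence of Serre duality'' but gives the direct proof).
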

%
\begin{proof}
Because of $H^1(X',\Sss'/\Sss)=0$, the long exact sequence associated to the short exact sequence
\[
0 \to \Sss \to \Sss' \to \Sss'/\Sss \to 0
\]
is given by
\[
0\hspace{-1mm}\to\hspace{-1mm}H^0(X'\!,\Sss)\hspace{-1mm}\to\hspace{-1mm}H^0(X'\!,\Sss')\hspace{-1mm}\to\hspace{-1mm}H^0(X'\!,\Sss'\!/\!\Sss)\hspace{-1mm}\to\hspace{-1mm}H^1(X'\!,\Sss)\hspace{-1mm}\to\hspace{-1mm}H^1(X'\!,\Sss')\hspace{-1mm}\to\hspace{-1mm}0.
\]
Since by hypothesis $H^1(X',\Sss)=0$, it follows $H^1(X',\Sss')=0$.
\end{proof} 
\begin{Lemma}\label{lem:uniqueness}
Let \,$g'$\, be the arithmetic genus of \,$X'$\,,
\,$q_1,\dotsc,q_n \in X' \setminus S'$\, be pairwise different smooth points and \,$\Sss'$\, be a generalised divisor on $X'$ of degree $g'+n-1$ with $\supp(\Sss')\subset X'\setminus\{q_1,\dots, q_n\}$. Then, the linear map
\begin{equation}
\label{eq:H0-mapping}
	H^0(X',\Sss')\to \C^n, \qquad \varphi\mapsto (\varphi(q_1),\dots, \varphi(q_n)) 
\end{equation}
	is an isomorphism if and only if
\begin{equation} 
\label{eq:nonspecial}
	H^0(X',\Sss'_{-q_1-\dots-q_n}) =0,
\end{equation}
where \,$\Sss'_{-q_1-\dots-q_n}$\, is the generalised divisor obtained by multiplying \,$\Sss'$\, with the classical divisor \,$-q_1-\dotsc-q_n$\,. 
\end{Lemma}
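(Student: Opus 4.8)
The plan is to compare \,$\Sss'$\, with the subsheaf \,$\Sss'_{-q_1-\dots-q_n}$\, through the short exact sequence
\[
0 \longrightarrow \Sss'_{-q_1-\dots-q_n} \longrightarrow \Sss' \longrightarrow \Sss'/\Sss'_{-q_1-\dots-q_n} \longrightarrow 0
\]
and to read off the equivalence from the associated long exact cohomology sequence. First I would identify the quotient sheaf. Since \,$q_1,\dots,q_n$\, are smooth points of \,$X'$\, lying outside \,$\supp(\Sss')$\,, the stalk \,$\Sss'_{q_i}$\, is \,$\Oss_{X',q_i}$\, and \,$(\Sss'_{-q_1-\dots-q_n})_{q_i}$\, is the maximal ideal \,$\mathfrak{m}_{q_i}\subset \Oss_{X',q_i}$\,, while the two sheaves coincide at every other point; in particular \,$\Sss'_{-q_1-\dots-q_n}$\, is indeed a subsheaf of \,$\Sss'$\,. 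Hence \,$\Sss'/\Sss'_{-q_1-\dots-q_n}$\, is a skyscraper sheaf concentrated at \,$q_1,\dots,q_n$\, with stalk \,$\Oss_{X',q_i}/\mathfrak{m}_{q_i}\cong \C$\, at \,$q_i$\,. Therefore \,$H^0(X',\Sss'/\Sss'_{-q_1-\dots-q_n})\cong \C^n$\, (with \,$i$-th component the value at \,$q_i$\,), the group \,$H^1(X',\Sss'/\Sss'_{-q_1-\dots-q_n})$\, vanishes because the support is discrete, and under this identification the natural map \,$H^0(X',\Sss')\to H^0(X',\Sss'/\Sss'_{-q_1-\dots-q_n})$\, is precisely the evaluation map \eqref{eq:H0-mapping}.

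Consequently the long exact sequence takes the form
\begin{multline*}
0 \longrightarrow H^0(X',\Sss'_{-q_1-\dots-q_n}) \longrightarrow H^0(X',\Sss') \longrightarrow \C^n \longrightarrow \\ \longrightarrow H^1(X',\Sss'_{-q_1-\dots-q_n}) \longrightarrow H^1(X',\Sss') \longrightarrow 0 ,
\end{multline*}
so the kernel of \eqref{eq:H0-mapping} equals \,$H^0(X',\Sss'_{-q_1-\dots-q_n})$\,. This settles the forward implication immediately: if \eqref{eq:H0-mapping} is an isomorphism it is in particular injective, so \,$H^0(X',\Sss'_{-q_1-\dots-q_n})=0$\,.

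For the converse I would first record that, since the \,$q_i$\, are smooth points outside \,$\supp(\Sss')$\,, multiplying by the classical divisor \,$-q_1-\dots-q_n$\, lowers the degree by \,$n$\,, so that \,$\deg(\Sss'_{-q_1-\dots-q_n}) = \deg(\Sss')-n = g'-1$\,. Riemann--Roch (Theorem~\ref{T:riemannroch}) then gives \,$\dim H^0(X',\Sss'_{-q_1-\dots-q_n}) = \dim H^1(X',\Sss'_{-q_1-\dots-q_n})$\,. Hence the hypothesis \,$H^0(X',\Sss'_{-q_1-\dots-q_n})=0$\, forces \,$H^1(X',\Sss'_{-q_1-\dots-q_n})=0$\, as well, and plugging this into the long exact sequence above shows that \eqref{eq:H0-mapping} is injective (its kernel is \,$H^0(X',\Sss'_{-q_1-\dots-q_n})=0$\,) and surjective (its cokernel injects into \,$H^1(X',\Sss'_{-q_1-\dots-q_n})=0$\,), hence an isomorphism.

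There is no deep obstacle in this argument; the one point requiring care is the local analysis of \,$\Sss'/\Sss'_{-q_1-\dots-q_n}$\, — that it is a skyscraper sheaf with stalk \,$\C$\, at each \,$q_i$\, and that the map \,$H^0(X',\Sss')\to H^0(X',\Sss'/\Sss'_{-q_1-\dots-q_n})$\, from the long exact sequence is literally pointwise evaluation — which works precisely because each \,$q_i$\, is a smooth point avoiding \,$\supp(\Sss')$\,, so locally everything reduces to the classical line-bundle situation. The real content is the degree computation \,$\deg(\Sss'_{-q_1-\dots-q_n})=g'-1$\, together with Riemann--Roch, which is what makes the two a priori distinct vanishing conditions \,$H^0(X',\Sss'_{-q_1-\dots-q_n})=0$\, and \,$H^1(X',\Sss'_{-q_1-\dots-q_n})=0$\, equivalent and thereby closes the circle.
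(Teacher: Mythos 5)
Your proof is correct and follows essentially the same route as the paper: both rest on the degree computation \,$\deg(\Sss'_{-q_1-\dots-q_n})=g'-1$\, and Riemann--Roch to convert the hypothesis \,$H^0(X',\Sss'_{-q_1-\dots-q_n})=0$\, into \,$H^1(X',\Sss'_{-q_1-\dots-q_n})=0$\,. The only difference is organisational: where the paper passes through Lemma~\ref{lem:serre_generalised} and a second application of Riemann--Roch to get \,$\dim H^0(X',\Sss')=n$\, and concludes by a dimension count, you read off surjectivity of the evaluation map directly from the long exact sequence of \,$0\to\Sss'_{-q_1-\dots-q_n}\to\Sss'\to\Sss'/\Sss'_{-q_1-\dots-q_n}\to 0$\, (whose skyscraper quotient also makes explicit that the kernel of \eqref{eq:H0-mapping} is \,$H^0(X',\Sss'_{-q_1-\dots-q_n})$\,, a fact the paper uses without spelling it out).
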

\begin{proof}
If the map \eqref{eq:H0-mapping} is an isomorphism, its kernel $H^0(X',\Sss'_{-q_1-\dots-q_n})$ is zero. 

Conversely, we now suppose that $H^0(X',\Sss'_{-q_1-\dots-q_n})=0$ holds. So the map \eqref{eq:H0-mapping} is injective.  
Because of $\deg \Sss'=g'+n-1$, one has $\deg \Sss'_{-q_1-\dots-q_n}=g'-1$. Hence, the  Riemann-Roch Theorem~\ref{T:riemannroch} implies 
\begin{multline*}
		\dim H^0(X',\Sss'_{-q_1-\dots-q_n}) - \dim H^1(X',\Sss'_{-q_1-\dots-q_n})= \\
		= \deg \Sss'_{-q_1-\dots-q_n} +1-g'=0
\end{multline*}
		and therefore, \,$\dim H^1(X',\Sss'_{-q_1-\dots-q_n}) = \dim H^0(X',\Sss'_{-q_1-\dots-q_n}) = 0$\,. 
So Lemma \ref{lem:serre_generalised} gives \,$\dim H^1(X',\Sss')=0$\, and the Riemann-Roch Theorem yields \,$\dim H^0(X',\Sss')=n$\,. Finally, the map \eqref{eq:H0-mapping} is an isomorphism.
\end{proof}
In the case \,$n=1$\, and \,$\Oss_{X'} \subset \Sss'$, the condition $H^0(X',\Sss'_{-q_1})=0$ already implies that $\supp(\Sss')\subset X'\setminus \{q_1\}$. In fact, otherwise \,$1 \in H^0(X',\Sss'_{-q_1})$\,. For \,$n\geq 2$\,, the condition $H^0(X',\Sss'_{-q_1-\dots-q_n})=0$ only excludes the possibility that all points \,$q_1,\dotsc,q_n$\, are contained in \,$\supp(\Sss')$\,. 
\begin{Remark}\label{R:littlebitmoresingular}
The analogy of the case \,$n>1$\, to the original case \,$n=1$\, is elucidated by replacing \,$X'$\, with the more singular curve \,$X''$\, obtained from \,$X'$\, by identifying \,$q_1,\dotsc,q_n$\, to an ordinary \,$n$-fold point with \,$\delta_{X''}=\delta_{X'}+(n-1)$\,. 
The degree of the corresponding generalised divisor \,$\Sss''$\, on \,$X''$\, equals the arithmetic genus \,$g''$\, as in the case \,$n=1$\,. 
\end{Remark}
We now let \,$g'$\, be the arithmetic genus of \,$X'$\, and fix pairwise different smooth points \,$q_1,\dotsc,q_n \in X' \setminus S'$\,,
a generalised divisor \,$\Sss'$\, on \,$X'$\, of degree \,$g'+n-1$\, with \,$\Oss_{X'} \subset \Sss'$\, and \,$\mathrm{supp}(\Sss') \subset X' \setminus \{q_1,\dotsc,q_n\}$\, and \,$h_{\ell}  = (h_{\ell,1},\dotsc,h_{\ell,n})\in (H^-\ind{finite})^n$ for \,$\ell \in \{1,\dotsc,L\}$\,. In this setting, we will also use the notations from Section~\ref{se:krichever} associated with the Krichever construction and further suppose that \,$U_k \cap \mathrm{supp}(\Sss')  = \varnothing$\, for all \,$k\in \{1,\dotsc,n\}$\,. 
\begin{Lemma}
Every generalised divisor \,$\Sss'$\, of degree \,$g'+n-1$\, satisfying \eqref{eq:nonspecial} is equivalent to a generalised divisor \,$\Sss''$\, with \,$\Oss_{X'} \subset \Sss''$\,, \,$\mathrm{supp}(\Sss'') \subset X' \setminus \{q_1,\dotsc,q_n\}$\, and \,$\Sss''$\, satisfying \eqref{eq:nonspecial}.
\end{Lemma}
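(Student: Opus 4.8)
The plan is to realise \,$\Sss''$\, in the form \,$\Sss''=\varphi^{-1}\cdot\Sss'$\, for a suitably chosen non-zero global meromorphic function \,$\varphi$\, on \,$X'$\,. Multiplication by \,$\varphi^{-1}$\, is an isomorphism of \,$\Oss_{X'}$-modules, so any such \,$\Sss''$\, is automatically equivalent to \,$\Sss'$\,; it has the same degree \,$g'+n-1$\, (degrees being isomorphism invariants, by Riemann-Roch~\ref{T:riemannroch}), and it again satisfies \eqref{eq:nonspecial}, since \,$\Sss''_{-q_1-\dots-q_n}=\varphi^{-1}\cdot\Sss'_{-q_1-\dots-q_n}$\, is isomorphic to \,$\Sss'_{-q_1-\dots-q_n}$\, and hence has the same, vanishing, \,$H^0$\,. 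The real content is to arrange in addition that \,$\Oss_{X'}\subset\Sss''$\, and that no \,$q_i$\, lies in \,$\supp(\Sss'')$\,. The first holds as soon as \,$\varphi$\, is a global section of \,$\Sss'$\,, for then \,$1=\varphi^{-1}\varphi\in\varphi^{-1}\Sss'_q=\Sss''_q$\, at every \,$q\in X'$\,; the second amounts to asking that \,$\varphi$\, have, at each \,$q_i$\,, the least order compatible with membership in \,$\Sss'$\,.

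First I would read off the relevant dimensions from the hypothesis \eqref{eq:nonspecial}. Since \,$\deg\Sss'_{-q_1-\dots-q_n}=g'-1$\,, the Riemann-Roch Theorem~\ref{T:riemannroch} together with \,$H^0(X',\Sss'_{-q_1-\dots-q_n})=0$\, forces \,$H^1(X',\Sss'_{-q_1-\dots-q_n})=0$\,. As \,$\Sss'_{-q_1-\dots-q_n}$\, is a subsheaf both of \,$\Sss'$\, and of each \,$\Sss'_{-q_i}$\,, Lemma~\ref{lem:serre_generalised} propagates this vanishing, so \,$H^1(X',\Sss')=0=H^1(X',\Sss'_{-q_i})$\, for every \,$i$\,; Riemann-Roch then yields \,$\dim H^0(X',\Sss')=n$\, and \,$\dim H^0(X',\Sss'_{-q_i})=n-1$\, for \,$i=1,\dots,n$\,. (I use the standing hypothesis \,$n\geq 1$\, of this section, and that \,$X'$\, is connected, so that non-zero global meromorphic functions have meromorphic inverses.)

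Next I would choose \,$\varphi$\,. By the previous step each \,$H^0(X',\Sss'_{-q_i})$\, is a hyperplane in the \,$n$-dimensional space \,$H^0(X',\Sss')$\,, and as \,$\C$\, is infinite, the finite union \,$\bigcup_{i=1}^{n}H^0(X',\Sss'_{-q_i})$\, does not exhaust \,$H^0(X',\Sss')$\,; I pick \,$\varphi$\, in the complement. Then \,$\varphi\neq 0$\,, and for each \,$i$\, one has \,$\varphi\in\Sss'_{q_i}$\, but \,$\varphi\notin(\Sss'_{-q_i})_{q_i}$\,. Because \,$q_i$\, is a smooth point, near which \,$\Sss'$\, is the classical divisor \,$z_i^{-\deg_{q_i}(\Sss')}\cdot\Oss_{q_i}$\,, this says precisely that \,$\ord_{q_i}(\varphi)=-\deg_{q_i}(\Sss')$\,.

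It then remains to set \,$\Sss'':=\varphi^{-1}\cdot\Sss'$\, and to verify the three claims. We have \,$\Oss_{X'}\subset\Sss''$\, as noted above. At \,$q_i$\,, since \,$\ord_{q_i}(\varphi^{-1})=\deg_{q_i}(\Sss')$\,, we obtain \,$\Sss''_{q_i}=\varphi^{-1}\cdot z_i^{-\deg_{q_i}(\Sss')}\cdot\Oss_{q_i}=\Oss_{q_i}$\,, so \,$q_i\notin\supp(\Sss'')$\,; away from the \,$q_i$\, the support of \,$\Sss''$\, agrees with that of \,$\varphi^{-1}\Sss'$\,, whence \,$\supp(\Sss'')\subset X'\setminus\{q_1,\dots,q_n\}$\,. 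Finally \,$\Sss''$\, is equivalent to \,$\Sss'$\, and, by the first paragraph, again satisfies \eqref{eq:nonspecial}. I do not expect a genuine obstacle here; the one step needing care is the passage from the global membership condition \,$\varphi\in H^0(X',\Sss')\setminus H^0(X',\Sss'_{-q_i})$\, to the pointwise statement about \,$\ord_{q_i}(\varphi)$\,, which is legitimate precisely because \,$\Sss'$\, and \,$\Sss'_{-q_i}$\, differ only at the smooth point \,$q_i$\,.
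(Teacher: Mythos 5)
Your proof is correct and follows essentially the same route as the paper: both use \eqref{eq:nonspecial} and the Riemann-Roch Theorem~\ref{T:riemannroch} together with Lemma~\ref{lem:serre_generalised} to get the needed vanishing of $H^1$, then choose a global section $f$ of $\Sss'$ lying in $H^0(X',\Sss')\setminus H^0(X',\Sss'_{-q_k})$ for every $k$ and set $\Sss''=f^{-1}\cdot\Sss'$. The only (harmless) deviation is how that section is produced: the paper takes $f=f_1+\dotsc+f_n$ with $f_k$ spanning the one-dimensional spaces $H^0(X',\Sss'_{-q_1-\dotsc-\widehat{q_k}-\dotsc-q_n})$, whereas you avoid the finite union of the hyperplanes $H^0(X',\Sss'_{-q_k})$ in the $n$-dimensional space $H^0(X',\Sss')$ directly, an equally valid general-position argument.
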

\begin{proof}
Because of \eqref{eq:nonspecial} and the Riemann-Roch Theorem \ref{T:riemannroch}, we have \,$H^1(X',\Sss'_{-q_1-\dotsc-q_n})=0$\, and therefore also \,$H^1(X',\Sss'_{-q_1-\dotsc-\widehat{q_k}-\dotsc-q_n})=0$\, for every \,$k\in \{1,\dotsc,n\}$\, by Lemma~\ref{lem:serre_generalised}, where \,$\widehat{q_k}$\, stands for the omission of the summand \,$q_k$\,. Again, by application of the Riemann-Roch Theorem \ref{T:riemannroch}, we obtain \,$\dim H^0(X',\Sss'_{-q_1-\dotsc-\widehat{q_k}-\dotsc-q_n})=1$\,. Let \,$f_k$\, be a non-zero element of \,$H^0(X',\Sss'_{-q_1-\dotsc-\widehat{q_k}-\dotsc-q_n})$\,. Then, \,$f := f_1+\dotsc+f_n\in H^0(X',\Sss') \setminus H^0(X',\Sss'_{-q_k})$\, for all \,$k\in\{1,\dotsc,n\}$\,. 

We define \,$\Sss'' := f^{-1}\cdot \Sss'$\,. Then, \,$\Sss''$\, is a generalised divisor of degree \,$g'+n-1$\,. Because of \,$f\in H^0(X',\Sss')$, we have \,$1 \in H^0(X',\Sss'')$\, and therefore \,$\Oss_{X'} \subset \Sss''$\,. Finally, because \,$q_k$\, is a smooth point and \,$f\not\in H^0(X',\Sss'_{-q_k})$\,, we have \,$q_k \not\in \supp(\Sss'')$\, for \,$k\in \{1,\dotsc,n\}$\,. 
\end{proof}
We abbreviate for $h_1,\ldots,h_L\in(H^-)^n$
$$ \Lss_h(t) := \Lss_{h_1}(t_1) \otimes \dotsc \otimes \Lss_{h_L}(t_L) \quad\text{for}\quad t=(t_1,\dotsc,t_L)\in \mathbb{C}^L $$
and define
\begin{equation}\label{eq:T}
	T := \{ t\in \mathbb{C}^L \mid
	\dim H^0(X,\Sss'_{-q_1-\dotsc-q_n}\otimes \Lss_{h}(t)) \neq 0\} \; . 
\end{equation}
We will see that \,$T$\, is a subvariety of \,$\mathbb{C}^L$\, as well as the set of parameters for which the Baker-Akhiezer function is not uniquely defined. For fixed \,$t\in \C^L$\,, we consider $\Sss' \otimes \Lss_{h}(t)$ as sheaf on \,$X'$\,. The same cocycles with variable \,$t\in \C^L$\, induce a sheaf \,$\Lss_h$\, on \,$X' \times \C^L$\,, and we will consider the sheaf \,$\Sss' \otimes \Lss_h$\, on \,$X' \times \C^L$\, (where we also regard \,$\Sss'$\, as a sheaf on \,$X' \times \C^L$\,).
\begin{Lemma}\label{flat sheaf}
The sheaf $\Sss' \otimes \Lss_{h}$ is flat with respect to the map \,$X' \times \mathbb{C}^L \to \mathbb{C}^L$\,. 
\end{Lemma}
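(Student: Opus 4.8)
The plan is to reduce the assertion to the flatness over \,$\C^L$\, of the pullback of \,$\Sss'$\, to \,$X'\times\C^L$\, along the first projection, and then to obtain that from the stability of flatness under base change.

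First I would exploit that \,$\Lss_h$\, is a locally free sheaf of rank \,$1$\, on \,$X'\times\C^L$\,. Indeed, with respect to the open cover \,$\{U_k\times\C^L\}_{k=0}^n$\, of \,$X'\times\C^L$\,, the sheaf \,$\Lss_h$\, is given by the cocycle with components \,$z_i^\ast\exp\bigl(2\pi i\sum_\ell t_\ell h_{\ell,i}\bigr)$\, on the overlaps \,$(U_i\setminus\{q_i\})\times\C^L$\,, so \,$\Lss_h$\, is trivial on each \,$U_k\times\C^L$\,. Denoting by \,$p_1\colon X'\times\C^L\to X'$\, the first projection (so that the sheaf written simply as \,$\Sss'$\, on \,$X'\times\C^L$\, is \,$p_1^\ast\Sss'$\,), we get isomorphisms \,$(\Sss'\otimes\Lss_h)|_{U_k\times\C^L}\cong(p_1^\ast\Sss')|_{U_k\times\C^L}$\, for every \,$k$\,. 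Since flatness over \,$\C^L$\, is a condition that is local on the total space and \,$\{U_k\times\C^L\}$\, covers \,$X'\times\C^L$\,, it suffices to prove that \,$p_1^\ast\Sss'$\, is flat with respect to \,$X'\times\C^L\to\C^L$\,.

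For the latter I would use that \,$\Sss'$\,, as a coherent sheaf on \,$X'$\,, is trivially flat over the reduced point (every module over \,$\C$\, is flat), and that \,$p_1^\ast\Sss'$\, is the pullback of \,$\Sss'$\, under the Cartesian square with edges \,$X'\times\C^L\to\C^L$\,, \,$X'\to\{\mathrm{pt}\}$\, and \,$\C^L\to\{\mathrm{pt}\}$\,; stability of flatness under base change for coherent analytic sheaves then yields that \,$p_1^\ast\Sss'$\, is flat over \,$\C^L$\,. Alternatively, and more hands-on, one can argue by the local criterion of flatness at a point \,$(q,t_0)$\,: the stalk of \,$p_1^\ast\Sss'$\, there is \,$\Sss'_q\,\widehat\otimes_\C\,\Oss_{\C^L,t_0}$\,, i.e.\ convergent power series in coordinates \,$t_1,\dots,t_L$\, centred at \,$t_0$\, with coefficients in the finitely generated \,$\Oss_{X',q}$-module \,$\Sss'_q$\,; the maximal ideal of the regular local ring \,$\Oss_{\C^L,t_0}$\, is generated by the sequence \,$t_1-t_1^0,\dots,t_L-t_L^0$\,, which is visibly regular on that stalk (multiplication by \,$t_j-t_j^0$\, is injective and the successive quotients are of the same shape in fewer variables, the case \,$\Sss'_q=0$\, being trivial), and by the local flatness criterion this gives flatness over \,$\Oss_{\C^L,t_0}$\,.

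The one point that genuinely needs care is the identification of the stalk of \,$p_1^\ast\Sss'$\, at \,$(q,t_0)$\, with \,$\Sss'_q\,\widehat\otimes_\C\,\Oss_{\C^L,t_0}$\,: this follows by applying the two right-exact functors \,$\Oss_{X'\times\C^L,(q,t_0)}\otimes_{\Oss_{X',q}}(-)$\, and \,$(-)\,\widehat\otimes_\C\,\Oss_{\C^L,t_0}$\, to a finite presentation of \,$\Sss'_q$\, over \,$\Oss_{X',q}$\, and comparing, using \,$\Oss_{X'\times\C^L,(q,t_0)}=\Oss_{X',q}\,\widehat\otimes_\C\,\Oss_{\C^L,t_0}$\,. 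Everything else is formal, so I expect no real obstacle and would present the base-change argument as the main line, with the regular-sequence computation as an optional remark.
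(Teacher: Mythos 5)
Your main line is exactly the paper's argument: \,$\Sss'$\, is flat over the one-point space because every \,$\C$-vector space is flat, base change (the paper cites \cite[Chapter~II, Proposition~2.6(1)]{GPR}) gives flatness of the pullback \,$p_1^\ast\Sss'$\, over \,$\C^L$\,, and the twist by the locally free sheaf \,$\Lss_h$\, does not affect flatness --- your reduction via local triviality of \,$\Lss_h$\, is just a spelled-out version of that last step. The regular-sequence variant via the local flatness criterion is a fine optional alternative, but the base-change route you give as the main line coincides with the paper's proof.
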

\begin{proof}
In the proof, we involve the concepts described in \cite[Chapter II, \S 2]{GPR}. We first note that every vector space over \,$\C$\, is a flat \,$\C$-module by \cite[Chapter II, Proposition~2.1(1)]{GPR} because \,$\C$\ does not contain any non-trivial ideals. The simplest complex space \,$Y$\, is a single point with the sheaf of holomorphic functions equal to \,$\C$\,. Then, \,$\Sss'$\, is a \,$f$-flat \,$\Oss_{X'}$-module for the constant map \,$f: X' \to Y$\,. By \cite[Chapter II, Proposition~2.6(1)]{GPR} (applied with \,$X=X'$\,, \,$Y$\, as above and \,$Z=\C^L$\,, \,$f$\, as above and the constant map \,$g: \C^L\to Y$\,), \,$\Sss'$\, considered as sheaf on \,$X' \times \C^L$\, is flat with respect to the projection \,$X' \times \C^L\to \C^L$\,. Because \,$\Lss_h$\, is locally free on \,$X' \times \C^L$\,, the assertion follows.
\end{proof}
This Lemma shows that $\Sss' \otimes \Lss_h$ is a deformation of the sheaf $\Sss'$ on $X'$. Now, we can use the theory of deformations of sheaves and can control the dependence of the cohomology groups $H^q(X',\Sss'\otimes \Lss_h(t))$ on $t\in\mathbb{C}^L$.
\begin{Theorem}\label{S subvariety}
\,$T$\, is a subvariety of \,$\C^L$\,. 
\end{Theorem}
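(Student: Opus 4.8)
The plan is to realise $T$ as the degeneracy locus of a holomorphic matrix by feeding the flat family of Lemma~\ref{flat sheaf} into Grauert's base change machinery. First I would record the two inputs. By exactly the argument of Lemma~\ref{flat sheaf}, applied verbatim to the generalised divisor $\Sss'_{-q_1-\dotsc-q_n}$ in place of $\Sss'$, the coherent sheaf $\Sss'_{-q_1-\dotsc-q_n}\otimes\Lss_h$ on $X'\times\C^L$ is flat with respect to the projection $p\colon X'\times\C^L\to\C^L$: indeed $\Sss'_{-q_1-\dotsc-q_n}$ is again a generalised divisor on $X'$, hence $f$-flat for the constant map $f\colon X'\to\{\mathrm{pt}\}$ since every $\C$-vector space is a flat $\C$-module, hence flat over $\C^L$ as a sheaf on $X'\times\C^L$ by \cite[Chapter~II, Proposition~2.6(1)]{GPR}, and flatness is preserved under tensoring with the locally free sheaf $\Lss_h$. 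The second input is that $X'$ is compact, so $p$ is proper.

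Next I would fix a point $t_0\in\C^L$ and invoke Grauert's coherence theorem together with the base change theorem, in the form that provides, locally on the base, a finite complex of free modules whose fibrewise cohomology computes the cohomology of the restricted sheaf on the fibres (see \cite{GPR}). This produces an open neighbourhood $V\subset\C^L$ of $t_0$ and a complex $K^0\xrightarrow{\,d\,}K^1\to\cdots$ of free $\Oss_V$-modules of finite rank such that for every $t\in V$
\[
H^0\bigl(X',\,\Sss'_{-q_1-\dotsc-q_n}\otimes\Lss_h(t)\bigr)\;\cong\;\kernel\bigl(d_t\colon K^0_t\to K^1_t\bigr),
\]
where $K^i_t$ denotes the fibre of $K^i$ at $t$ and $d_t$ the induced linear map. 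Writing $r:=\operatorname{rank}K^0$ and representing $d$ by a matrix $M$ with holomorphic entries on $V$ in a choice of frames, the rank–nullity theorem gives $\dim H^0\bigl(X',\Sss'_{-q_1-\dotsc-q_n}\otimes\Lss_h(t)\bigr)=r-\operatorname{rank}M(t)$.

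Finally I would observe that this dimension is non-zero precisely when $\operatorname{rank}M(t)<r$, which happens precisely when all $r\times r$ minors of $M(t)$ vanish. These minors are finitely many holomorphic functions on $V$, so $T\cap V$ is their common zero set and hence an analytic subset of $V$; since $t_0$ was arbitrary and analyticity is a local property, $T$ is an analytic subset of $\C^L$.

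The step I expect to be the main obstacle is the invocation of the analytic cohomology-and-base-change theorem in the generality needed here: the family $p$ is proper but not necessarily projective, the fibre $X'$ may be singular, and the restricted sheaf need not be locally free, so one has to make sure the cited version covers this situation. This is exactly where flatness over $\C^L$, established in Lemma~\ref{flat sheaf}, is essential — it is the hypothesis that makes the base change complex exist and commute with restriction to points — and where compactness of $X'$ enters, through properness of $p$. An alternative packaging would be to cite Grauert's semicontinuity theorem directly in its sharp form, asserting that $\{t\in\C^L\mid\dim H^0(X'_t,\mathcal F_t)\ge 1\}$ is analytic; the argument above is just an unwinding of that statement.
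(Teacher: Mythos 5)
Your proposal is correct, and it rests on the same two inputs as the paper (flatness of \,$\Sss'_{-q_1-\dotsc-q_n}\otimes\Lss_h$\, over \,$\C^L$\, via the argument of Lemma~\ref{flat sheaf}, and properness of the projection because \,$X'$\, is compact), but it takes a genuinely different route to analyticity. The paper never produces a free comparison complex: it observes that \,$H^q$\, of the fibres vanishes for \,$q\geq 2$\, (a Leray cover with empty triple intersections), so base change holds in the top degree \,$q=1$\,, and then identifies \,$T$\, with \,$\supp\bigl(R^1f_*(\Sss'_{-q_1-\dotsc-q_n}\otimes\Lss_h)\bigr)$\,, which is analytic because \,$R^1f_*$\, of a coherent sheaf under a proper map is coherent by Grauert's Direct Image Theorem. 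Note that converting the defining condition \,$\dim H^0\neq 0$\, into the condition \,$\dim H^1\neq 0$\, on each fibre silently uses that \,$\deg\bigl(\Sss'_{-q_1-\dotsc-q_n}\otimes\Lss_h(t)\bigr)=g'-1$\,, i.e.\ vanishing Euler characteristic by Riemann--Roch; the paper's argument is therefore tailored to this particular degree. Your argument via a locally defined bounded complex \,$K^0\to K^1$\, of finite free modules and the vanishing of the maximal minors of the matrix representing \,$K^0\to K^1$\, is in effect a proof of the sharp semicontinuity theorem (analyticity, not just closedness, of the jump loci of \,$h^0$\,), works for a divisor of arbitrary degree, and avoids Riemann--Roch entirely; its cost is that you must invoke the existence of the universal comparison complex for a proper map and a flat coherent sheaf in the analytic category, which is heavier than the statements the paper cites directly (it is what underlies GPR's Theorems~4.1, 4.7 and Corollary~4.8), so you should make sure the reference you use states it for proper, not necessarily projective, families with possibly singular fibres and non-locally-free sheaves --- which is exactly the role of flatness and properness you correctly single out. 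Your ``alternative packaging'' (citing the jump-locus form of semicontinuity) is likewise fine, provided the cited statement gives analytic sets rather than only closed sets; the fact that the paper uses GPR Theorem~4.7(a) only for closedness and then detours through \,$R^1f_*$\, suggests that in that source only closedness is immediate.
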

\begin{proof}
	In this proof, we involve the concepts described in \cite[Chapter III, \S 4.2]{GPR}.
	\,$T$\, is closed by \cite[Chapter III, Theorem~4.7(a)]{GPR}. 
	For $q>1$, the cohomology groups $H^q(X',\Sss'_{-q_1-\dotsc-q_L}\otimes \Lss_h(t))$ are trivial because there exists a Leray covering \,$\Uss$\, of \,$X'$\, such that the intersection of any triple of distinct members of \,$\Uss$\, is empty. Now, \cite[Chapter~III Theorem~4.7~(b)]{GPR} implies that for the sheaf $\Sss'_{-q_1-\dotsc-q_L}\otimes \Lss_h(t)$ and $q\ge 1$, base change holds. Furthermore, \cite[Chapter~III Corollary~4.8]{GPR} implies that \,$R^1f_*\Sss$\, is zero on the open set \,$\C^L\setminus T$\,, with $f:X'\times\mathbb{C}^L\to\mathbb{C}^L$, $f(x,t)=t$ and the sheaf $\Sh{S}=\Sss'_{-q_1-\dotsc-q_L}\otimes \Lss_h$. Therefore, \,$\supp(R^1f_*\Sss)$\, is contained in \,$T$\,.
	
    Conversely, since $X$ is compact, $f$ is proper and the sheaf $R^1f_\ast\Sh{S}$ is coherent on $\mathbb{C}$ due to Grauert's Direct Image Theorem \cite[Chapter~III Theorem~4.1]{GPR}. Therefore, \,$\supp(R^1f_*\Sss)$\, is a subvariety of \,$\C^L$\,. The complement of \,$\supp(R^1f_*\Sss)$\, is contained in the complement of \,$T$\, again by \cite[Chapter~III Corollary~4.8]{GPR}. This implies \,$\supp(R^1f_*\Sss)=T$\,.
\end{proof}
\begin{Definition}
\label{D:BA}
For \,$(c_{j,k})\in \mathrm{GL}(n,\C)$\,, 
a \emph{Baker-Akhiezer function} is a function
	\begin{equation*}
		\psi=(\psi_1,\dotsc,\psi_n):(X'\setminus\{q_1,\dotsc,q_n\})\times(\mathbb{C}^L\setminus T) \to\mathbb{C}^n,
	\end{equation*}
	with the following properties:
\begin{itemize}
	\item[(i)] The map $q\mapsto \psi_j(q,t)$ is a holomorphic section of $\Sss'$ on $ X\setminus\{q_1,\dotsc,q_n\}$ for \,$t\in \C^L\setminus T$\, and \,$j\in \{1,\dotsc,n\}$\,. 
	\item[(ii)] For \,$j,k\in \{1,\dotsc,n\}$\, and \,$t\in \C^L\setminus T$\,, the map
$$ q\mapsto\psi_j(q,t)\cdot z_k^\ast\exp\left(-2\pi i\, \sum_{\ell=1}^L t_{\ell}\, h_{\ell,k} \right) $$
    extends to a holomorphic function on $U_k$ with value \,$c_{j,k}$\, at \,$q_k$\,. 
\end{itemize}
\end{Definition}	
\begin{Theorem}
\label{T:BA}
For every \,$(c_{j,k})\in \mathrm{GL}(n,\C)$\,, there exists one and only one Baker-Akhiezer function \,$\psi$\, and \,$\psi \in H^0(U_0 \times (\C^L\setminus T), \Sss' \otimes \Lss_h)$\,.
\end{Theorem}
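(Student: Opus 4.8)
The plan is to construct the Baker-Akhiezer function explicitly for a fixed parameter $t\in\C^L\setminus T$ and then argue holomorphic dependence on $t$. For fixed $t$, set $\Sss'(t) := \Sss'\otimes\Lss_h(t)$, which is a locally free generalised divisor of degree $g'+n-1$ since $\Lss_h(t)\in\Pic_0(X')$. By the definition of $T$ we have $H^0(X',\Sss'(t)_{-q_1-\dots-q_n})=0$, so \eqref{eq:nonspecial} holds for $\Sss'(t)$, and Lemma~\ref{lem:uniqueness} tells us the evaluation map $H^0(X',\Sss'(t))\to\C^n$, $\varphi\mapsto(\varphi(q_1),\dots,\varphi(q_n))$, is an isomorphism. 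In particular $\dim H^0(X',\Sss'(t))=n$. The point is to reinterpret the two properties in Definition~\ref{D:BA} as saying exactly that the component $\psi_j(\cdot,t)$ is that unique section of $\Sss'(t)$ whose "values at $q_1,\dots,q_n$" (read off via the trivialisation supplied by the cocycle, i.e.\ after multiplying by $z_k^\ast\exp(-2\pi i\sum_\ell t_\ell h_{\ell,k})$) equal $(c_{j,1},\dots,c_{j,n})$.

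First I would make precise the matching between Definition~\ref{D:BA} and sections of $\Sss'(t)$. A holomorphic section $\sigma$ of $\Lss_h(t)$ over $U_0$ is, in the cocycle description from Section~\ref{Se:krichever}, a holomorphic function $\sigma_0$ on $U_0$ together with holomorphic functions $\sigma_k$ on $U_k$ with $\sigma_0 = z_k^\ast\exp(2\pi i\sum_\ell t_\ell h_{\ell,k})\cdot\sigma_k$ on $U_k\setminus\{q_k\}$; hence a holomorphic section of $\Sss'(t)$ on $U_0$ that extends to a global section is the same as a meromorphic section $\varphi$ of $\Sss'$, holomorphic on $U_0$ with the prescribed poles supplied by $\Lss_h(t)$ at $q_1,\dots,q_n$, such that $\varphi\cdot z_k^\ast\exp(-2\pi i\sum_\ell t_\ell h_{\ell,k})$ is holomorphic at $q_k$. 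Thus conditions (i) and (ii) of Definition~\ref{D:BA} say precisely: $\psi_j(\cdot,t)\in H^0(X',\Sss'(t))$ and its evaluation vector equals the $j$-th row $(c_{j,1},\dots,c_{j,n})$ of the matrix $(c_{j,k})$. Since the evaluation map is an isomorphism by Lemma~\ref{lem:uniqueness}, for each $j$ and each $t\notin T$ there exists a unique such $\psi_j(\cdot,t)$, and since $(c_{j,k})\in\mathrm{GL}(n,\C)$ these $n$ rows are a basis — although invertibility is not actually needed for existence and uniqueness of each row, only for the $\psi_j$ to be linearly independent. This settles existence and uniqueness for each fixed $t$.

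Next I would promote this pointwise construction to holomorphy in $t$, i.e.\ show $\psi\in H^0(U_0\times(\C^L\setminus T),\Sss'\otimes\Lss_h)$. By Lemma~\ref{flat sheaf}, $\Sss'\otimes\Lss_h$ is flat over $\C^L$, and by Theorem~\ref{S subvariety}, $T = \supp(R^1 f_\ast(\Sss'_{-q_1-\dots-q_n}\otimes\Lss_h))$, so on $\C^L\setminus T$ the sheaf $R^1 f_\ast(\Sss'_{-q_1-\dots-q_n}\otimes\Lss_h)$ vanishes; by the base change results of \cite[Chapter~III, \S4.2]{GPR} used in the proof of Theorem~\ref{S subvariety}, $R^0 f_\ast(\Sss'\otimes\Lss_h)$ is locally free of rank $n$ on $\C^L\setminus T$ and its fibre at $t$ is $H^0(X',\Sss'(t))$. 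The same coherence argument applied to the evaluation maps at $q_1,\dots,q_n$ gives a morphism of locally free sheaves $R^0 f_\ast(\Sss'\otimes\Lss_h)\to\Oss_{\C^L\setminus T}^{\,n}$ which is a fibrewise isomorphism by Lemma~\ref{lem:uniqueness}, hence an isomorphism of sheaves. Pulling back the constant section given by the $j$-th row of $(c_{j,k})$ through the inverse of this isomorphism yields a section of $R^0 f_\ast(\Sss'\otimes\Lss_h)$ over $\C^L\setminus T$, which is exactly a holomorphic-in-$t$ family of sections $\psi_j(\cdot,t)$; together these assemble to $\psi\in H^0(U_0\times(\C^L\setminus T),\Sss'\otimes\Lss_h)$.

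The main obstacle I anticipate is the bookkeeping in the second and third paragraphs rather than any deep new idea: one must carefully identify the trivialisation of $\Lss_h(t)$ near each $q_k$ with the factor $z_k^\ast\exp(-2\pi i\sum_\ell t_\ell h_{\ell,k})$ appearing in Definition~\ref{D:BA}(ii), keep straight on which open set each function lives, and verify that the coherence/base-change machinery from \cite{GPR} invoked in Theorem~\ref{S subvariety} indeed yields local freeness of $R^0 f_\ast$ and commutation of formation of $H^0$ with base change on $\C^L\setminus T$ (this is where flatness from Lemma~\ref{flat sheaf} and the vanishing of $R^1 f_\ast$ off $T$ enter). Once those identifications are in place, existence and uniqueness are immediate consequences of Lemma~\ref{lem:uniqueness}, and the holomorphy in $t$ follows formally from the sheaf isomorphism $R^0 f_\ast(\Sss'\otimes\Lss_h)\cong\Oss_{\C^L\setminus T}^{\,n}$.
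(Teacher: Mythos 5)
Your proposal is correct, and its first half --- identifying conditions (i), (ii) of Definition~\ref{D:BA} with the statement that \,$\psi_j(\cdot,t)$\, is the unique element of \,$H^0(X',\Sss'\otimes\Lss_h(t))$\, whose evaluation vector, read off through the cocycle trivialisation at \,$q_1,\dotsc,q_n$\,, equals the \,$j$-th row of \,$(c_{j,k})$\,, and then invoking Lemma~\ref{lem:uniqueness} --- is exactly the paper's argument for existence and uniqueness at fixed \,$t\in\C^L\setminus T$\,. Where you genuinely diverge is the holomorphy in \,$t$\,: the paper reduces to \,$c_{j,k}=\delta_{jk}$\,, applies \cite[Chapter~III, Theorem~4.7(d)]{GPR} to the rank-one sheaves \,$\Sss_j=\Sss'_{-q_1-\dotsc-\widehat{q}_j-\dotsc-q_n}\otimes\Lss_h$\, (whose \,$H^0$\, is one-dimensional off \,$T$) and then normalises a local holomorphic section by dividing by the non-vanishing function \,$\varphi_j(q_j)$\,, whereas you work with the full rank-\,$n$\, direct image \,$f_*(\Sss'\otimes\Lss_h)$\,, show the evaluation morphism into \,$\Oss_{\C^L\setminus T}^{\,n}$\, is a fibrewise and hence sheaf isomorphism, and pull back the constant rows of \,$(c_{j,k})$\,. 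Your route is somewhat more direct (no reduction to the identity matrix, no division trick), but it needs a justification you only gesture at: local freeness of \,$f_*(\Sss'\otimes\Lss_h)$\, and base change in degree \,$0$\, do \emph{not} follow from the vanishing off \,$T$\, of \,$R^1f_*$\, of the twisted-down sheaf \,$\Sss'_{-q_1-\dotsc-q_n}\otimes\Lss_h$\, used in Theorem~\ref{S subvariety}; you should either note that \,$\dim H^0(X',\Sss'\otimes\Lss_h(t))=n$\, is constant on \,$\C^L\setminus T$\, by Lemma~\ref{lem:uniqueness} and then quote \cite[Chapter~III, Theorem~4.7(d)]{GPR} for the sheaf \,$\Sss'\otimes\Lss_h$\, itself (as the paper does for \,$\Sss_j$), or observe that \,$H^1(X',\Sss'\otimes\Lss_h(t))=0$\, for \,$t\notin T$\, by the Riemann-Roch Theorem~\ref{T:riemannroch} and Lemma~\ref{lem:serre_generalised}. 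Two small further points: \,$\Sss'\otimes\Lss_h(t)$\, is in general \emph{not} locally free (only \,$\Lss_h(t)$\, is, unless \,$\Sss'$\, happens to be), though nothing in your argument actually uses this; and your observation that invertibility of \,$(c_{j,k})$\, is not needed for existence and uniqueness of the individual components is consistent with the paper's proof, which likewise uses it nowhere.
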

\begin{proof}
An element of $H^0(X',\Lss_h(t))$ is given by holomorphic functions $\varphi_k \in H^0(U_k,\Oss_{U_k})$ for \,$k\in\{0,\dotsc,n\}$\, such that on \,$U_0\cap U_k=U_k\setminus \{q_k\}$,
\begin{equation}
\label{eq:fk-trafo}
\varphi_0\cdot z_k^*\exp\left(-2\pi i \sum\limits_{\ell=1}^L t_\ell h_{\ell,k}\right)=\varphi_k \; . 
\end{equation}
Consequently, for $t\in \C^L\setminus T$, an  element \,$\varphi$\, of $H^0(X',\Sss' \otimes \Lss_h(t))$ is given by functions \,$\varphi_0 \in H^0(U_0,\Sss')$\ and $\varphi_k\in H^0(U_k,\Oss_{U_k})$ for $k\in \{1,\dots, n\}$ such that on \,$U_k \setminus \{q_k\}$\, again \eqref{eq:fk-trafo} holds.

By Lemma~\ref{lem:uniqueness}, for \,$t\in \C^L\setminus T$\,, the map  
\begin{equation*}
H^0(X',\Sss' \otimes \Lss_h(t))\to \C^n, \qquad \varphi\mapsto (\varphi(q_1),\dots, \varphi(q_n)) 
\end{equation*}
is an isomorphism. Therefore, for every \,$j\in \{1,\dotsc,n\}$, there exists a unique element  \,$\varphi \in H^0(X',\Sss' \otimes \Lss_h(t))$\, which is mapped to \,$(c_{j,1},\dotsc,c_{j,n})$\, by the above isomorphism. Then, \,$\psi_j=\varphi_0$\, is the unique function with the properties of the \,$j$-th component of the Baker-Akhiezer function \,$\psi$\, given in Definition~\ref{D:BA}.

It remains to show that \,$\psi$\, is holomorphic in \,$t\in \C^L \setminus T$\,. Because of the uniqueness of the Baker-Akhiezer function, it suffices to show this in the case \,$c_{j,k}=\delta_{jk}$\,. For \,$j\in \{1,\dotsc,n\}$, we define 
$$\Sss_j := \Sss'_{-q_1-\dotsc-\widehat{q}_j-\dotsc-q_n} \otimes \Lss_h
\quad\text{and}\quad
\Sss_j(t) := \Sss'_{-q_1-\dotsc-\widehat{q}_j-\dotsc-q_n} \otimes \Lss_h(t)  
$$
for \,$t\in \C^L \setminus T$\,. 
By Lemma~\ref{lem:uniqueness}, we have 
\,$\dim H^0(X',\Sss_j(t)) = 1$\,. Due to \cite[Chapter~III, Theorem~4.7(d)]{GPR}, \,$f_*\Sss_j$\, is locally free of rank \,$1$\,, where \,$f: X' \times (\C^L\setminus T) \to \C^L\setminus T, \; (x,t) \mapsto t$\,, and for every \,$t_0 \in \C^L \setminus T$, the values of holomorphic sections of \,$f_*\Sss_j$\, at \,$t_0$\, are canonically isomorphic to \,$H^0(X', \Sss_j(t_0)) $\,. In particular, there exists a holomorphic section \,$\varphi$\, of \,$f_* \Sss_j$\, that is normalised at \,$t_0$\,. In order to obtain \,$\psi_j$\,, we normalise \,$\varphi$\,. By definition of \,$\Sss_j$\,, the corresponding \,$\varphi_j$\, as above is holomorphic on \,$U_j \times (\C^L\setminus T)$\,. Evaluation at \,$q_j$\, yields a holomorphic function \,$\varphi_j(q_j)$\, on \,$\C^L\setminus T$\, which is equal to \,$1$\, at \,$t_0$\,. Because \,$\varphi_j(q_j)$\, does not vanish in a neighborhood of \,$t_0$\,, \,$\psi_j=\varphi_0 / \varphi_j(q_j)$\, is holomorphic there.  
\end{proof}
\begin{Remark}\label{R:BA}
\begin{enumerate}
\item
Baker-Akhiezer functions with \,$n=L=1$\, were first constructed by \textsc{Baker} in relation to the study of commuting pairs of ordinary differential operators, see \cite{Baker}. Later on, \textsc{Akhiezer} applied them to the investigation of the spectral theory of ordinary differential equations in \cite{Akhiezer}. Baker-Akhiezer functions with general \,$n$\, and \,$L$\, were first introduced by \textsc{Krichever}, \cite{Krichever}.
\item
If \,$X'=X$\, is smooth and \,$\Sss'$\, is a non-special, positive divisor of degree \,$g+n-1$\, with support in \,$X\setminus \{q_1,\dotsc,q_n\}$\,, one can express the corresponding Baker-Akhiezer function in terms of Riemann's theta function, see \cite[Chapter II, Theorem~2.2]{DKN}. 
\item
We illustrate with an example how to construct a Baker-Akhie\-zer function which solves the heat equation 
\begin{gather}
\label{eq:heatequation}
(\partial_y-\partial_x^2+u(x,y))\psi(x,y)=0
\end{gather}
with some (complex-valued) potential \,$u$\,, see \cite[Chapter~4]{FKT}.

We let an arbitrary compact singular curve \,$X'$\, with arithmetic genus \,$g'$\, and a marked smooth point \,$q_1 \in X'$\, (\,$n=1$\,) be given and choose \,$L=2$\,. Let \,$z$\, be a local coordinate centered at \,$q_1$\, and define
$$ h_1(z) := \frac{1}{z} \quad\text{and}\quad h_2(z) := \frac{2\pi i}{z^2} \; . $$
Further, we suppose that \,$\Sss'$\, is a generalised divisor of degree \,$g'$\, with support away from \,$q_1$\,. 

Let \,$\psi$\, be the Baker-Akhiezer function associated to these data, where \,$t = (x,y) \in \C^2 \setminus T$\,. Then, there exists an analytic function \,$u: \C^2 \setminus T \to \C$\, such that \,$\psi$\, solves \eqref{eq:heatequation}. 
\begin{proof}
With \,$\varphi_0=\psi$\, and \,$\varphi_1$\, as in the proof of Theorem~\ref{T:BA}, we have
$$ \varphi_0 = z^*\exp\bigr(2\pi i (x\cdot h_1 + y\cdot h_2)\bigr) \cdot \varphi_1 $$
and therefore
\begin{align*}
& \hspace{1.2cm}(\partial_y - \partial_x^2)\varphi_0
= z^* \exp\bigr(2\pi i (x\cdot h_1 + y\cdot h_2)\bigr) \\&\hspace{1.1cm}
\cdot \biggr( (2\pi i z^*h_2 + 4\pi^2 z^* h_1^2)\cdot \varphi_1 + (\partial_y - \partial_x^2)\varphi_1 - 4\pi i z^*h_1\cdot \partial_x\varphi_1 \biggr) \\&\hspace{1.1cm}
= z^* \exp\bigr(2\pi i (x\cdot h_1 + y\cdot h_2)\bigr) 
\cdot \biggr( (\partial_y - \partial_x^2)\varphi_1 - 4\pi i z^*h_1\cdot \partial_x\varphi_1 \biggr)
\end{align*}
because of the choice of \,$h_1$\, and \,$h_2$\,. The last factor on the right-hand side is holomorphic on \,$U_1$\, because \,$\partial_x \varphi_1$\, has a zero at \,$q_1$\, due to the normalisation \,$\varphi_1(q_1)=1$\,.  
Now, let \,$u(x,y)$\, denote the value of this function at \,$q_1$\,. \,$(\partial_y -\partial_x^2)\varphi_0$\, has the properties (i) and (ii) of Definition~\ref{D:BA} with the exception of the normalisation. The proof of Theorem~\ref{T:BA} shows that the elements of \,$H^0(X',\Sss' \otimes \Lss_h(x,y))$\, are uniquely determined by their values of the corresponding \,$\varphi_1$\, at \,$q_1$\,. 
Therefore, we have \,$(\partial_y -\partial_x^2)\varphi_0 = u(x,y)\cdot \varphi_0$\,.
\end{proof}
	
\end{enumerate}
\end{Remark}

\end{document}